\newcommand{\RNum}[1]{\uppercase\expandafter{\romannumeral #1\relax}}
\let\hat\widehat
\newcommand{\bg}{\bm{g}}
\newcommand{\bZ}{\bm{Z}}
\newcommand{\m}{\mathcal}
\newcommand{\cH}{\mathbb{H}}
\newcommand{\cX}{\mathcal{X}}
\newcommand{\EE}{\mathbb{E}}
\newcommand{\PP}{\mathbb{P}}
\newcommand{\balpha}{\bm{\alpha}}
\newcommand{\bbeta}{\bm{\beta}}
\newcommand{\argmin}{\mathop{\mathrm{argmin}}}
\newcommand{\mx}{\mbox}
\DeclareMathOperator{\Var}{{\rm Var}}
\newcommand{\bbR}{\mathbb{R}}
\DeclareMathOperator{\tr}{tr}
\theoremstyle{plain}
\numberwithin{equation}{section}
\newtheorem{Theorem}{Theorem}[section]
\newtheorem{Lemma}[Theorem]{Lemma}
\newtheorem{Corollary}[Theorem]{Corollary}
\newtheorem{Assumption}{Assumption}
\newtheorem{lemma}{\indent \bf Lemma}
\newtheoremstyle{mytheoremstyle} %
    {\topsep}                    %
    {\topsep}                    %
    {\normalfont}                   %
    {}                           %
    {\bfseries}                   %
    {.}                          %
    {.5em}                       %
    {}  %
\theoremstyle{mytheoremstyle}
\newcommand{\BlackBox}{\rule{1.5ex}{1.5ex}}  %
\def\QED{~\rule[-1pt]{5pt}{5pt}\par\medskip}
\newenvironment{proof}{\par\noindent{\bf Proof\ }}{\hfill\BlackBox\\[2mm]}
\newtheorem{theorem}{Theorem}
\newtheorem{remark}[theorem]{Remark}
\numberwithin{equation}{section}
\numberwithin{theorem}{section}
\begin{document}

\title{Scalable Statistical Inference in Non-parametric Least Squares} 

\author
{
Meimei Liu\thanks{Department of Statistics,Virginia Tech, Blacksburg, VA. Email: meimeiliu@vt.edu }\, , Zuofeng Shang\thanks{Mathematical Sciences, NJIT,Newark, NJ. Email:zshang@njit.edu} \ and Yun Yang\thanks{Department of Statistics, UIUC, Champaign, IL. Email: yy84@illinois.edu}
}

\date{}
\maketitle

\begin{abstract}
Stochastic approximation (SA) is a powerful and scalable computational method for iteratively estimating the solution of optimization problems in the presence of randomness, particularly well-suited for large-scale and streaming data settings. In this work, we propose a theoretical framework for stochastic approximation (SA) applied to non-parametric least squares in reproducing kernel Hilbert spaces (RKHS), enabling online statistical inference in non-parametric regression models. We achieve this by constructing asymptotically valid pointwise (and simultaneous) confidence intervals (bands) for local (and global) inference of the nonlinear regression function, via employing an online multiplier bootstrap approach to a functional stochastic gradient descent (SGD) algorithm in the RKHS. Our main theoretical contributions consist of a unified framework for characterizing the non-asymptotic behavior of the functional SGD estimator and demonstrating the consistency of the multiplier bootstrap method. The proof techniques involve the development of a higher-order expansion of the functional SGD estimator under the supremum norm metric and the Gaussian approximation of suprema of weighted and non-identically distributed empirical processes. Our theory specifically reveals an interesting relationship between the tuning of step sizes in SGD for estimation and the accuracy of uncertainty quantification.
\end{abstract}

\section{Introduction}
Stochastic approximation (SA)~\cite{robbins1951stochastic,ruppert1988efficient,bottou2008tradeoffs} is a class of iterative stochastic algorithms to solve the stochastic optimization problem
$\min_{\theta\in\Theta}\big\{\m L(\theta):\,= \EE_{Z}[\ell(\theta;Z)]\big\}$,
where $\ell(\theta;z)$ is some loss function, $Z$ denotes the internal random variable, and $\Theta$ is the domain of the loss function. 
Statistical inference, such as parameter estimation, can be viewed as a special case of stochastic optimization where the goal is to estimate the minimizer $\theta^\ast=\argmin_{\theta\in\Theta} \m L(\theta)$ of the expected loss function $\m L(\theta)$ based on a finite number of i.i.d.~observations $\{Z_1,\ldots,Z_n\}$. Classical estimation procedures based on minimizing an empirical version $\m L_n(\theta)=n^{-1}\sum_{i=1}^n \ell(\theta;Z_i)$ of the loss correspond to the sample average approximation (SAA)~\cite{kleywegt2002sample,kim2015guide} for solving the stochastic optimization problem. However, directly minimizing $L_n$ with massive data is computationally wasteful in both time and space, and may pose numerical challenges. For example, in applications involving streaming data where new and dynamic observations are generated on a continuous basis, it may not be necessary or feasible to store all historical data. Instead, stochastic gradient descent (SGD), or Robbins-Monro type SA algorithm~\cite{robbins1951stochastic}, is a scalable approximation algorithm for parameter estimation with constant per-iteration time and space complexity. SGD can be viewed as a stochastic version of the gradient descent method that uses a noisy gradient, such as $\nabla\ell(\cdot\,;Z)$ based on a single $Z$, to replace the true gradient $\nabla \m L(\cdot)$. In this work, we explore the use of SA for statistical inference in infinite-dimensional models where $\Theta$ is a functional space or, more precisely, in solving non-parametric least squares in reproducing kernel Hilbert spaces (RKHS).

Consider the standard random-design non-parametric regression model
\begin{equation}\label{eq:model:0}
Y_i = f^\ast(X_i) + \epsilon_i,  \quad \epsilon_i \sim N(0,\sigma^2) \quad \; \textrm{for} \; i=1,\cdots, n,
\end{equation}
with $X_i\in\cX$ denoting the $i$-th copy of random covariate $X$, $Y_i$ the $i$-th copy of response $Y$, and $f^\ast$ the unknown regression function in a reproducing kernel Hilbert space (RKHS, \cite{aronszajn1950theory,wahba1990spline}) $\mathbb H$ to be estimated. For simplicity, we assume that $\m X=[0,1]^d$ is the unit cube in $\mathbb R^d$.
Since $f^\ast$ minimizes the population-level expected squared error loss objective
$\m L(f) = \mathbb E\big[\ell\big(f;\,(X,Y)\big)\big]$ over all functions $f:\,\m X\to \mathbb R$, with $\ell\big(f;\,(X,Y)\big)=(f(X)-Y)^2$ representing the squared loss function, one can adopt the SSA approach to estimate $f^\ast$ by minimizing a penalized sample-level squared error loss objective. 
Given a sample $\{(X_i,Y_i)\}_{i=1}^n$ of size $n$, a commonly used SAA approach for estimating $f$ is kernel ridge regression (KRR). KRR incorporates a penalty term that depends on the norm $\|\cdot\|_{\mathbb H}$ associated with the RKHS $\mathbb H$. 
Although the KRR estimator enjoys many attractive statistical properties~\cite{koltchinskii2006local, mendelson2002geometric,yang2017}, its computational complexity of $\mathcal{O}(n^3)$ time and $\mathcal{O}(n^2)$ space hinders its practicality in large-scale problems~\cite{saunders1998ridge}. 
In this work, we instead consider an SA-type approach for directly minimizing the functional $\mathcal{E}(f)$ over the infinite-dimensional RKHS. By operating SGD in this non-parametric setting (see Section~\ref{sec:sgd:problem} for details), the resulting algorithm achieves $\mathcal{O}(n^2)$ time complexity and $\mathcal{O}(n)$ space complexity. In a recent study~\cite{Bach2016}, the authors demonstrate that the online estimator of $f$ resulting from the SGD achieves optimal rates of convergence for a variety of $f\in \mathbb H$. It is interesting to note that since the functional gradient is defined with respect to the RKHS norm $\|\cdot\|_{\mathbb H}$, the functional SGD implicitly induces an algorithmic regularization due to the ``early-stopping" in the RKHS, which is controlled by the accumulated step sizes. Therefore, with a proper step size decaying scheme, no explicit regularization is needed to achieve optimal convergence rates.

The aim of this research is to take a step further by constructing a new inferential framework for quantifying the estimation uncertainty in the SA procedure. This will be achieved through the construction of pointwise confidence intervals and simultaneous confidence bands for the functional SGD estimator of $f$. Previous SGD algorithms and their variants, such as those discussed in~\cite{bottou2008tradeoffs,bottou1998online,le2011optimization,bottou2010large,cao2019generalization, cheridito2021non}, are mainly utilized to solve finite-dimensional parametric learning problems with a root-$n$ convergence rate.  In the parametric setting,  asymptotic properties of estimators arising in SGD, such as consistency and asymptotic normality, have been well established in literature; for example, see \cite{Bach2016,P1992,Fang2017,Chen2016}.  
However, the problem of uncertainty quantification for functional SGD estimators in non-parametric settings is rarely addressed in the literature.

In the parametric setting, several methods have been proposed to conduct uncertainty quantification in SGD. \cite{nesterov2008,nemirovski2009} appear to be among the first to formally characterize the magnitudes of random fluctuations in SA; however, their notion of confidence level is based on the large deviation properties of the solution and can be quite conservative. More recently, \cite{Fang2017} proposes applying a multiplier bootstrap method for the construction of SGD confidence intervals, whose asymptotic confidence level is shown to exactly match the nominal level. \cite{Chen2016} proposes a batch mean method to estimate the asymptotic covariance matrix of the estimator based on a single SGD trajectory. Due to the limited information from a single run of SGD, the best achievable error of their confidence interval (in terms of coverage probability) is of the order $\mathcal O(n^{-1/8})$, which is worse than the error of the order $\mathcal O(n^{-1/3})$ achieved by the multiplier bootstrap. 
\cite{su2018uncertainty} proposes a different method called Higrad. Higrad constructs a hierarchical tree of a number of SGD estimators and uses their outputs in the leaves to construct a confidence interval. 

In this work, we develop a multiplier bootstrap method for uncertainty quantification in SA for solving online non-parametric least squares. 
Bootstrap methods~\cite{efron1994introduction, diciccio1988review} are widely used in statistics to estimate the sampling distribution of a statistic for uncertainty quantification. Traditional resampling-based bootstrap methods are unsuitable for streaming data inference as the resampling step necessitates storing all historical data, which contradicts the objective of maintaining constant space and time complexity in online learning. Instead, we extend the parametric online multiplier bootstrap method from \cite{Fang2017} to the non-parametric setting. We achieve this by employing a perturbed stochastic functional gradient, which is represented as an element in the RKHS evaluated upon the arrival of each new covariate-response pair $(X_i,Y_i)$, to capture the stochastic fluctuation arising from the random streaming data.

To theoretically justify the use of the proposed multiplier bootstrap method, we make two main contributions. First, we build a novel theoretical framework to characterize the non-asymptotic behavior of the infinite-dimensional functional SGD estimator via expanding it into higher-orders under the supremum norm metric. This framework enables us to perform local inference to construct pointwise confidence intervals for $f$ and global inference to construct a simultaneous confidence band. Second, we demonstrate the consistency of the multiplier bootstrap method by proving that the perturbation injected into the stochastic functional gradient accurately mimics the randomness pattern in the online estimation procedure, so that the conditional law of the bootstrapped functional SGD estimator given the data asymptotically coincides with the sampling law of the functional SGD estimator. Our proof is non-trivial and contains several major improvements that refine the best (to our knowledge) convergence analysis of SGD for non-parametric least squares in~\cite{Bach2016}, and also advances consistency analysis of the multiplier bootstrap in a non-parametric setting. 
Concretely, in \cite{Bach2016}, the authors derive the convergence rate of the functional SGD estimator relative to the $L_2$ norm metric. Their theory only concerns the $L_2$ convergence rate of the estimation; hence, the proof involves decomposing the SGD recursion into a leading first-order recursion and the remaining higher-order recursions; and bounding their $L_2$ norms respectively by directly bounding their expectations. In comparison, our analysis for statistical inference in online non-parametric regression requires a functional central limit theorem type result and calls for several substantial refinements in proof techniques.

Our first improvement is to refine the SGD recursion analysis by using a stronger supremum norm metric. This enables us to accurately characterize the stochastic fluctuation of the functional estimator uniformly across all locations. As a result, we can study the coverage probability of simultaneous confidence bands in our subsequent inference tasks. Analyzing the supremum convergence is significantly more intricate than analyzing the $L_2$ convergence. In the proof, we introduce an augmented RKHS different from $\mathbb H$ as a bridge in order to better align its induced norm with the supremum metric; see Remark~\ref{remark:3_2} or equation~\eqref{eqn:augmented} in Section~\ref{sec:proof_sketch} for further details. Additionally, we have to employ uniform laws of large numbers and leverage ideas from empirical processes to uniformly control certain stochastic terms that emerge in the expansions. 
Our second improvement comes from the need of characterizing the large-sample distributional limit of the functional SGD estimator. By using the same recursion decomposition, we must now analyze a high-probability supremum norm bound for the all orders of recursions and determine the large-sample distributional limit of the leading term in the expansion. It is worth noting that the second-order recursion is the most complicated and challenging one to analyze. This recursion requires specialized treatment that involves substantially more effort than the remaining higher-order recursions. A loose analysis, achieved by directly converting an $L_2$ norm bound into the supremum norm bound using the reproducing kernel property the original RKHS $\mathbb H$ — which suffices for bounding the higher-order recursions — might result in a bound whose order is comparable to that of the leading term. This is where we introduce an augmented RKHS and directly analyze the supremum norm using empirical process tools.

Last but not least, in order to analyze the distributional limit of the leading bias and variance terms appearing in the expansion of the functional SGD estimator, we develop new tools by extending the recent technique of Gaussian approximation of suprema of empirical processes~\cite{chernozhukov2014gaussian} from equally weighted sum to a weighted sum. This extension is important and unique for analyzing functional SGD, since earlier-arrived data points will have larger weights in the leading bias and variance terms than later-arrived data points; see Remark~\ref{rem:asym_var} for more discussions. Towards the analysis of our bootstrap procedure, we further develop Gaussian approximation bounds for multiplier bootstraps for suprema of weighted and non-identically distributed empirical process, which can be used to control the Kolmogorov distance between the sampling distributions of the pointwise evaluation (local inference) of the functional SGD estimator or its supremum norm (global inference), and their bootstrapping counterparts. Our results also elucidate the interplay between early stopping (controlled by the step size) for optimal estimation and the accuracy of uncertainty quantification.
 
The rest of the article is organized as follows. In Section \ref{sec:background} we introduce the background of RKHS and the functional stochastic gradient descent algorithms in the RKHS; in Section \ref{sec:SGD_inference}, we establish the distributional convergence of SGD for non-parametric least squares; in Section \ref{sec:bootstrap_SGD}, we develop the scalable uncertainty quantification in RKHS via multiplier bootstrapped SGD estimators; Section \ref{sec:numerical} includes extensive numerical studies to demonstrate the performance of the proposed SGD inference. Section \ref{sec:proof_sketch} presents a sketched proof highlighting some important technical details and key steps; Section \ref{sec:dis} provides an overview and future direction for our work. Section \ref{sec:key_proof} includes some key proofs for the theorems. 

\medskip
\noindent {\bf Notation:}
In this paper, we use $C, C', C_1, C_2,\dots$ to denote generic positive constants whose values may change from one line to another, but are independent from everything else. We use the notation $\|f\|_{\infty}$ to denote the supremum norm of a function $f$, defined as $\|f\|_{\infty} = \sup_{x\in \cX} |f(x)|$, where $\cX$ is the domain of $f$. The notations $a \lesssim b$ and $a\gtrsim b$ denote inequalities up to a constant multiple; we write $a\asymp b$ when both $a\lesssim b$ and $a \gtrsim b$ hold. For $k>0$, let $\lfloor k \rfloor$ denote the largest integer smaller than or equal to $k$. For two operators $M$ and $N$, The order $M \preccurlyeq N$ if $N-M$ is positive semi-definite.

\section{Background and Problem Formulation}\label{sec:background}
We begin by introducing some background on reproducing kernel Hilbert space (RKHS) and functional stochastic gradient descent algorithms in the RKHS. 

\subsection{Reproducing kernel Hilbert spaces}\label{sub:rkhs}
To describe the structure of regression function $f$ in non-parametric regression model~\eqref{eq:model:0}, we adopt the standard framework of a reproducing kernel Hilbert space (RKHS,~\cite{wahba1990spline,berlinet2011reproducing,gu2013smoothing}) by assuming $f^\ast=\argmin_{f} \mathbb E\big[(f(X)-Y)^2\big]$ to belong to an RKHS $\mathbb H$. Let $\mathbb P_X$ to denote the marginal distribution of the random design $X$, and $L^2(\mathbb P_X)=\big\{f:\,\mathcal X\to\mathbb R\,\big|\,\int_{\mathcal X} f^2(X)\,\mathbb P_X(dx) <\infty\big\}$ to denote the space of all square-integrable functions over $\mathcal X$ with respect to $\mathbb P_X$.
Briefly speaking, an RKHS is a Hilbert space $\mathbb H\subset L^2(\mathbb P_X)$ of functions defined over a set $\mathcal X$, equipped with inner product $\langle\cdot,\,\cdot\rangle_{\mathbb H}$, so that for any $x\in\mathcal X$, the evaluation functional at $x$ defined by $L_x(f) = f(x)$ is a continuous linear functional on the RKHS. Uniquely associated with $\mathbb H$ is
a positive-definite function $K:\mathcal X\times \mathcal X\to \mathbb R$, called the reproducing kernel. The key property of the reproducing kernel is that it satisfies the reproducing property: the evaluation functional $L_x$ can be represented by the reproducing kernel function $K_x:\,=K(x,\,\cdot)$ so that $f(x) = L_x(f) =\langle K_x,\,f\rangle_{\mathbb H}$. According to Mercer's theorem~\cite{aronszajn1950theory}, kernel function $K$ has the following spectral decomposition: 
\begin{equation}\label{M:decom:K}
K(x,x') = \sum_{j=1}^\infty \mu_j \,\phi_j(x)\,\phi_j(x'),
\,\,\,\,x,x'\in\mathcal{X},
\end{equation}
where the convergence is absolute and uniform on $\mathcal X\times\mathcal X$.
Here, $\mu_1 \geq \mu_2 \geq \cdots \geq 0$ 
is the sequence of eigenvalues, and $\{\phi_{j}\}_{j=1}^{\infty}$ are the corresponding eigenfunctions forming an orthonormal
basis in $L^2(\mathbb P_X)$, with the following property: for any $j,k \in \mathbb{N}$,
$$
\langle \phi_{j}, \phi_{k}\rangle_{L^2(\mathcal P_X)} = \delta_{jk} \quad \mbox{and} \quad \langle \phi_{j}, \phi_{k}\rangle_{\mathbb{H}} = \delta_{jk}/\mu_j,
$$ 
where $\delta_{jk} =1$ if $j=k$ and $\delta_{jk} =0$ otherwise.
Moreover, any $f\in \mathbb H$ can be decomposed into $f=\sum_{j=1}^\infty f_j \phi_j$ with $f_j=\langle f, \phi_j \rangle_{L_2(\mathbb P_X)}$, and its RKHS norm can be computed via $\|f\|_{\mathbb H}^2 = \sum_{j=1}^\infty \mu_j^{-1} f_j^2$.

We introduce some technical conditions on the reproducing kernel $K$ in terms of its spectral decomposition.

\begin{Assumption}\label{asmp:A1}
    The eigenfunctions $\{\phi_k\}_{k=0}^\infty$ of $K$ are uniformly bounded on $\cX$, i.e., there exists a finite constant $c_\phi>0$ such that $\sup_{k\ge 1} \|\phi_k\|_{\infty} \le c_\phi$. Moreover, they satisfy the Lipschitz condition $|\phi_k(s)-\phi_k(t)|\leq L\, k\, |s-t|$
for any $s, t \in[0,1]$, where $L$ is a finite constant. 
\end{Assumption} 

\begin{Assumption}\label{asmp:A2}
The eigenvalues $\{\mu_k\}_{k=1}^\infty$ of $K$ satisfy $\mu_k \asymp k^{-\alpha}$ for some $\alpha> 1$. 
\end{Assumption}

The uniform boundedness condition in Assumption \ref{asmp:A1} is common in the literature \cite{mendelson2010regularization}. Assumption \ref{asmp:A2} assumes the kernel to have polynomially decaying eigenvalues. Assumption \ref{asmp:A1}-\ref{asmp:A2} together also implies the kernel function is bounded as $\sup_x K(x,x)\leq c^2_\phi \sum_{k=1}^\infty k^{-\alpha} := R^2$. 
One special class of kernels satisfying Assumptions \ref{asmp:A1}-\ref{asmp:A2} is composed of translation-invariant kernels $K(t,s)=g(t-s)$ for some even function $g$ of period one. In fact, by utilizing the Fourier series expansion of the kernel function $g$, we observe that the eigenfunctions of the corresponding kernel matrix $K$ are trigonometric functions 
$$
\phi_{2k-1}(x) = \sin (\pi k x), \quad \phi_{2k}(x)= \cos (\pi k x), \quad k=1,2, \dots
$$
on $\mathcal{X}=[0,1]$. It is easy to see that we can choose $c_\phi=1$ and $L = \pi$ to satisfy Assumption~\ref{asmp:A1}. 
Although we primarily consider kernels with eigenvalues that decay polynomially for the sake of clarity in this paper, it is worth mentioning that our theory extends to other kernel classes, such as squared exponential kernels and polynomial kernels~\cite{bach2017equivalence}.

\subsection{Stochastic gradient descent in RKHS}\label{sec:sgd:problem}
To motivate functional SGD in RKHS, we first review SGD in Euclidean setting for minimizing the expected loss function $\m L(\theta) = \mathbb E_Z[\ell(\theta;Z)]$, where $\theta\in \mathbb{R}^d$ is the parameter of interest, $\ell:\,\mathbb R^d \times \mathcal Z\to\mathbb R$ is the loss function and $Z$ denotes a generic random sample, e.g.~$Z=(X,Y)$ in the non-parametric regression setting~\eqref{eq:model:0}. 
By first-order Taylor's expansion, one can locally approximate $\m L(\theta + s)$ for any small deviation $s$ by $\m L(\theta+s) \approx \m L(\theta) +  \langle \nabla \m L(\theta),\, s\rangle$, 
where $\nabla \m L(\theta)$ denotes the gradient (vector) of $\m L(\cdot)$ evaluated at $\theta$. The gradient $\nabla \m L(\theta)$ therefore encodes the (infinitesimal) steepest descent direction of $L$ at $\theta$, leading to the following \emph{gradient decent} (GD) updating formula:
 \begin{align*}
\widehat{\theta}_i =  \widehat{\theta}_{i-1} - \gamma_i \,\nabla L (\widehat{\theta}_{i-1}), \quad\mbox{for}\quad i=1,2,\ldots,
\end{align*}
starting from some initial value $\widehat\theta_0$, where $\gamma_i>0$ is the step size (also called learning rate) at iteration $i$.
GD typically requires the computation of the full gradient $\nabla \m L(\theta)$, which is unavailable due to the unknown data distribution of $Z$. In stochastic approximation, SGD takes a more efficient approach by using an unbiased estimate of the gradient as $G_i(\theta)= \nabla \ell(\theta,Z_i)$ based on one sample $Z_i$ to substitute $\nabla \m L(\theta)$ in the updating rule. 

Accordingly, the SGD updating rule takes the form of
\begin{align*}
\widehat{\theta}_i =  \widehat{\theta}_{i-1} - \gamma_i \,G_i(\widehat{\theta}_{i-1}), \quad\mbox{for}\quad i=1,2,\ldots.
\end{align*}

Let us now extend the concept of SGD from minimizing an expected loss function in Euclidean space to minimizing an expected loss functional in function space. Here for concreteness, we develop SGD for minimizing the expected squared error loss $\m L(f)=\mathbb E\big[(f(X) - Y)^2\big]$ over an RKHS $\mathbb H$ equipped with inner product $\langle \cdot,\cdot\rangle_{\mathbb H}$. Let us begin by extending the concept of the ``gradient". By identifying the gradient (operator) $\nabla L:\mathbb H \to \mathbb H$ of functional $\m L(\cdot)$ as a steepest descent ``direction" in $\mathbb H$ through the following first-order ``Taylor expansion"
\begin{align*}
\m L(f) = \m L(g) + \langle \nabla \m L(g),\, f-g\rangle_{\mathbb H} + \mathcal O\big(\|f-g\|_{\mathbb H}^2\big),\ \ \mbox{as }f\to g,\notag
\end{align*}
we obtain after some simple algebra that 
\begin{align*}
   \langle \nabla \m L(g),\, f-g\rangle_{\mathbb H}  + \mathcal O\big(\|f-g\|_{\mathbb H}^2\big) &\,= \m L(f) - \m L(g) \\
   & \,= \mathbb E\big[\big(f(X) - g(X)\big)\cdot \big(g(X)-Y\big)\big] + \mathbb E\big[(f(X) - g(X))^2\big].
\end{align*}
Now by using the reproducing property $h(x) = \langle h,\, K_x\rangle_{\mathbb H}$ for any $h\in\mathbb H$, we further obtain
\begin{align}\label{eqn:RKHS_grad}
    \langle \nabla \m L(g),\, f-g\rangle_{\mathbb H} = \big\langle \mathbb E\big[\big(g(X) -Y\big)K_X\big],\, f-g\big\rangle_{\mathbb H} + \mathcal O\big(\|f-g\|_{\mathbb H}^2\big).
\end{align}
Here, we have used the fact that by Cauchy-Schwarz inequality, 
$$(f(x)-g(x))^2 = \langle f-g,\, K_x\rangle^2 \leq \|f-g\|_{\mathbb H}^2 \cdot \|K_x\|_{\mathbb H}^2 = K(x,x)\,\|f-g\|_{\mathbb H}^2 = \mathcal O\big(\|f-g\|_{\mathbb H}^2\big),$$
since Assumptions~\ref{asmp:A1}-\ref{asmp:A2} together with Mercer's expansion~\eqref{M:decom:K} imply $K$ to be uniformly bounded, or $K(x,x) \leq c_K\sum_{j=1}^\infty \mu_k \leq C\sum_{j=1}^\infty j^{-\alpha} \leq C'$, as long as $\alpha>1$. From equation~\eqref{eqn:RKHS_grad}, we can identify the gradient $\nabla \m L(g)$ at $g\in\mathbb H$ as 
\begin{align*}
    \nabla \m L(g) = \mathbb E\big[\big(g(X) -Y\big)K_X\big] \in \mathbb H.
\end{align*}
Throughout the rest of the paper, we will refer to above $\nabla \m L(g)$ as the RKHS gradient of functional $L$ at $g$.

Upon the arrival of the $i$th data point $(X_i,Y_i)$, we can form an unbiased estimator  $G_i(g)$ of the RKHS gradient $\nabla \m L(g)$ as $G_i(g) = \big(g(X_i) -Y_i\big)K_{X_i}$. This leads to the following SGD in RKHS for solving non-parametric least squares: for a given initial estimate $\widehat{f}_0$, the SGD recursively updates the estimate of $f$ upon the arrival of each data point as 
\begin{equation}\label{eq:sgd:ini}
\widehat{f}_i = \widehat{f}_{i-1} - \gamma_i \,G_i(\widehat{f}_{i-1}) = \widehat{f}_{i-1} + \gamma_i \big(Y_i - \widehat{f}_{i-1}(X_i)\big) K_{X_i},\quad \mbox{for }i=1,2,\ldots.
\end{equation}
By utilizing the reproducing property, the above iterative updating formula can be rewritten as
\begin{align}
\widehat{f}_i =  \widehat{f}_{i-1} + \gamma_i \,\big(Y_i - \langle \widehat{f}_{i-1}, K_{X_i}\rangle_{\mathbb H} \big)\, K_{X_i} 
 =  (I - \gamma_i\, K_{X_i}\otimes K_{X_i})\, \widehat f_{i-1} + \gamma_i \,Y_i\, K_{X_i}, \label{eq:stand:sgd} 
\end{align}
where $I$ denotes the identity map on $\mathbb H$, and
$\otimes$ is the tensor product operator defined through
$g \otimes h(f)=\langle f, h \rangle_{\mathbb H} \,g $ for all $g,h,f \in \mathbb H$. Formula~\eqref{eq:sgd:ini} is more straightforward to use for practical implementation, while formula~\eqref{eq:stand:sgd} provides a more tractable expression that will facilitate our theoretical analysis.
Following \cite{ruppert1988efficient} and \cite{polyak1992acceleration}, we consider the so-called Polyak averaging scheme to further improve the estimation accuracy by averaging over the entire updating trajectory,
i.e.~we use $\bar{f}_n = n^{-1}\sum_{i=1}^n\, \widehat{f}_i$ as the final functional SGD estimator of $f$ based on a dataset of sample size $n$.
Note that this averaged estimator can be efficiently computed without storing all past estimators by using the
recursively updating formula $\bar{f}_i = (1-i^{-1})\,\bar{f}_{i-1}\, +\, i^{-1}\,\widehat{f}_i$ for $i=1,\dots, n$ on the fly.
We will refer to the above SGD as functional SGD in order to differentiate it from the SGD in Euclidean space, and $\bar f_n$ as the functional SGD estimator (using $n$ samples). Throughout the remainder of the paper, we consider a zero initialization, $\widehat{f}_{0}=0$, without loss of generality.

In functional SGD with total sample size (time horizon) $n$, the only adjustable component is the step size scheme $\{\gamma_i:\,i=1,2,\ldots,n\}$, which is crucial for achieving fast convergence and accurate estimations (c.f.~Remark~\ref{rk:step_size}). 
We examine two common schemes~\cite{bottou2010large,bottou2007tradeoffs}: (1) constant step size scheme where $\gamma_i \equiv \gamma = \gamma(n)$ only depends on the total sample size $n$; (2) non-constant step size scheme where $\gamma_i = i^{-\xi}$ decays polynomially in $i$ for $i=1,2,\ldots,n$ and some $\xi>0$. 
While the constant step scheme is more amenable to theoretical analysis, it suffers from two notable drawbacks: (1) it assumes prior knowledge of the sample size $n$, which is typically unavailable in streaming data scenarios, and (2) the optimal estimation error is only achieved at the $n$-th iteration, leading to suboptimal performance before that time point. In contrast, the non-constant step size scheme, despite significantly complicating our theoretical analysis, overcomes the aforementioned limitations and leads to a truly online algorithm that achieves rate-optimal estimation at any intermediate time point (c.f.~Theorem~\ref{le:bias_variance}). Due to this characteristic, we will also refer to the non-constant step size scheme as the online scheme.

Although functional SGD operates in the infinite-dimensional RKHS, it can be implemented using a finite-dimensional representation enabled by the kernel trick.
Concretely, upon the arrival of the $i$-th observation $(X_i,Y_i)$, we can express the time-$i$ intermediate estimator $\widehat f_i$ as $\widehat{f}_i = \sum_{j=1}^i \widehat{\beta}_{j}\, K_{X_j}$  due to equation~\eqref{eq:sgd:ini} and the zero initialization $\widehat f^\ast=0$ condition, where only the last entry $\widehat \beta_i$ in the coefficient vector $(\widehat{\beta}_{1},\,\widehat{\beta}_{2},\,\dots,\, \widehat{\beta}_{i})^\top$ needs to be updated,
\begin{align*}
    \widehat{\beta}_i= \gamma_i\,\big(Y_i-\widehat{f}_{i-1}(X_i)\big) = \gamma_i \,Y_i - \gamma_i \sum_{j=1}^{i-1} \widehat \beta_j \,K(X_j, \,X_i).
\end{align*}
Note that the computational complexity at time $i$ is $\mathcal O(i)$ for $i=1,2,\ldots,n$. 
Correspondingly, the functional SGD estimator at time $i$ can be computed through $\bar{f}_i = (1-i^{-1})\,\bar{f}_{i-1} + i^{-1} \widehat{f}_i=\sum_{j=1}^i \bar\beta_j\, K_{X_j}$, where (can be proved by induction)
\begin{align*}
    \bar\beta_j = \Big(1 - \frac{j-1}{i}\Big) \,\widehat \beta_j,\quad \mx{for}\quad j=1,2,\ldots,i.
\end{align*}
Consequently, the overall time complexity of the resulting algorithm is $\mathcal O(n^2)$, and the space complexity is $\mathcal O(n)$.

\subsection{Problem formulation}\label{sec:problem_formulation}
Our objective is to develop online inference for the non-parametric regression function $f^\ast$ based on the functional SGD estimator $\bar{f}_n$. Specifically, we aim to construct
level-$\beta$ pointwise confidence intervals (local inference) $CI_n(x;\,\beta) = [U_n(x;\,\beta),\, V_n(x;\,\beta)]$ for $f^\ast(x)$, where $x\in\m X$, and a level-$\beta$ simultaneous confidence band (global inference) $CB_n(\beta) = \big\{g:\, \m X\to\mathbb R \,\big|\, g(x)\in[\bar f_n(x) - b_n(\beta),\,\bar f_n(x) + b_n(\beta)],\ \forall x\in \m X\big\}$ for $f^\ast$. We require these intervals and band to be asymptotically valid, meaning that the coverage probabilities, i.e., the probabilities of $f^\ast(x)$ or $f^\ast$ falling within $CI_n(x;\,\beta)$ or $CB_n(\beta)$ respectively, are close to their nominal level $\beta$. Mathematically, this means $\PP[f^\ast(x)\in CI_n(x;\,\beta)]=\beta + o(1)$ and $\PP[f^\ast\in CB_n(\beta)]=\beta + o(1)$ as $n\to\infty$.

The coverage probability analysis of these intervals and band requires us to examine and prove the distributional convergence of two random quantities (with appropriate rescaling) based on the functional SGD estimator $\bar{f}_n$: the pointwise difference $\bar{f}_n(x) - f^\ast(x)$ for $x\in\mathcal{X}$ and the supremum norm $\|\bar{f}_n - f^\ast\|_{\infty}$ of $\bar{f}_n - f^\ast$. In particular, the appropriate rescaling choice determines a precise convergence rate of $\bar{f}$ towards $f^\ast$ under the supremum norm metric. The characterization of the convergence rate of a non-parametric regression estimator under the supremum norm metric is a challenging and important problem in its own right. We note that the distribution of the supremum norm $\|\bar{f}_n - f^\ast\|_{\infty}$ after a proper re-scaling behaves like the supreme norm of a Gaussian process in the asymptotic sense, which is not practically feasible to estimate. Therefore, for inference purposes, it is not necessary to explicitly characterize this distributional limit; instead, we will prove a bootstrap consistency by showing that the Kolmogorov distance between the sampling distributions of this supremum norm and its bootstrapping counterpart converges to zero as $n\to\infty$. 

In our theoretical development to address these problems, we will utilize a recursive expansion of the functional SGD updating formula to construct a higher-order expansion of $\bar{f}_n$ under the $\|\cdot\|_\infty$ norm metric.  Building upon this expansion, we will establish in Section~\ref{sec:SGD_inference} the distributional convergence of the two aforementioned random quantities and characterize their limiting distributions with an explicit representation of the limiting variance for $\bar{f}_n(x) - f^\ast(x)$ in the large-sample setting. However, these limiting distributions and variances depend on the spectral decomposition of the kernel $K$, the marginal distribution of the design variable $X$, and the unknown noise variance $\sigma^2$, which are either inaccessible or computationally expensive to evaluate in an online learning scenario. To overcome this challenge, we will propose a scalable bootstrap-based inference method in Section \ref{sec:bootstrap_SGD}, enabling efficient online inference for $f^\ast$.

\section{Finite-Sample Analysis of Functional SGD Estimator}\label{sec:SGD_inference}
In this section, we start by deriving a higher-order expansion of $\bar{f}_n$ under the $\|\cdot\|_\infty$ norm metric. We then proceed to establish the distributional convergence of $\bar f(x)-f^\ast(x)$ for any $x\in\mathcal{X}$ by characterizing the leading term in the expansion. These results will be useful for motivating our online local and global inference for $f^\ast$ in the following section.

\subsection{Higher-order expansion under supreme norm}\label{sec:higher-order}
We begin by decomposing the functional SGD update of $\widehat{f}_n-f^\ast$ into two leading recursive formulas and a higher-order remainder term. This decomposition allows us to distinguish between the deterministic term responsible for the estimation bias and the stochastic fluctuation term contributing to the estimation variance. Concretely, we obtain the following by plugging $Y_i=f^\ast(X_i) + \epsilon_i$ into the recursive updating formula~\eqref{eq:stand:sgd}, 
\begin{equation}\label{eq:sgd:recursion_f0}
\widehat{f}_i - f^\ast =  (I - \gamma_i\, K_{X_i}\otimes K_{X_i}) \,(\widehat{f}_{i-1}-f^\ast)  + \gamma_i \,\epsilon_i \,K_{X_i}. 
\end{equation}
Let $\Sigma:\,= \EE[K_{X_1}\otimes K_{X_1}]:\, \mathbb{H} \to \mathbb{H}$ denote the population-level covariance operator, so that for any $f$, $g \in \mathbb{H}$ we have $\langle f, \, \Sigma\, g \rangle_\mathbb{H} = \EE[f(X_1)\,g(X_1)]$.   
Now we recursively define the \emph{leading bias term} through
\begin{equation}\label{eq:bias:lead:main}
\eta_0^{bias,0}= \widehat f_0-f^\ast = -f^\ast \quad \mx{and}\quad  \eta_i^{bias,0}=  (I - \gamma_i\, \Sigma)\, \eta_{i-1}^{bias, 0} \quad \mbox{for} \quad i=1,2,\ldots
\end{equation}
that collects the leading deterministic component in~\eqref{eq:sgd:recursion_f0};
and the {\it leading noise term} through
\begin{align}
\eta_0^{noise,0}= 0 \quad\mx{and}\quad
   \eta_i^{noise,0} =  (I - \gamma_i\,\Sigma)\, \eta^{noise,0}_{i-1} + \gamma_i \,\epsilon_i\, K_{X_i}  \quad \mbox{for} \quad i=1,2,\ldots  \label{eq:lead:noise}
\end{align}
that collects the leading stochastic fluctuation component in~\eqref{eq:sgd:recursion_f0};
so that we have the following decomposition for the recursion:
\begin{equation}\label{eq:sgd:higher_order_exp:1}
\widehat{f}_i -f^\ast =\underbrace{\eta_i^{bias,0}}_\text{leading bias} + \underbrace{\eta_i^{noise,0}}_\text{leading noise} + \ \ \underbrace{\big(\widehat{f}_i -f^\ast -\eta_i^{bias,0} - \eta_i^{noise,0}\big)}_\text{remainder term}  \quad \mbox{for} \quad i=1,2,\ldots. 
\end{equation}
Correspondingly, we define $\bar{\eta}_i^{bias,0} = i^{-1}\sum_{j=1}^i \eta_j^{bias,0}$ and $\bar{\eta}_i^{noise,0} = i^{-1}\sum_{j=1}^i \eta_j^{noise,0}$ as the leading bias and noise terms, respectively, in the functional SGD estimator (after averaging).
The following Theorem~\ref{le:bias_variance} presents finite-sample bounds for the two leading terms and the remainder term associated with $\bar f_n$ under the supreme norm metric. The results indicate that the remainder term is of strictly higher order (in terms of dependence on $n$) compared to the two leading terms, validating the term ``leading" for them. 

\begin{Theorem}[Finite-sample error bound under supreme norm]
\label{le:bias_variance}
Suppose that the kernel $K$ satisfies Assumptions~\ref{asmp:A1}-\ref{asmp:A2}. Assume $f^\ast\in \mathbb{H}$ satisfies $\sum_{\nu=1}^\infty \langle f^\ast, \phi_\nu \rangle_{L_2}\mu_\ell^{-1/2} < \infty$. 
\begin{enumerate}
\item (constant step size) Assume that the step size $\gamma_i\equiv\gamma$ satisfies $\gamma \in(0,\, \mu_1^{-1})$, then we have 
\begin{equation*}
\sup_{x\in \m X}|\bar{\eta}_n^{bias,0}(x)|\leq C \frac{1}{\sqrt{n\gamma}},\quad \textrm{and}\;  
\sup_{x\in \m X} \Var(\bar{\eta}_n^{noise,0}(x))\leq C' \frac{(n\gamma)^{1/\alpha}}{n},
\end{equation*}
where $C,\, C'$ are constants independent of $(n,\gamma)$. 
Furthermore, assume that the step size $0<\gamma < n^{-\frac{2}{2+3\alpha}}$, we have
\begin{equation*}
\PP \Big(\|\bar{f}_n - f^\ast - \bar{\eta}_n^{bias,0}-\bar{\eta}_n^{noise,0}\|^2_{\infty} \geq \gamma^{1/2} (n\gamma)^{-1}+ \gamma^{1/4} (n\gamma)^{1/\alpha}n^{-1}\log n\Big) \leq C/n + C\gamma^{1/4},
\end{equation*}
where the randomness is with respect to the randomness in $\{(X_i, \epsilon_i)\}_{i=1}^n$. 

\item (non-constant step size)  Assume the step size to satisfy $\gamma_i = i^{-\xi}$ for some $\xi\in(0,\, 1/2)$, then we have 
\begin{equation*}
\sup_{x\in \m X}|\bar{\eta}_n^{bias,0}(x)|\leq C \frac{1}{\sqrt{n\gamma_n}},\quad \textrm{and}\;  
\sup_{x\in \m X}\Var(\bar{\eta}_n^{noise,0}(x))\leq C' \frac{(n\gamma_n)^{1/\alpha}}{n},
\end{equation*}
where $C,\,C'$ are constants independent of $(n,\gamma_n)$.
For the special choice of $\xi = \frac{1}{\alpha+1}$, we have 
$$
\PP \Big(\|\bar{f}_n - f^\ast - \bar{\eta}_n^{bias,0}-\bar{\eta}_n^{noise,0}\|^2_{\infty} \geq \gamma_n^{1/2} (n\gamma_n)^{-1}+ \gamma_n^{1/2} (n\gamma_n)^{1/\alpha}n^{-1}\log n\Big) \leq C/n + C\gamma_n^{1/2}. 
$$
\end{enumerate} 
\end{Theorem}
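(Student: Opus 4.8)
The plan is to analyze the three pieces of the decomposition~\eqref{eq:sgd:higher_order_exp:1} separately, working throughout under the supremum norm by passing through an augmented RKHS whose norm dominates $\|\cdot\|_\infty$. First I would handle the leading bias term $\bar\eta_n^{bias,0}$. Since $\eta_i^{bias,0} = \big(\prod_{j=1}^i (I-\gamma_j\Sigma)\big)(-f^\ast)$ is deterministic and diagonalized by the eigenbasis $\{\phi_\nu\}$, its $\nu$-th coordinate is $-\langle f^\ast,\phi_\nu\rangle_{L_2}\prod_{j=1}^i(1-\gamma_j\mu_\nu)$. Averaging and using the uniform boundedness $\|\phi_\nu\|_\infty\le c_\phi$ from Assumption~\ref{asmp:A1}, together with the summability hypothesis $\sum_\nu \langle f^\ast,\phi_\nu\rangle_{L_2}\mu_\nu^{-1/2}<\infty$, one bounds $\sup_x|\bar\eta_n^{bias,0}(x)|$ by $\sum_\nu |\langle f^\ast,\phi_\nu\rangle_{L_2}|\cdot c_\phi\cdot \big|\,i^{-1}\sum_{i}\prod_j(1-\gamma_j\mu_\nu)\big|$. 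The inner average telescopes: $\gamma_\nu^{-1}\cdot$(something) $\lesssim (n\gamma_n\mu_\nu)^{-1}$ for $\mu_\nu$ not too small, and is bounded by $1$ otherwise; splitting the sum at the cutoff frequency and using $\mu_\nu\asymp\nu^{-\alpha}$ produces the claimed $1/\sqrt{n\gamma_n}$ rate (the exponent $-1/2$ is exactly what the $\mu_\nu^{-1/2}$-weighted summability buys). The constant-step and polynomial-step cases differ only in evaluating $\prod_j(1-\gamma_j\mu_\nu)$, for which one uses $\log(1-x)\le -x$ and $\sum_{j=1}^i j^{-\xi}\asymp i^{1-\xi}$.

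Next, the leading noise term $\bar\eta_n^{noise,0}$ is a weighted sum of the independent innovations $\gamma_i\epsilon_i K_{X_i}$: unrolling~\eqref{eq:lead:noise} gives $\eta_i^{noise,0} = \sum_{k=1}^i \big(\prod_{j=k+1}^i(I-\gamma_j\Sigma)\big)\gamma_k\epsilon_k K_{X_k}$, hence after averaging $\bar\eta_n^{noise,0} = \sum_{k=1}^n w_{n,k}\,\gamma_k\epsilon_k K_{X_k}$ with explicit scalar-operator weights $w_{n,k}=n^{-1}\sum_{i=k}^n\prod_{j=k+1}^i(I-\gamma_j\Sigma)$. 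For the pointwise variance I would condition on $\{X_i\}$, use independence of the $\epsilon_i$, and write $\Var(\bar\eta_n^{noise,0}(x))$ as a sum over $k$ and over eigen-indices $\nu$ of $\sigma^2\,(\text{weight})^2\,\phi_\nu(x)^2$; bounding $\phi_\nu(x)^2\le c_\phi^2$ and summing the squared weights $\sum_k\gamma_k^2(\sum_{i\ge k}\prod(1-\gamma_j\mu_\nu))^2\lesssim n\,\mu_\nu^{-2}\wedge(\text{trivial bound})$ over $\nu$ with $\mu_\nu\asymp\nu^{-\alpha}$, the effective number of "active" frequencies is $\asymp(n\gamma_n)^{1/\alpha}$, giving the stated $(n\gamma_n)^{1/\alpha}/n$ bound. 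This is the standard bias–variance trade-off where $n\gamma_n$ plays the role of the inverse regularization parameter; the nonstandard part is only that the weights are non-uniform across $k$, but they are uniformly $O(1)$ so this does not hurt the variance order.

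The genuinely hard part is the remainder bound, and I expect it to be the main obstacle. Writing $r_i := \widehat f_i - f^\ast - \eta_i^{bias,0} - \eta_i^{noise,0}$ and subtracting the defining recursions~\eqref{eq:bias:lead:main}--\eqref{eq:lead:noise} from~\eqref{eq:sgd:recursion_f0}, one gets $r_i = (I-\gamma_i\Sigma)r_{i-1} - \gamma_i(K_{X_i}\otimes K_{X_i}-\Sigma)(\widehat f_{i-1}-f^\ast)$, so $r$ is itself a stochastic recursion driven by the fluctuation operator $\Xi_i := K_{X_i}\otimes K_{X_i}-\Sigma$ acting on the previous error. The strategy is to substitute $\widehat f_{i-1}-f^\ast = \eta_{i-1}^{bias,0}+\eta_{i-1}^{noise,0}+r_{i-1}$ and iterate, which expresses $r$ as a series of multilinear terms in the $\Xi_i$'s applied to the (already controlled) leading terms, plus a self-referential tail in $r$ itself that is absorbed by a Gronwall-type argument using $\|I-\gamma_i\Sigma\|_{op}\le 1$. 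The second-order term (one power of $\Xi$ applied to the leading noise/bias) is the dominant and most delicate one: a crude conversion of an $L_2(\mathbb P_X)$ bound to a sup-norm bound via $|g(x)|\le R\|g\|_{\mathbb H}$ loses too much and would make the remainder comparable to the leading terms. Instead I would introduce the augmented RKHS $\mathbb H_b$ with kernel eigenvalues $\mu_\nu^{1-b}$ for a small $b>0$ (the object flagged in Remark~\ref{remark:3_2} and~\eqref{eqn:augmented}), whose norm still dominates $\|\cdot\|_\infty$ but whose covariance operator has a milder spectrum, and bound the second-order term directly in $\mathbb H_b$ using a uniform law of large numbers / empirical-process maximal inequality to control $\sup$ over the relevant function class of the centered sum $\sum_i\gamma_i(\Xi_i g_i)$. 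The higher-order terms ($\ge 3$ powers of $\Xi$) are then controlled by the cruder $\mathbb H$-to-$\sup$ conversion together with moment bounds on $\|\Xi_i\|_{op}$ and the accumulated step sizes $\sum\gamma_i^2$, each extra power contributing an extra factor that is polynomially small in $n\gamma_n$; summing the geometric-type series and invoking Markov's inequality yields the high-probability bound with failure probability $C/n + C\gamma_n^{1/2}$ (respectively $C/n+C\gamma^{1/4}$), where the $\gamma$-power in the failure probability comes from a truncation/Markov step applied to the second-order empirical-process term, and the condition $\gamma<n^{-2/(2+3\alpha)}$ (resp. $\xi=1/(\alpha+1)$) is exactly what makes the remainder order $\gamma^{1/2}(n\gamma)^{-1}+\gamma^{1/4}(n\gamma)^{1/\alpha}n^{-1}\log n$ strictly smaller than the leading $(n\gamma)^{-1}+(n\gamma)^{1/\alpha}n^{-1}$ scales.
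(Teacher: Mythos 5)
Your overall architecture coincides with the paper's: the leading bias and noise terms are computed exactly as you describe (diagonalization in the eigenbasis together with Assumption~\ref{asmp:A1} and the $\mu_\nu^{-1/2}$-summability for the bias; conditioning on the design and the effective dimension $(n\gamma_n)^{1/\alpha}$ for the variance), the remainder is expanded into the same higher-order ``semi-stochastic'' recursions driven by $\Xi_i=K_{X_i}\otimes K_{X_i}-\Sigma$, the tail of the expansion is negligible because each extra order carries an extra power of $\gamma$ (the paper truncates at a finite order $r$ and bounds the tail in $\|\cdot\|_{\mathbb H}$ rather than invoking a Gronwall absorption, which is a cosmetic difference), and the failure probabilities do originate from Markov-type steps, as you anticipate.

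The gap is in your treatment of the second-order noise term, which you correctly single out as the crux. Bounding it ``directly in $\mathbb H_b$'' --- i.e.\ a second-moment bound on the augmented norm followed by Markov --- cannot deliver the stated rate. The paper's own computation (Lemma~\ref{app:le:noise_rem:con}, part (b)) gives $\EE\|\Sigma^a\bar{\eta}_n^{noise,d}\|_{\mathbb H}^2\lesssim (n\gamma)^{1/\alpha+2\varepsilon}\gamma^{d}\,(n\gamma)^{1/\alpha}n^{-1}$, so the Markov bound for the event $\{\|\bar{\eta}_n^{noise,d}\|_\infty^2\ge\gamma^{1/4}(n\gamma)^{1/\alpha}n^{-1}\}$ is of order $(n\gamma)^{1/\alpha+2\varepsilon}\gamma^{d-1/4}$, which is small only for $d\ge2$; at $d=1$ it is of order $(n\gamma)^{1/\alpha}\gamma^{3/4}$ and diverges for admissible step sizes (e.g.\ $\alpha=2$, $\gamma=n^{-1/3}$). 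The obstruction is structural rather than a loose estimate: $\bar{\eta}_n^{noise,1}$ is spread over $\asymp(n\gamma)^{1/\alpha}$ eigendirections, so its $\|\cdot\|_a$-norm exceeds its supremum norm by roughly the square root of this effective dimension, and no choice of $a$ removes that factor. The paper therefore treats $d=1$ by a different argument: conditionally on $\{X_i\}$, $\bar{\eta}_n^{noise,1}(\cdot)=\frac{\gamma^2}{n}\sum_j\epsilon_j g_j(\cdot)$ is a Gaussian process whose weights $g_j$ depend only on the design; Hoeffding's inequality reduces the problem to the design-dependent variance $\sum_j g_j^2(s)$, which is controlled in expectation by a lengthy moment computation and then by Markov (this is where the $\gamma^{1/2}$, resp.\ $\gamma^{1/4}$, in the failure probability actually comes from), and a pointwise-to-supremum bridging/discretization step costs only the $\log n$ factor. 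The augmented RKHS is used, as in your plan, precisely where a second-moment argument suffices: the bias remainder (Lemma~\ref{app:le:bias_rem:con}) and the noise terms of order $d\ge2$. So unless your ``empirical-process maximal inequality'' is understood as this conditional, sup-norm-level argument rather than a bound on the $\mathbb H_b$-norm, the decisive step of your proof falls short of the claimed remainder order.
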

\noindent A proof of this theorem is based on a higher-order recursion expansion and careful supreme norm analysis of the recursive formula; see Remark \ref{remark:3_2} and proof sketch in Section \ref{sec:proof_sketch}.  The detailed proof is outlined in \cite{liu2023supp}.

\begin{remark}\label{rk:step_size}
As demonstrated in Theorem~\ref{le:bias_variance}, the selection of the step size $\gamma$ (or $\gamma_n$ for non-constant step size) in the SGD estimator entails a trade-off between bias and variance. A larger $\gamma$ (or $\gamma_n$) increases bias while reducing variance, and vice versa. This trade-off can be optimized by choosing the (optimal) step size $\gamma_n = n^{-\frac{1}{\alpha+1}}$. This is why we specifically focus on this particular choice in the non-constant step size setting in the theorem, which also significantly simplifies the proof. Interestingly, the step size (scheme) in the functional SGD plays a similar role as the regularization parameter in regularization-based approaches in preventing overfitting according to Theorem~\ref{le:bias_variance}. To see this, let us consider the classic kernel ridge regression (KRR), where the estimator $\widehat{f}_{n,\lambda}$ is constructed as
$$
\widehat{f}_{n,\lambda} = \argmin_{f\in \mathbb{H}} \Big\{ 
\frac{1}{n}\sum_{i=1}^n \big(Y_i - f(X_i)\big)^2 + \lambda \|f\|_{\mathbb{H}}^2
\Big\},
$$
where $\lambda$ serves as the regularization parameter to avoid overfitting. It can be shown (e.g.,~\cite{yang2017}) that the squared bias of $\widehat{f}_{n,\lambda}$ has an order of $\lambda$, while the variance has an order of $d_\lambda/n$, where $d_\lambda = \sum_{\nu=1}^\infty \min\{1,\lambda \mu_\nu\}$ represents the effective dimension of the model and is of order $\lambda^{-1/\alpha}$ under Assumption~\ref{asmp:A2}. In comparison, the squared bias and variance of the functional SGD estimator $\bar f_n$ are of order $(n\gamma_n)^{-1}$ and $(n\gamma_n)^{1/\alpha} / n$ respectively. Therefore, $(n\gamma_n)^{-1}$ and $(n\gamma_n)^{1/\alpha}$ respectively play the same role as the regularization parameter $\lambda$ and effective dimension $d_\lambda$ in KRR. More generally, a step size scheme $\{\gamma_i\}_{i=1}^n$ corresponds to an effective regularization parameter of the order $\lambda = \big(\sum_{i=1}^n \gamma_i\big)^{-1}$, which in our considered settings is of order $(n\gamma_n)^{-1}$. Note that the accumulated step size $\sum_{i=1}^n \gamma_i$ can be interpreted as the total path length in the functional SGD algorithm.  This total path length determines the early stopping of the algorithm, effectively controlling the complexity of the learned model and preventing overfitting.
\end{remark}

\begin{remark}\label{remark:3_2}
The higher-order recursion expansion and the supreme norm bound in the theorem provide a finer insight into the distributional behavior of $\bar{f}_n$ and paves the way for inference. That is, we only need to focus on the leading noise recursive term for statistical inference.
In our proof of bounding the supremum norm for the remainder term in equation (\ref{eq:sgd:higher_order_exp:1}), we further decompose the remainder $\bar{f}_n - f^\ast - \bar{\eta}_n^{bias,0}-\bar{\eta}_n^{noise,0}$ into two parts:    
the bias remainder and the noise remainder. Note that a loose analysis of bounding the noise remainder under the $\|\cdot\|_{\infty}$ metric by directly converting an $L_2$ norm bound into the supremum norm bound using the reproducing kernel property of the original RKHS $\mathbb H$ would result in a bound whose order is comparable to that of the leading term. This motivates us to introduce an augmented RKHS $\mathbb{H}_a = \{f= \sum_{\nu=1}^\infty f_\nu \phi_\nu \mid \sum_{\nu=1}^\infty f_\nu^2 \mu_\nu^{2a-1}< \infty\}$ with $0\leq a\leq 1/2-1/(2\alpha)$ equipped with the kernel function $K^a(x,y)= \sum_{\nu=1}^\infty \phi_\nu(X)\phi_\nu(y)\mu_\nu^{1-2a}$ and norm $\|f\|_a=\big(\sum_{\nu=1}^\infty f_\nu^2 \mu_\nu^{2a-1}\big)^{1/2}$ for any $f=\sum_{\nu=1}^\infty f_\nu \phi_\nu\in\cH$. This augmented RKHS norm weakens the impact of high-frequency components compared to the norm $\|f\|_{\cH}=\big(\sum_{\nu=1}^\infty f_\nu^2 \mu_\nu^{-1}\big)^{1/2}$
and its induced norm turns out to be better aligns with the functional supremum norm in our context. As a result, we have  $\|f\|_{\infty}\leq c_a \|f\|_{a} \leq c_k \|f\|_{\cH}$ for any $f\in\cH$, where $(c_a,\,c_k)$ are constants. In particular, a supremum norm bound based on controlling the $\|f\|_{a}$ norm with appropriate choice of $a$ could be substantially better than that based on $\|f\|_{\cH}$; see Section \ref{sec:proof_sketch} and Section \ref{app:le:rem_bias:con} for further details.
\end{remark}

As we discussed in Section~\ref{sec:problem_formulation}, for inference purposes, it is not necessary to explicitly characterize distributional convergence limit of the supremum norm $\|\bar{f}_n - f^\ast\|_{\infty}$; instead, we will prove a bootstrap consistency by showing that the Kolmogorov distance between the sampling distributions of this supremum norm and its bootstrapping counterpart converges to zero as $n\to\infty$. However, the pointwise convergence limit of $\bar{f}_n(z_0) - f^\ast(z_0)$ for fixed $z_0\in[0,1]$ has an easy characterization. Therefore, we present the pointwise convergence limit and use it to discuss the impact of online estimation in the non-parametric regression model in the following subsection.

\subsection{Pointwise distributional convergence}
According to Theorem~\ref{le:bias_variance}, the large-sample behavior of the functional SGD estimator $\bar f_n$ is completely determined by the two leading processes: bias term and noise term. 
According to (\ref{eq:bias:lead:main}), under the constant step size $\gamma$,
the leading bias term has an explicit expression as 
\begin{equation}\label{eq:local_bias}
   \begin{aligned}
   \bar{\eta}_n^{bias,0}(x)= & \frac{1}{n}\gamma^{-1}\Sigma^{-1}\,(I-\gamma \Sigma)\, [I-(I-\gamma\Sigma)^n\,]f^\ast(x) \\
   =  & \frac{1}{\sqrt{\gamma}n}\sum_{k=1}^n \sum_{\nu=1}^\infty  \langle f^\ast, \phi_\nu\rangle_{L_2} \mu_\ell^{-1/2} (1-\gamma \mu_\nu)^k (\gamma\mu_\nu)^{1/2}\phi_\nu(x) ,\quad \forall x\in\m X,
\end{aligned} 
\end{equation}
and the leading noise term is 
\begin{equation}\label{eq:local_noise}
\begin{aligned}
    \bar{\eta}_n^{noise,0}(x) =&\, \frac{1}{n}\sum_{k=1}^n \Sigma^{-1}\big[I-(I-\gamma \Sigma)^{n+1-k}\big]\, K(X_k,\,x)\, \epsilon_k \\
    =&\, \frac{1}{n}\sum_{k=1}^n \ \epsilon_k\, \cdot\, \underbrace{\bigg\{ \sum_{\nu=1}^\infty  \big[1-(1-\gamma \mu_\nu)^{n+1-k}\big]\,\phi_\nu(X_k)\,\phi_\nu(x)\bigg\}}_{\Omega_{n,k}(x)},\quad \forall x\in\m X.
\end{aligned}
\end{equation}
For each fixed $z_0\in\m X$, conditioning on the design $\{X_i\}_{i=1}^n$, the leading noise term $\bar{\eta}_n^{noise,0}(z_0)$ is a weighted average of $n$ independent and centered normally distributed random variables. This representation enables us to identify the limiting distribution of $\bar{\eta}_n^{noise,0}(z_0)$ (this subsection) and conduct local inference (i.e.~pointwise confidence intervals) by a bootstrap method (next section). Under Assumption~\ref{asmp:A2}, the weight $\Omega_{n,k}(z_0)$ associated with the $k$-th observation pair $(X_k, \, Y_k)$ is of order 
$\sum_{\nu=1}^\infty  \big[1-(1-\gamma \mu_\nu)^{n+1-k}\big] \asymp \big[(n+1-k)\gamma\big]^{1/\alpha}$, which decreases in $k$. This diminishing impact trend is inherent to online learning, as later observations tend to have a smaller influence compared to earlier observations. This characteristic is radically different from offline estimation settings, where all observations contribute equally to the final estimator, and will change the asymptotic variance (i.e., the $\sigma^2_{z_0}$ in Theorem~\ref{thm:local:main1}).

Furthermore, the entire leading noise process $\bar{\eta}_n^{noise,0}(\cdot)$ can be viewed as a weighted and non-identically distributed empirical process indexed by the spatial location. This characterization enables us to conduct global inference (i.e.~simultaneous confidence band) for non-parametric online learning by borrowing and extending the recent developments~\cite{chernozhukov2014gaussian,chernozhukov2016empirical,chernozhukov2014anti} on Gaussian approximation and multiplier bootstraps for suprema of (equally-weighted and identically distributed) empirical processes, which will be the main focus of next section.

In the following Theorem \ref{thm:local:main1}, we prove, by analyzing the leading noise term $\bar{\eta}_n^{noise,0}$, a finite-sample upper bound on the Kolmogorov distance between the sampling distribution of $\bar{f}_n(z_0)-f^\ast(z_0)$ and the distribution of a standard normal random variable (i.e.~supreme norm between the two cumulative distributions) for any $z_0\in\m X$.  

\begin{Theorem}[Pointwise convergence] \label{thm:local:main1}
Assume that the kernel $K$ to satisfy Assumptions~\ref{asmp:A1}-\ref{asmp:A2}. 
\begin{enumerate}                 
\item (Constant step size) Consider the step size $ \gamma(n) = \gamma$ with $0<  \gamma < n^{-\frac{2}{2+3\alpha}} $. For any fixed $z_0\in[0,1]$, we have 
$$
\sup_{u\in \mathbb R} \Big|\, \PP \Big(\sigma^{-1}_{z_0} \sqrt{n(n\gamma)^{-1/\alpha}}\big(\bar{f}_n (z_0) - f^\ast(z_0) - \bar{\eta}_n^{bias,0}(z_0)\big)\leq u \Big) - \Phi(u)\Big|\leq \frac{C_1}{\sqrt{n(n\gamma)^{-1/\alpha}}} + \kappa_n,
$$
where $\kappa_n =C_2\sqrt{\gamma^{1/2}(n\gamma)^{-1}}+ \sqrt{\gamma^{1/2}(n\gamma)^{1/\alpha}n^{-1}}$. Here, 
the bias term has an explicit expression as given in~\eqref{eq:local_bias}, and the (limiting) variance is
$$\sigma_{z_0}^2=\sigma^2(n\gamma)^{-1/\alpha}n^{-1} \sum_{k=1}^n \sum_{\nu=1}^\infty 
 \big[\big(1-(1-\gamma\mu_\nu)^{n+1-k}\big)^2\big]\, \phi_\nu^2(z_0).$$ 

\item (Non-constant step size) 
Consider the step size $\gamma_i = i^{-\frac{1}{\alpha+1}}$ for $i=1,\dots, n$. For any fixed $z_0\in[0,1]$, we have 
$$
\sup_{u\in \mathbb R} \big| \PP \Big(\sigma^{-1}_{z_0} \sqrt{n(n\gamma_n)^{-1/\alpha}}\big(\bar{f}_n (z_0) - f^\ast(z_0) - \bar{\eta}_n^{bias,0}(z_0)\big)\leq u \Big) - \Phi(u)\big|\leq \frac{C_1}{\sqrt{n(n\gamma_n)^{-1/\alpha}}}. 
$$
Here, the bias term takes an explicit expression as $\bar{\eta}_n^{bias,0}(z_0)= n^{-1}\sum_{k=1}^n 
\prod_{i=1}^k(I-\gamma_i\Sigma)\,f^\ast(z_0)$, and the variance is 
$$\sigma_{z_0}^2=\frac{\sigma^2}{n^2} \sum_{k=1}^n \gamma_k^2 \,\sum_{\nu=1}^\infty \mu_\nu^2\,  \phi_\nu^2(z_0) \Big(\sum_{j=k}^n \prod_{i=k+1}^j (1-\gamma_i\mu_\nu)\Big)^2.$$
\end{enumerate}       
\end{Theorem}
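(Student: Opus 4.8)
The plan is to reduce, via Theorem~\ref{le:bias_variance}, to the leading noise term $\bar{\eta}_n^{noise,0}(z_0)$ and run a Berry--Esseen argument on it, with the bias $\bar{\eta}_n^{bias,0}(z_0)$ treated as a fixed centering and the remainder handled by an anti-concentration step. I would start by writing
\[
\bar f_n(z_0)-f^\ast(z_0)-\bar{\eta}_n^{bias,0}(z_0)\;=\;\bar{\eta}_n^{noise,0}(z_0)\;+\;r_n(z_0),\qquad r_n:=\bar f_n-f^\ast-\bar{\eta}_n^{bias,0}-\bar{\eta}_n^{noise,0},
\]
and noting, from \eqref{eq:local_noise} (and, in the online case, by unrolling \eqref{eq:lead:noise} using $\Sigma\phi_\nu=\mu_\nu\phi_\nu$ to obtain the analogous weighted sum with factors $\prod_{i=k+1}^j(1-\gamma_i\mu_\nu)$), that $\bar{\eta}_n^{noise,0}(z_0)=\frac1n\sum_{k=1}^n\epsilon_k\,\Omega_{n,k}(z_0)$ is, conditionally on $\{X_i\}_{i=1}^n$, a sum of independent centered variables whose $k$-th term has conditional variance proportional to $\Omega_{n,k}^2(z_0)$. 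Because $\{\phi_\nu\}$ is orthonormal in $L^2(\PP_X)$, one has $\EE[\Omega_{n,k}^2(z_0)]=\sum_\nu(1-(1-\gamma\mu_\nu)^{n+1-k})^2\phi_\nu^2(z_0)$, so $\sigma_{z_0}^2$ is precisely the variance of the appropriately rescaled leading noise, i.e. $\Var(W_n)=1$ for $W_n:=\sqrt{n(n\gamma)^{-1/\alpha}}\,\sigma_{z_0}^{-1}\,\bar{\eta}_n^{noise,0}(z_0)$.

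Next I would apply a Berry--Esseen bound for sums of independent, non-identically distributed summands to $W_n$, which gives a Lyapunov-type error $\sup_u|\PP(W_n\le u)-\Phi(u)|\lesssim \sigma_{z_0}^{-3}n^{-3}\bigl(n(n\gamma)^{-1/\alpha}\bigr)^{3/2}\sum_k\EE|\epsilon_k|^3\,\EE|\Omega_{n,k}(z_0)|^3$. The third moment of $\Omega_{n,k}(z_0)$ is controlled by the key estimate $\|\Omega_{n,k}\|_\infty\lesssim ((n+1-k)\gamma)^{1/\alpha}=:m_k$ --- proved by writing $\Omega_{n,k}(\cdot)=\widetilde K_{n,k}(X_k,\cdot)$ for the Mercer kernel $\widetilde K_{n,k}$ with eigenvalues $1-(1-\gamma\mu_\nu)^{n+1-k}\in[0,1]$ and applying Cauchy--Schwarz, $|\Omega_{n,k}(z_0)|\le\sup_x\widetilde K_{n,k}(x,x)\le c_\phi^2\sum_\nu\bigl(1-(1-\gamma\mu_\nu)^{n+1-k}\bigr)\asymp((n+1-k)\gamma)^{1/\alpha}$ under Assumptions~\ref{asmp:A1}--\ref{asmp:A2} --- together with the matching two-sided bound $\EE[\Omega_{n,k}^2(z_0)]\asymp m_k$. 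Hence $\EE|\Omega_{n,k}(z_0)|^3\le\|\Omega_{n,k}\|_\infty\,\EE[\Omega_{n,k}^2(z_0)]\lesssim m_k^2$, and combining $\sum_k m_k^2\asymp n(n\gamma)^{2/\alpha}$ with $\Var(\bar{\eta}_n^{noise,0}(z_0))\asymp(n\gamma)^{1/\alpha}/n$ and $\sigma_{z_0}^2\asymp1$ collapses the ratio to $\asymp(n\gamma)^{1/(2\alpha)}/\sqrt n=1/\sqrt{n(n\gamma)^{-1/\alpha}}$, which is the stated $C_1$-term. (Since the $\epsilon_i$ are Gaussian, one may equivalently observe that $\bar{\eta}_n^{noise,0}(z_0)\mid\{X_i\}$ is exactly $N\bigl(0,\tfrac{\sigma^2}{n^2}\sum_k\Omega_{n,k}^2(z_0)\bigr)$ and only prove that this conditional variance concentrates around $\sigma_{z_0}^2/(n(n\gamma)^{-1/\alpha})$ with relative fluctuation $O(1/\sqrt{n(n\gamma)^{-1/\alpha}})$, using $\EE[\Omega_{n,k}^4(z_0)]\le\|\Omega_{n,k}\|_\infty^2\,\EE[\Omega_{n,k}^2(z_0)]\lesssim m_k^3$ and $\sum_k m_k^3\asymp n(n\gamma)^{3/\alpha}$; this reaches the same rate.)

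For the remaining pieces: the bias needs nothing further since it is retained in the centering (its closed form is \eqref{eq:bias:lead:main}/\eqref{eq:local_bias}), and the remainder is absorbed by an anti-concentration step --- writing $R_n:=\sqrt{n(n\gamma)^{-1/\alpha}}\,\sigma_{z_0}^{-1}r_n(z_0)$,
\[
\sup_u\bigl|\PP(W_n+R_n\le u)-\PP(W_n\le u)\bigr|\;\lesssim\;\sup_u\bigl|\PP(W_n\le u)-\Phi(u)\bigr|\;+\;\inf_{t>0}\bigl\{\,t+\PP(|R_n|>t)\,\bigr\},
\]
and optimizing $t$ against the high-probability remainder bound of Theorem~\ref{le:bias_variance} (using $|r_n(z_0)|\le\|r_n\|_\infty$): in the constant-step regime $\gamma<n^{-2/(2+3\alpha)}$ this produces exactly the $\kappa_n$ term, the $O(1/n+\gamma^{1/4})$ failure probability being of the same or smaller order, while in the online regime a sharper pointwise (mean-square) remainder bound --- obtained from the same recursion expansion but reverting to $\|\cdot\|_{\mathbb{H}}$/$L^2$ control in the spirit of \cite{Bach2016} --- keeps the remainder contribution below $1/\sqrt{n(n\gamma_n)^{-1/\alpha}}$. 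A final triangle inequality then delivers both parts; the non-constant statement is structurally identical once $(1-\gamma\mu_\nu)^{n+1-k}$ is replaced by $\prod_{i=k+1}^j(1-\gamma_i\mu_\nu)$ and the accumulated step size $\sum_{i\le n}\gamma_i\asymp n\gamma_n$ takes over the role of $n\gamma$.

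The step I expect to be the main obstacle is the moment control of the weighted increments $\Omega_{n,k}(z_0)$: establishing $\|\Omega_{n,k}\|_\infty\asymp\EE[\Omega_{n,k}^2(z_0)]\asymp((n+1-k)\gamma)^{1/\alpha}$ with the correct ``effective-dimension'' exponent driven by the eigenvalue decay, and then carrying the analogous two-sided estimates through the time-inhomogeneous products $\prod_{i=k+1}^j(1-\gamma_i\mu_\nu)$ in the online scheme, where the geometric-series shortcuts available for a constant step size no longer apply. This is exactly where Assumptions~\ref{asmp:A1}--\ref{asmp:A2} enter decisively, and it is what forces the Berry--Esseen rate to be $1/\sqrt{n(n\gamma)^{-1/\alpha}}$ rather than the parametric $n^{-1/2}$; once these estimates are in hand and Theorem~\ref{le:bias_variance} is granted, the Berry--Esseen invocation, the anti-concentration bound and the closing triangle inequalities are routine.
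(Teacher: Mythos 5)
Your route is the paper's route: reduce to the leading noise term via Theorem~\ref{le:bias_variance}, exploit that $\bar{\eta}_n^{noise,0}(z_0)=n^{-1}\sum_k\epsilon_k\Omega_{n,k}(z_0)$ is (conditionally on the design, in fact exactly) a weighted sum of independent centered Gaussians, identify $\sigma_{z_0}^2$ through the $L^2(\PP_X)$-orthonormality of the $\phi_\nu$'s, control the weights by $\|\Omega_{n,k}\|_\infty\lesssim((n+1-k)\gamma)^{1/\alpha}$, and absorb the remainder by a normal anti-concentration step. The Lyapunov/conditional-variance computation collapsing the Berry--Esseen ratio to $1/\sqrt{n(n\gamma)^{-1/\alpha}}$ is correct and is exactly where the stated $C_1$-term comes from; the unrolled product expression and variance formula in the non-constant case also match the theorem. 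One caveat you gloss over (as does the paper): the two-sided claim $\EE[\Omega_{n,k}^2(z_0)]\asymp((n+1-k)\gamma)^{1/\alpha}$, equivalently $\sigma_{z_0}\asymp 1$, needs a lower bound at the \emph{fixed} $z_0$ that Assumptions~\ref{asmp:A1}--\ref{asmp:A2} alone do not give (eigenfunctions can vanish at $z_0$); only the upper bound is free.

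The genuine gap is in the remainder-absorption step. With $R_n=\sigma_{z_0}^{-1}\sqrt{n(n\gamma)^{-1/\alpha}}\,r_n(z_0)$ and the only available control being the sup-norm bound of Theorem~\ref{le:bias_variance}, the optimized $\inf_t\{t+\PP(|R_n|>t)\}$ is \emph{not} ``exactly $\kappa_n$ with the $O(1/n+\gamma^{1/4})$ failure probability of the same or smaller order.'' First, the deviation threshold must be rescaled by $\sqrt{n(n\gamma)^{-1/\alpha}}$, which turns the second remainder component into a term of order $\gamma^{1/8}\sqrt{\log n}$ (constant step) rather than the unrescaled $\sqrt{\gamma^{1/2}(n\gamma)^{1/\alpha}n^{-1}}$ appearing in $\kappa_n$; second, the failure probability $C\gamma^{1/4}$ enters additively in the Kolmogorov distance and dominates the claimed bound for admissible step sizes (e.g.\ $\alpha=2$, $\gamma=n^{-0.3}$ gives $\gamma^{1/4}=n^{-0.075}$ while $C_1/\sqrt{n(n\gamma)^{-1/\alpha}}+\kappa_n\asymp n^{-0.325}$). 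The same issue is sharper in the non-constant case, where part~2 of the theorem carries no remainder term at all: rescaling the Theorem~\ref{le:bias_variance} bound yields an $R_n$ of order $\gamma_n^{1/4}\sqrt{\log n}$, far above $1/\sqrt{n(n\gamma_n)^{-1/\alpha}}$. So to land the stated inequalities you need a genuinely sharper \emph{pointwise, in-probability (or mean-square)} bound on $r_n(z_0)$ than the uniform one you invoke --- you gesture at such a bound for the online regime but do not supply it, and you would need it (or at least a correct bookkeeping of the rescaling and failure probability) in the constant-step regime as well.
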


Theorem \ref{thm:local:main1} establishes that the sampling distribution of $\bar f_n-f^*$ at any fixed $z_0$ can be approximated by a normal distribution $N(\bar{\eta}_n^{bias,0}(z_0), n^{-1}(n\gamma_n)^{1/\alpha}\sigma^2_{z_0})$. According to Theorem \ref{le:bias_variance}, the bias $\bar{\eta}_n^{bias,0}(z_0)$ has the order of $(n\gamma_n)^{-1/2}$ while the variance has the order of $n^{-1}(n\gamma_n)^{1/\alpha}$; Theorem \ref{thm:local:main1} also implies that  the minimax convergence rate $n^{-\frac{1}{2(\alpha+1)}}$ of estimating $f^\ast$ can be achieved with $\gamma=\gamma_n = n^{-\frac{1}{\alpha+1}}$, which attains an optimal bias-variance tradeoff. In practice, the bias term can be suppressed by applying a undersmoothing technique; see Remark~\ref{rem:undersmooth} for details.

\begin{remark} \label{rem:asym_var} From the theorem, we see that the (limiting) variance $\sigma_{z_0}^2$ is precisely the variance of the scaled leading noise $\sqrt{n(n\gamma_n)^{-1/\alpha}}\bar{\eta}_n^{noise,0}(z_0)$ at $z_0$, that is,  $\Var\big(\sqrt{n(n\gamma_n)^{-1/\alpha}}\bar{\eta}_n^{noise,0}(z_0)\big)$; and $\sigma^2_{z_0}$  has the same $\mathcal O(1)$ order for both the constant and non-constant cases. The contribution of for each data point to the variance differs between the constant and non-constant step size cases.  Concretely, in the constant step size case,  let ${\bf{C}}=(c_1, \dots, c_n)$ be the vector of variation, where $c_k$ $(k=1,\dots, n)$ represents the contribution to $\sigma^2_{z_0}$ from the $k$-th arrival observation $(X_k, Y_k)$. According to equation (\ref{eq:local_noise}), $c_k = \EE \Omega^2_{n,k}(z_0)\asymp (n\gamma)^{-1/\alpha}n^{-1} \big((n+1-k)\gamma\big)^{1/\alpha}$ and is of order $(n-k)^{1/\alpha}$ in the observation index $k$, which decreases monotonically to nearly $0$ as $k$ grows to $n$. 
In comparison, in the online (nonconstant) step case, we denote ${\bf{O}}=(o_1,\dots, o_k)$ as the vector of variation  with $o_k$ being the contribution from the $k$-th observation.  A careful calculation shows that $o_k= n^{-2}\gamma^2_k\sum_{\nu=1}^\infty \mu_\nu^2 \, \phi_\nu^2(z_0) \Big(\sum_{j=k}^n \prod_{i=k+1}^j (1-\gamma_i\mu_\nu)\Big)^2$, which has order $n^{-2}\gamma^2_k \gamma^{-2}_{n+1-k} \big((n+1-k)\gamma_{n+1-k}\big)^{1/\alpha} + \big((n+1-k)\gamma_k\big)^{1/\alpha}$ and decreases slower than the constant step size case. This means that the nonconstant step scheme yields a more balanced weighted average over the entire dataset, which tends to lead to a smaller asymptotic variance. 

Figure \ref{fig:var_comp} compares the individual variation contribution for both the constant and non-constant step cases. We keep the total step size budget the same for both cases (which also makes the two leading bias terms roughly equal); that is, we choose constant $B$ in the nonconstant step size $\gamma_i= B \cdot i^{-\frac{1}{\alpha+1}}$ so that $n\gamma = \sum_{i=1}^n \gamma_i$ with $\gamma = n^{-\frac{1}{\alpha+1}}$ being the constant step size. The data index $k$ is plotted on the $x$ axis of Figure \ref{fig:var_comp} (A), with the variation contribution summarized by the $y$ axis. As we can see, the variation contribution from each observation decreases as observations arrive later in both cases. However, the pattern is flatter in the non-constant step case. Figure \ref{fig:var_comp} (B) is a violin plot visualizing the distributions of the components in $\bf{C}$ and $\bf{O}$.
Specifically, the variation among $\{o_k\}_{k=1}^n$ (depicted by the short blue interval) is smaller in the non-constant case, suggesting reduced fluctuation in individual variation for this setting. As detailed in Section \ref{sec:numerical}, our numerical analysis further confirms that using a nonconstant learning rate outperforms that using a constant learning rate (e.g., Figure \ref{fig:sim1}). An interesting direction for future research might be to identify an optimal learning rate decaying scheme by minimizing the variance $\sigma^2_{z_0}$ as a function of $\{\gamma_i\}_{i=1}^n$. It is also interesting to determine whether this scheme results in an equal contribution from each observation. However, this is beyond the scope of this paper.
\end{remark}

\begin{figure}
\centering
\includegraphics[width=0.9\textwidth]{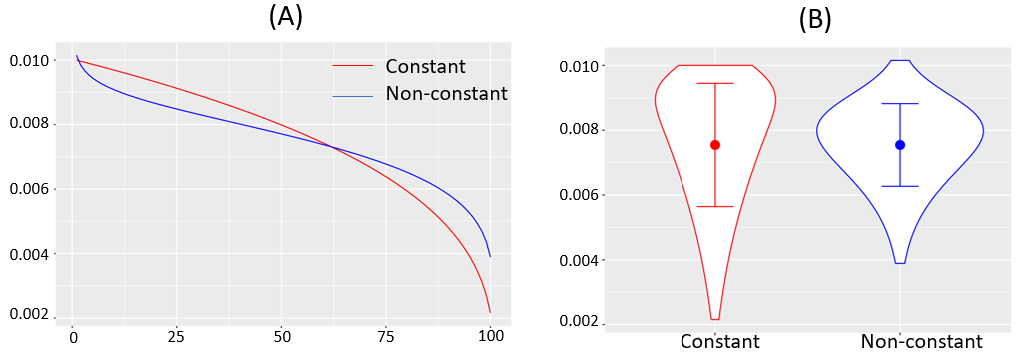} 
\caption{Compare the individual variation contribution for each observation in two cases: constant step size case (red curve) and non-constant step size case (blue curve). In (A), $x$-axis is the  observation index, $y$-axis is the variance contributed by the $t$-th observation. (B) is the violin plot of individual variance contribution for two cases; the solid dots represent mean while the intervals represent variance. }
  \label{fig:var_comp}
\end{figure}

\begin{remark}
The Kolmogorov distance bound between the sampling distribution of $\sigma^{-1}_{z_0} \sqrt{n(n\gamma)}\big(\bar{f}_n(z_0)-f^\ast(z_0)\big)$ and the standard normal distribution depends on the step size $\gamma_n$ and sample size $n$. In particular, $\kappa_n$ is the remainder bound stated in Theorem~\ref{le:bias_variance}, which is negligible compared to $\frac{C_1}{\sqrt{n(n\gamma)^{-1/\alpha}}}$ when $\gamma > n^{-\frac{2}{\alpha+2}}$ in the constant step size case.  Consequently, a smaller $\gamma$  or larger sample size $n$ leads to a smaller Kolmogorov distance. 
The same conclusion also applies to the non-constant step size case if we choose $\gamma_i= i^{-\frac{1}{\alpha+1}}$.
\end{remark}

Although Theorem \ref{thm:local:main1} explicitly characterizes the distribution of the SGD estimator, the expression of the standard deviation $\sigma_{z_0}$ depends on the eigenvalues and eigenfunctions of $\mathbb{H}$, the underlying distribution of the design $X$, and the unknown noise variance $\sigma^2$, which are typically unknown in practice. One approach is to use plug-in estimators for these unknown quantities, such as empirical eigenvalues and eigenfunctions obtained through SVD decomposition of the empirical kernel matrix $\mathbf{K}\in \mathbb{R}^{n\times n}$, whose $ij$-th element is $\mathbf{K}_{ij}= K(X_i, X_j)$. However, computing these plug-in estimators requires access to all observed data points  $\{(X_i,\,Y_i)\}_{i=1}^n$ and has a computational complexity of $\mathcal O(n^3)$, which undermines the sequential updating advantages of SGD. In the following section, we develop a scalable inference framework that uses multiplier-type bootstraps to generate randomly perturbed SGD estimators upon arrival of each observation.  This approach enables us to bypass the evaluation of $\sigma_{z_0}$ when constructing confidence intervals.

\section{Online Statistical Inference via Multiplier Bootstrap}\label{sec:bootstrap_SGD} 
In this section, we first propose a multiplier bootstrap method for inference based on the functional SGD estimator. After that, we study the theoretical properties of the proposed method, which serve as the cornerstone for proving bootstrap consistency for the local inference of constructing pointwise confidence intervals and the global inference of constructing simultaneous confidence bands. Finally, we describe the resulting online inference algorithm for non-parametric regression based on the functional SGD estimator.

\subsection{Multiplier bootstrap for functional SGD}\label{sec:MBootstrap}
Recall that Theorem~\ref{le:bias_variance} provides a high probability decomposition of the functional SGD estimator $\bar f_n$ (relative to supreme norm metric) into the following sum
\begin{align*}
    \bar f_n \ =\  f^\ast \ +\  \bar{\eta}_n^{bias,0}\ + 
 \ \bar{\eta}_n^{noise,0} \ +\ \mx{smaller remainder term},
\end{align*}
where $\bar{\eta}_n^{bias,0}$ is the leading bias process defined in equation~\eqref{eq:local_bias} and $\bar{\eta}_n^{noise,0}$ is the leading noise process defined in equation~\eqref{eq:local_noise}. Motivated by this result, we propose in this section a multiplier bootstrap method to mimic and capture the random fluctuation from this leading noise process
$\bar{\eta}_n^{noise,0}(\cdot) = n^{-1}\sum_{k=1}^n  \epsilon_k \cdot \Omega_{n,k}(\cdot)$,
where recall that term $\Omega_{n,k}$ only depends on the $k$-th design point $X_k$, and the primary source of randomness in $\bar{\eta}_n^{noise,0}$ is coming from random noises $\{\epsilon_k\}_{k=1}^n$ that are i.i.d.~normally distributed under a standard non-parametric regression setting.

Our online inference approach is inspired by the multiplier bootstrap idea proposed in~\cite{Fang2017} for online inference of parametric models using SGD. Remarkably, we demonstrate that their development can be naturally adapted to enable online inference of non-parametric models based on functional SGD.
The key idea is to perturb the stochastic gradient in the functional SGD by incorporating a random multiplier upon the arrival of each data point. 
r Specifically, let $w_1$, $w_2$, $\ldots$ denote a sequence of i.i.d.~random bootstrap multipliers, whose mean and variance are both equal to one. At time $i$ with the observed data point $(X_i,\, Y_i)$, we use a randomly perturbed functional SGD updating formula as:
\begin{equation}\label{eq:bootstrap:sgd}
   \begin{aligned}
\widehat{f}^b_i = &\, \widehat{f}_{i-1}^b + \gamma_i\, w_{i}\, G_i(\widehat{f}^b_{i-1}) =
\widehat{f}_{i-1}^b + \gamma_i \,w_{i}\, (Y_i - \langle \widehat{f}^b_{i-1}, \,K_{X_i}\rangle_{\mathbb{H}})\, K_{X_i}, \\
  = & \, (I - \gamma_i\, w_{i}\,  K_{X_i}\otimes K_{X_i}) \,\widehat{f}^b_{i-1} + \gamma_i\, w_{i} \,Y_i \,K_{X_i}\quad \mx{for}\quad i=1,2,\ldots,
\end{aligned} 
\end{equation}
which modifies equations~\eqref{eq:sgd:ini} and~\eqref{eq:stand:sgd} for functional SGD by multiplying the stochastic gradient $G_i(\widehat{f}^b_{i-1})$ with random multiplier $w_i$. We adopt the same zero initialization $\widehat{f}^b_0 = \widehat{f}_0=0$ and call the (Polyak) averaged estimator $\bar{f}_n^b = n^{-1}\sum_{i=1}^n \widehat{f}_i^b$ as the bootstrapped functional SGD estimator (with $n$ samples).

\subsection{Bootstrap consistency}\label{sec:bs-consistency}
Let us now proceed to derive a higher-order expansion of $\bar{f}_n^b$ analogous to Section~\ref{sec:higher-order} and compare its leading terms with those associated with the original functional SGD estimator $\bar f_n$. 
Utilizing equation~\eqref{eq:bootstrap:sgd} and plugging in $Y_i=f^\ast(X_i)+\epsilon_i$, we obtain the following expression:
$$ 
\widehat{f}^b_i - f^\ast =
(I - \gamma_i\, w_{i} \, K_{X_i}\otimes K_{X_i}) \,(\widehat{f}^b_{i-1}-f^\ast) + \gamma_i \,w_{i}\, \epsilon_i \,K_{X_i}.
$$ 
Since $w_i$ has a unit mean, we have an important identity $\Sigma =\EE (w_nK_{X_n}\otimes K_{X_n})$. Similar to equation \eqref{eq:sgd:recursion_f0}-\eqref{eq:sgd:higher_order_exp:1}, due to this key identity, we can still recursively define the leading bootstrapped bias term through
\begin{equation}\label{eq:bootstrap:bias:lead}
\eta_0^{b,bias,0}=\widehat f_0^{b} -f^\ast = -f^\ast \quad \mx{and}\quad  \eta_i^{b,bias,0}=  (I - \gamma_n\, \Sigma) \,\eta_{n-1}^{bias, 0} \quad\mx{for}\quad i=1,2,\ldots,
\end{equation}
which coincides with the original leading bias term, i.e.~${\eta}_i^{b,bias,0}\equiv {\eta}_i^{bias,0}$;
and the leading bootstrapped noise term through
\begin{align*}
\eta_0^{b,noise,0}= 0 \quad \mx{and}\quad
   \eta_i^{b,noise,0} =  (I - \gamma_i\,\Sigma) \eta^{b,noise,0}_{i-1} + \gamma_i \,w_i\,\epsilon_i\, K_{X_i},  \quad\mx{for}\quad i=1,2,\ldots,
\end{align*}
so that a similar decomposition as in equation~\eqref{eq:sgd:higher_order_exp:1} holds,
\begin{equation}\label{eq:sgd:higher_order_exp}
\widehat{f}^b_i -f^\ast =\ \underbrace{\eta_i^{b,bias,0}}_\text{leading bias} \ +\  \underbrace{\eta_i^{b,noise,0}}_\text{leading noise} \ + \ \ \underbrace{\big(\widehat{f}^b_i -f^\ast -\eta_i^{b,bias,0} - \eta_i^{b,noise,0}\big)}_\text{remainder term}  \quad \mbox{for} \quad i=1,2,\ldots. 
\end{equation}
Corresponding, we define $\bar{\eta}_i^{b,bias,0} = i^{-1}\sum_{j=1}^i\eta_j^{b,bias,0}$ and $\bar{\eta}_i^{b,noise,0} = i^{-1}\sum_{j=1}^i\eta_j^{b,noise,0}$ as the leading bootstrapped bias and noise terms, respectively, in the bootstrapped functional SGD estimator. 

Notice that $\bar{\eta}_i^{b,bias,0}$ also coincides with the original leading bias term $\bar{\eta}_i^{bias,0}$, i.e.~$\bar{\eta}_i^{b,bias,0}\equiv \bar{\eta}_i^{bias,0}$.
Therefore, $\bar{\eta}_i^{b,bias,0}$ has the same explicit expression as equation~\eqref{eq:local_bias}; while the leading bootstrapped noise term $\bar{\eta}_i^{b,noise,0}$ has a slightly different expression that incorporates the bootstrap multipliers as
\begin{equation}\label{eq:local_noise_bs}
\begin{aligned}
    \bar{\eta}_n^{b,noise,0}(x) =&\, \frac{1}{n}\sum_{k=1}^n w_k\cdot \epsilon_k \cdot \Omega_{n,k}(x),\quad \forall x\in\m X,
\end{aligned}
\end{equation}
where recall that $\Omega_{n,k}(\cdot)$ is defined in equation~\eqref{eq:local_noise} and only depends on $X_k$. 
By taking the difference between $\bar{\eta}_n^{b,noise,0}$ and $\bar{\eta}_n^{noise,0}$, we obtain
\begin{align}\label{eq:boot:noise}
    \bar{\eta}_n^{b,noise,0}(x) - \bar{\eta}_n^{noise,0}(x) = \frac{1}{n}\sum_{k=1}^n (w_k-1)\cdot \epsilon_k \cdot \Omega_{n,k}(x),\quad \forall x\in\m X.
\end{align}
This expression also takes the form of a weighted and non-identically distributed empirical process with ``effective" noises $\big\{(w_i-1)\epsilon_i\big\}_{i=1}^n$. Since $w_i$ has unit mean and variance, these effective noises have the same first two-order moments as the original noises $\{\epsilon_i\}_{i=1}^n$, suggesting that the difference $\bar{\eta}_n^{b,noise,0}(\cdot) - \bar{\eta}_n^{noise,0}(\cdot)$, conditioning on data $\{(X_i,\,Y_i\}_{i=1}^n$, tends to capture the random pattern of the original leading noise term $\bar{\eta}_n^{noise,0}(\cdot)$, leading to the so-called bootstrap consistency as formally stated in the theorem below.

\begin{Assumption}\label{asmp:weights}
For $i=1,\dots, n$, bootstrap multipliers $w_{i}$s are i.i.d.~samples of a random variable $W$ that satisfies $\EE(W)=1$, $\Var(W)=1$ and $\PP(|W|\geq t)\leq 2 \exp(-t^2/C)$ for all $t\geq 0$ with a constant $C>0$.  
\end{Assumption}

One simple example that satisfies Assumption \ref{asmp:weights} is $W\sim N(1,1)$. A second example is bounded random variables, such as a scaled and shifted uniform random variable on the interval $[-1, 3]$. One popular choice in practice is discrete random variables, such as $W$ such that $\PP(W=3)= \PP(W=-1)= 1/2$. 

Let $\m D_n:\,=\{X_i, Y_i\}_{i=1}^n$ denote the data of sample size $n$, and $\PP^*(\cdot)= \PP(\,\cdot\, |\, \mathcal{D}_n)$ denote the conditional probability measure given $\mathcal{D}_n$.
We first establish the bootstrap consistency for local inference of leading noise term in the following Theorem \ref{thm:boot_local:main1}. 

\begin{Theorem}[Bootstrap consistency for local inference of leading noise term]\label{thm:boot_local:main1}
Assume that kernel $K$ satisfies Assumptions~\ref{asmp:A1}-\ref{asmp:A2} and multiplier weights $\{w_i\}_{i=1}^n$ satisfy Assumption~\ref{asmp:weights}. 
\begin{enumerate}
\item (Constant step size) Consider the step size $\gamma(n)=\gamma$ with $\gamma\in(0,\, n^{-\frac{\alpha-3}{3}})$ for some $\alpha>1$. 
Then for any $z_0\in [0,1]$, we have with probability at least $1-C n^{-1}$, 
\begin{align*}
& \sup_{u \in \mathbb{R}} \Big|\, \PP^* \Big(\sqrt{n (n\gamma)^{-\frac{1}{\alpha}}}\, \big(\bar{\eta}_n^{b,noise,0}(z_0)- \bar{\eta}_n^{noise,0}(z_0)  \big)\leq u \Big)  
 -\, \PP\Big( \sqrt{n (n\gamma)^{-\frac{1}{\alpha}}}\, \bar{\eta}_n^{noise,0}(z_0) \leq u \Big)  \Big|\\
 &\qquad \leq 
 C' (\log n)^{3/2} (n(n\gamma)^{-1/\alpha})^{-1/6}, 
\label{eq:GP_approx:local:con}
\end{align*}
where $C, C'$ are constants independent of $n$.

\item (Non-constant step size) 
Consider the step size $\gamma_i =i^{-\xi}$, $i=1,\dots, n$, for some $\xi\in(\min\{0, 1-\alpha/3\},\,1/2)$. 
Then the following bound holds with probability at least $1-2n^{-1}$, 
\begin{align}
\sup_{u \in \mathbb{R}} \Big|\, \PP^* \Big( \sqrt{n (n\gamma_n)^{-\frac{1}{\alpha}}} \,\big(\bar{\eta}_n^{b,noise,0}(z_0)- \bar{\eta}_n^{noise,0}(z_0)  \big) \leq u \Big) -&\, \PP\Big( \sqrt{n (n\gamma_n)^{-\frac{1}{\alpha}}} \,\bar{\eta}_n^{noise,0}(z_0)  \leq u \Big)  \Big| \nonumber\\
 &\qquad \leq \frac{C' (\log n)^{3/2}}{\sqrt{n(n\gamma_n)^{-3/(2\alpha)}}}. \end{align} 
\end{enumerate}
\end{Theorem}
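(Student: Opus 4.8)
\textbf{Proof strategy for Theorem~\ref{thm:boot_local:main1}.}
The plan is to recognize both quantities inside the two probabilities as weighted sums of independent mean-zero random variables conditionally on the design, and to invoke a Berry--Esseen-type argument for each, matching them through their first two moments. Fix $z_0 \in [0,1]$ and condition on $\mathcal D_n = \{X_i, Y_i\}_{i=1}^n$. On the bootstrap side we have, from~\eqref{eq:boot:noise},
$$
\sqrt{n(n\gamma_n)^{-1/\alpha}}\,\big(\bar\eta_n^{b,noise,0}(z_0) - \bar\eta_n^{noise,0}(z_0)\big) \;=\; \frac{\sqrt{n(n\gamma_n)^{-1/\alpha}}}{n}\sum_{k=1}^n (w_k-1)\,\epsilon_k\,\Omega_{n,k}(z_0) \;=:\; \sum_{k=1}^n a_{n,k}\,(w_k-1),
$$
where $a_{n,k} = n^{-1}\sqrt{n(n\gamma_n)^{-1/\alpha}}\,\epsilon_k\,\Omega_{n,k}(z_0)$ is $\mathcal D_n$-measurable; conditionally on $\mathcal D_n$ this is a sum of independent variables with zero mean and conditional variance $\sum_k a_{n,k}^2$. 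On the sampling side, $\sqrt{n(n\gamma_n)^{-1/\alpha}}\,\bar\eta_n^{noise,0}(z_0) = n^{-1}\sqrt{n(n\gamma_n)^{-1/\alpha}}\sum_k \epsilon_k\Omega_{n,k}(z_0)$; here, conditioning on $\{X_i\}_{i=1}^n$, the $\epsilon_k$ are i.i.d.\ $N(0,\sigma^2)$, so the sum is exactly Gaussian with variance $\sigma^2 n^{-2}(n\gamma_n)^{-1/\alpha}\sum_k \Omega_{n,k}^2(z_0)$, which by the calculations behind Theorem~\ref{thm:local:main1} concentrates around $\sigma_{z_0}^2$.

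The key steps are: (i) apply the Berry--Esseen theorem to the conditional law of $\sum_k a_{n,k}(w_k-1)$, obtaining that its Kolmogorov distance to $N(0, s_n^2)$, where $s_n^2 = \sum_k a_{n,k}^2$, is bounded by $\sum_k \EE|w-1|^3\,|a_{n,k}|^3 / s_n^3$; (ii) show that $s_n^2$ is, with high probability over $\mathcal D_n$, equal to $\sigma^2 n^{-2}(n\gamma_n)^{-1/\alpha}\sum_k \epsilon_k^2\Omega_{n,k}^2(z_0)$, which by a law-of-large-numbers / concentration argument (using $\EE\epsilon_k^2 = \sigma^2$, the sub-Gaussianity of $\epsilon_k^2$, and the uniform boundedness $\|\phi_\nu\|_\infty \le c_\phi$ from Assumption~\ref{asmp:A1}) is within a $1+o(1)$ factor of $\sigma_{z_0}^2 \asymp 1$; (iii) bound the third-moment ratio: $\sum_k |a_{n,k}|^3 \lesssim (n(n\gamma_n)^{-1/\alpha})^{3/2} n^{-3}\sum_k |\epsilon_k|^3 \Omega_{n,k}^3(z_0)$, and using $\Omega_{n,k}(z_0) \lesssim ((n+1-k)\gamma_n)^{1/\alpha}$ together with $\sum_k ((n+1-k)\gamma_n)^{3/\alpha} \asymp n(n\gamma_n)^{3/\alpha}$ and $\EE|\epsilon_k|^3 \lesssim \sigma^3$, one gets $\sum_k |a_{n,k}|^3 \lesssim (n(n\gamma_n)^{-1/\alpha})^{-1/2}\cdot\big(n(n\gamma_n)^{-1/\alpha}\big)$-type bound that, divided by $s_n^3 \asymp 1$, yields the $(\log n)^{3/2}(n(n\gamma_n)^{-1/\alpha})^{-1/6}$ order after absorbing the tail fluctuations of the $|\epsilon_k|^3$ sums into the polylog factor; (iv) compare the Gaussian $N(0,s_n^2)$ to the exact Gaussian sampling law $N(0,\tilde\sigma_n^2)$ with $\tilde\sigma_n^2 = \sigma^2 n^{-2}(n\gamma_n)^{-1/\alpha}\sum_k\Omega_{n,k}^2(z_0)$: since both are centered Gaussians, their Kolmogorov distance is $O(|s_n^2 - \tilde\sigma_n^2|/\tilde\sigma_n^2)$, again controlled by the concentration of $\sum_k\epsilon_k^2\Omega_{n,k}^2$ around its mean; (v) collect the high-probability events from (ii) and (iv), each of probability $\ge 1 - Cn^{-1}$, and take a union bound. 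The non-constant step size case is handled identically, with $\Omega_{n,k}(z_0)$ replaced by its analogue $\gamma_k\sum_\nu \mu_\nu\phi_\nu(X_k)\phi_\nu(z_0)\big(\sum_{j=k}^n\prod_{i=k+1}^j(1-\gamma_i\mu_\nu)\big)$ and the order bookkeeping using $\gamma_i = i^{-\xi}$; the constraint $\xi > \min\{0,1-\alpha/3\}$ (equivalently $3\xi > 3-\alpha$, i.e.\ $n(n\gamma_n)^{-3/(2\alpha)}\to\infty$) and the analogous constraint $\gamma < n^{-(\alpha-3)/3}$ in the constant case are exactly what makes the third-moment/variance ratio vanish.

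The main obstacle I expect is step (ii)--(iii): controlling, uniformly with the stated high probability, the random normalization $s_n^2$ and the random third-moment sum against the deterministic effective variance $\sigma_{z_0}^2$. The difficulty is twofold. First, the weights $\Omega_{n,k}(z_0)$ are highly non-uniform in $k$ (earlier data points carry much larger weight, of order $((n+1-k)\gamma_n)^{1/\alpha}$), so the effective sample size governing the Berry--Esseen rate is not $n$ but the smaller quantity $n(n\gamma_n)^{-1/\alpha}$, and one must carefully track this through every sum; the condition $\alpha > 1$ and the step-size restrictions are needed precisely so that the weighted Lyapunov ratio $\big(\sum_k\Omega_{n,k}^3\big)/\big(\sum_k\Omega_{n,k}^2\big)^{3/2} \asymp \big(n(n\gamma_n)^{-1/\alpha}\big)^{-1/2}$ is small. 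Second, the $\epsilon_k$ enter the normalization through $\epsilon_k^2$ and $|\epsilon_k|^3$, whose partial sums weighted by $\Omega_{n,k}^2$ and $\Omega_{n,k}^3$ must be shown to concentrate; because the weights are unbounded in the sense of being spread over a scale $n(n\gamma_n)^{-1/\alpha}$, a naive Bernstein bound loses logarithmic factors, which is the source of the $(\log n)^{3/2}$ in the final rate. Handling the eigenfunction sums inside $\Omega_{n,k}$ (where one must sum $\phi_\nu^2(X_k)$ over $\nu$ against the spectral weights $[1-(1-\gamma\mu_\nu)^{n+1-k}]^2$) relies on Assumption~\ref{asmp:A1} to pass from $\sum_\nu\phi_\nu^2(X_k)[\cdots]$ to its expectation $\sum_\nu[\cdots]$ uniformly, and this uniform-in-$k$ control is the technically delicate part that the detailed proof in~\cite{liu2023supp} must carry out.
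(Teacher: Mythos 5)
Your proposal is correct in substance and shares the paper's overall architecture — condition on $\mathcal D_n$, view $\bar\eta_n^{b,noise,0}(z_0)-\bar\eta_n^{noise,0}(z_0)$ as a weighted sum of independent mean-zero multipliers with $\mathcal D_n$-measurable weights $a_{n,k}\propto\epsilon_k\Omega_{n,k}(z_0)$, normal-approximate it, and then match the conditional variance to $\sigma_{z_0}^2$ by concentration, using the pointwise CLT of Theorem~\ref{thm:local:main1} (and the exact conditional Gaussianity of the $\epsilon_k$-sum) on the sampling side. Where you diverge is the normal-approximation tool: the paper introduces an intermediate Gaussian-multiplier process $\sum_k(e_k-1)\epsilon_k\Omega_{n,k}(z_0)$ with matched conditional variance and runs the same Gaussian-approximation/comparison machinery (smooth-max interpolation and covariance comparison in the style of Chernozhukov--Chetverikov--Kato) that it develops for the global theorem, which is where the $(\log n)^{3/2}$ and the $-1/6$ exponent come from; you instead apply the classical Berry--Esseen theorem conditionally (legitimate, since Assumption~\ref{asmp:weights} gives $\EE|W-1|^3<\infty$) together with a univariate centered-Gaussian comparison that is linear in the variance difference. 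Your route is more elementary, is perfectly adequate for the scalar statement, and in fact yields error terms of order $(n(n\gamma)^{-1/\alpha})^{-1/2}$-type (Lyapunov ratio and variance fluctuation), i.e.\ a bound at least as sharp as the stated one; the price is that it does not extend to the supremum statement of Theorem~\ref{thm:global:main1}, which is precisely why the paper keeps the local proof parallel to the global machinery.

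Two points to tighten. First, your rate bookkeeping in step (iii) is muddled: a Berry--Esseen Lyapunov ratio does not produce the $(\log n)^{3/2}(n(n\gamma)^{-1/\alpha})^{-1/6}$ form — in the paper that shape arises from the Gaussian-comparison step applied to the variance fluctuation $|s_n^2-\sigma_{z_0}^2|\lesssim(n(n\gamma)^{-1/\alpha})^{-1/2}\log n$; in your scheme you should simply verify that each of your error terms (Lyapunov ratio, variance concentration over $\epsilon$ and over the design, and the sampling-side normal-approximation error) is dominated by the stated bound under the stated step-size conditions, which it is, since the theorem only claims an upper bound. Second, be explicit that the sampling law in the statement is unconditional: conditionally on the design it is $N(0,\tilde\sigma_n^2)$ with $\tilde\sigma_n^2$ random, so you need the extra (routine) step of showing that on a design event of probability at least $1-Cn^{-1}$ one has $|\tilde\sigma_n^2-\sigma_{z_0}^2|$ small, and then passing from the Gaussian mixture to $N(0,\sigma_{z_0}^2)$ before comparing with the conditional bootstrap law; your steps (ii) and (v) contain the ingredients but currently conflate the conditional and unconditional sampling distributions.
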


\begin{remark}
Recall that from (\ref{eq:local_noise}) and (\ref{eq:boot:noise}), we can express $\bar{\eta}_n^{noise,0}(z_0)= \frac{1}{n}\sum_{k=1}^n \epsilon_k \cdot \Omega_{n,k}(z_0)$ and $ \bar{\eta}_n^{b,noise,0}(z_0) - \bar{\eta}_n^{noise,0}(z_0) = \frac{1}{n}\sum_{k=1}^n (w_k-1)\cdot \epsilon_k \cdot \Omega_{n,k}(z_0)$. 
 Theorem \ref{thm:local:main1} shows that $ \sum_{k=1}^n \epsilon_k \cdot \Omega_{n,k}(z_0)$ can be approximated by a normal distribution $\Phi\big(0, n^{-1}(n\gamma_n)^{1/\alpha}\sigma^2_{z_0}\big)$. 
To prove Theorem \ref{thm:boot_local:main1}, we introduce an intermediate empirical process evaluated at $z_0$ as $\sum_{k=1}^n (e_k-1)\cdot \epsilon_k \cdot \Omega_{n,k}(z_0)$ where $e_k$'s are independent and identically distributed standard normal random variables, such that $\sum_{k=1}^n (e_k-1)\cdot \epsilon_k \cdot \Omega_{n,k}(z_0)\mid \mathcal{D}_n$ has the same (conditional) variance as the (conditional) variance of $\big(\bar{\eta}_n^{b,noise,0}(z_0)- \bar{\eta}_n^{noise,0}(z_0) \big)\mid \mathcal{D}_n$. 
\end{remark}

\begin{Theorem}[Bootstrap consistency for global inference of leading noise term]\label{thm:global:main1}
Assume that kernel $K$ satisfies Assumptions~\ref{asmp:A1}-\ref{asmp:A2} and multiplier weights $\{w_i\}_{i=1}^n$ satisfy Assumption~\ref{asmp:weights}. 
\begin{enumerate}
\item (Constant step size) Consider the step size $\gamma(n)=\gamma$ with $\gamma\in(0,\, n^{\frac{\alpha-3}{3}})$ for some $\alpha>2$. 
Then the following bound holds with probability at least $1-5 n^{-1}$ (with respect to the randomness in data $\m D_n$)
\begin{align*}
\sup_{u\in \mathbb{R}} \Big|\, \PP^* \Big(\sqrt{n (n\gamma)^{-\frac{1}{\alpha}}}\, \|\,\bar{\eta}_n^{b,noise,0}- \bar{\eta}_n^{noise,0}\,\|_{\infty}  \big)\leq u \Big)  
 -&\, \PP\Big( \sqrt{n (n\gamma)^{-\frac{1}{\alpha}}}\, \| \,\bar{\eta}_n^{noise,0}\,\|_{\infty} \leq u \Big)  \Big|\\
 &\quad\leq 
  C(\log n)^{3/2}\big(n(n\gamma)^{-3/\alpha}\big)^{-1/8}.
 \label{eq:GP_approx:con}
\end{align*}

\item (Non-constant step size) 
Consider the step size $\gamma_i =i^{-\xi}$, $i=1,\dots, n$, for some $\xi\in(\min\{0, 1-\alpha/3\},\,1/2)$. 
Then the following bound holds with probability at least $1-5n^{-1}$, 
\begin{align*}
\sup_{u \in \mathbb{R}} \Big|\, \PP^* \Big( \sqrt{n (n\gamma_n)^{-\frac{1}{\alpha}}} \,\|\,\bar{\eta}_n^{b,noise,0}- \bar{\eta}_n^{noise,0}\,\|_{\infty}\leq u \Big) -&\, \PP\Big( \sqrt{n (n\gamma_n)^{-\frac{1}{\alpha}}} \,\|\,\bar{\eta}_n^{noise,0}\,\|_{\infty} \leq u \Big)  \Big| \nonumber\\
 &\quad\leq 
  C(\log n)^{3/2}\big(n(n\gamma_n)^{-3/\alpha}\big)^{-1/8}.
\end{align*}
\end{enumerate}
\end{Theorem}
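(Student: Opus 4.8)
\textbf{Proof proposal for Theorem~\ref{thm:global:main1}.}

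The plan is to prove the global (supremum-norm) bootstrap consistency by combining two Gaussian approximation steps with a comparison of the two Gaussian processes they produce, following the Chernozhukov--Chetverikov--Kato program but adapted to weighted, non-identically distributed summands. First I would work conditionally on the design $\mathcal D_n = \{X_i\}_{i=1}^n$, since both $\bar\eta_n^{noise,0}(\cdot) = n^{-1}\sum_{k=1}^n \epsilon_k\,\Omega_{n,k}(\cdot)$ and $\bar\eta_n^{b,noise,0}(\cdot) - \bar\eta_n^{noise,0}(\cdot) = n^{-1}\sum_{k=1}^n (w_k-1)\,\epsilon_k\,\Omega_{n,k}(\cdot)$ are, conditionally, suprema over $x\in\mathcal X$ of sums of independent mean-zero terms indexed by $x$. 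The index class here is $\{\,x \mapsto \Omega_{n,k}(x)\,\}$; using Assumption~\ref{asmp:A1} (uniform boundedness and the $Lk$-Lipschitz bound on $\phi_k$) together with the eigenvalue decay $\mu_k \asymp k^{-\alpha}$ from Assumption~\ref{asmp:A2}, I would verify that each $\Omega_{n,k}$ is a bounded, Lipschitz function of $x$ with the Lipschitz constant and sup-norm controlled by powers of $(n\gamma)$ (respectively $(n\gamma_n)$), so the class has a polynomial covering number and the VC-type entropy conditions required by the Gaussian approximation theorems hold. The effective ``envelope'' and the key variance proxy are $\sigma^2_{z_0}$-type quantities, whose $\mathcal O(1)$ scaling after multiplication by $\sqrt{n(n\gamma_n)^{-1/\alpha}}$ is exactly what Theorem~\ref{le:bias_variance} and the pointwise analysis already established; the supremum-norm version needs the lower bound $\inf_x \sigma^2_x \gtrsim 1$, which I would obtain from Assumption~\ref{asmp:A1} ensuring $\sum_\nu \phi_\nu^2(x)$-type sums do not degenerate uniformly.

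The main line of argument has three pieces. (i) A Gaussian approximation for $\sqrt{n(n\gamma_n)^{-1/\alpha}}\,\|\bar\eta_n^{noise,0}\|_\infty$: replace $\{\epsilon_k\}$ by a Gaussian coupling and apply the weighted extension of the Chernozhukov--Chetverikov--Kato maximal/anti-concentration bounds (the extension alluded to after Remark~\ref{rem:asym_var}) to get that its law is within Kolmogorov distance $\lesssim (\log n)^{3/2}\big(n(n\gamma_n)^{-3/\alpha}\big)^{-1/8}$ of the law of $\sup_x \mathbb G(x)$ for an appropriate Gaussian process $\mathbb G$ with covariance $\Sigma_n(x,x') = n^{-1}(n\gamma_n)^{-1/\alpha}\sum_k \sigma^2 \Omega_{n,k}(x)\Omega_{n,k}(x')$. (ii) A conditional Gaussian approximation for $\sqrt{n(n\gamma_n)^{-1/\alpha}}\,\|\bar\eta_n^{b,noise,0}-\bar\eta_n^{noise,0}\|_\infty$ given $\mathcal D_n$: here the summands are $(w_k-1)\epsilon_k\Omega_{n,k}(x)$, which conditionally on $\mathcal D_n$ and on $\{\epsilon_k\}$ are independent with conditional variance $\epsilon_k^2\,\Omega_{n,k}(x)\Omega_{n,k}(x')$; applying the (multiplier-bootstrap) Gaussian approximation gives that its conditional law is close to $\sup_x \widetilde{\mathbb G}(x)$ where $\widetilde{\mathbb G}$ has the empirical covariance $\widehat\Sigma_n(x,x') = n^{-1}(n\gamma_n)^{-1/\alpha}\sum_k \epsilon_k^2\,\Omega_{n,k}(x)\Omega_{n,k}(x')$. (iii) A Gaussian comparison step (Gaussian--Gaussian comparison lemma of CCK type) bounding the Kolmogorov distance between $\sup_x\mathbb G(x)$ and $\sup_x\widetilde{\mathbb G}(x)$ by a function of $\|\Sigma_n - \widehat\Sigma_n\|_\infty$; I would then show $\|\Sigma_n-\widehat\Sigma_n\|_\infty = \sup_{x,x'} n^{-1}(n\gamma_n)^{-1/\alpha}\sum_k(\epsilon_k^2-\sigma^2)\Omega_{n,k}(x)\Omega_{n,k}(x')$ is small with probability $1 - O(n^{-1})$, using a Bernstein/maximal inequality over the polynomial covering of the index class together with the sub-exponential tails of $\epsilon_k^2 - \sigma^2$. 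Chaining these three bounds (each of the asserted order, and the event on which (iii) and the design-level entropy bounds hold having probability $1-5n^{-1}$) yields the stated inequality. The role of the constraints $\alpha > 2$ and $\xi \in (\min\{0,1-\alpha/3\},1/2)$ is to guarantee that $n(n\gamma_n)^{-3/\alpha}\to\infty$, so the $(\cdot)^{-1/8}$ rate is genuinely $o(1)$, and to keep the remainder terms from Theorem~\ref{le:bias_variance} negligible relative to this rate.

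The hardest part, I expect, is step (iii) together with the uniform control of the index class: unlike the standard CCK setting, the ``functions'' $\Omega_{n,k}$ depend on $n$ through $(n\gamma_n)$ and have sup-norm and Lipschitz constants that grow polynomially in $(n\gamma_n)$, so the entropy integral, the envelope, and the variance bounds must be tracked with explicit $n$-dependence and shown to combine into precisely the $(\log n)^{3/2}\big(n(n\gamma_n)^{-3/\alpha}\big)^{-1/8}$ rate rather than something weaker. In particular, bounding $\|\Sigma_n - \widehat\Sigma_n\|_\infty$ uniformly in $(x,x')$ requires a maximal inequality whose logarithmic factor is exactly of order $(\log n)^{3/2}$, which is why the exponent on $\log n$ matches that in Theorem~\ref{thm:boot_local:main1}; this forces a careful, rather than crude, entropy estimate. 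A secondary technical obstacle is the non-identical distribution of the summands (earlier $k$ carry larger weight $\Omega_{n,k}$), which means one cannot invoke the i.i.d.\ versions of the CCK results off the shelf and must instead use (and, where necessary, re-derive) their Lindeberg-type weighted analogues, keeping the variance proxy $\min_x \sum_k \Omega_{n,k}^2(x) \gtrsim n(n\gamma_n)^{1/\alpha}$ and the third-moment term $\sum_k \mathbb E|\epsilon_k|^3 \Omega_{n,k}^3(x)$ under simultaneous control.
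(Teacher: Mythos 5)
Your overall architecture is the right one — discretize, apply a CCK-type Gaussian approximation, apply its multiplier-bootstrap analogue conditionally on the data, and close the loop with a Gaussian comparison driven by covariance closeness — and your step (ii) and the discretization/anti-concentration bookkeeping do match what the paper does (Lemmas \ref{app:le:GP:etob:con} and the Steps \RNum{1}/\RNum{5} of Figure \ref{fig:flowchart:GA}). But there is a genuine gap on the side of the original statistic. You propose to work conditionally on the design and to take as reference the Gaussian process $\mathbb G$ with covariance $\Sigma_n(x,x')=n^{-1}(n\gamma_n)^{-1/\alpha}\sum_k\sigma^2\Omega_{n,k}(x)\Omega_{n,k}(x')$. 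Conditionally on the design, $\bar{\eta}_n^{noise,0}$ is \emph{exactly} this Gaussian process (the $\epsilon_k$ are already Gaussian, so your ``Gaussian coupling'' in step (i) is vacuous), and your step (iii) then only needs to absorb the fluctuation $\sum_k(\epsilon_k^2-\sigma^2)\Omega_{n,k}(x)\Omega_{n,k}(x')$. The end product of your chain is therefore a bound on $\sup_u\big|\PP^*(\sqrt{n(n\gamma)^{-1/\alpha}}\|\bar{\eta}_n^{b,noise,0}-\bar{\eta}_n^{noise,0}\|_\infty\le u)-\PP\big(\sqrt{n(n\gamma)^{-1/\alpha}}\|\bar{\eta}_n^{noise,0}\|_\infty\le u\,\big|\,X_1,\dots,X_n\big)\big|$, i.e.\ bootstrap consistency for the \emph{design-conditional} law. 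The theorem, however, compares against the \emph{unconditional} law $\PP(\sqrt{n(n\gamma)^{-1/\alpha}}\|\bar{\eta}_n^{noise,0}\|_\infty\le u)$, and passing from one to the other is not free: you would additionally have to show that the design-conditional distribution of the supremum concentrates, uniformly in $u$, around the unconditional one, which itself requires controlling the design fluctuation of the covariance, $\sup_{x,x'}\big|n^{-1}(n\gamma)^{-1/\alpha}\sum_k\big[\Omega_{n,k}(x)\Omega_{n,k}(x')-\EE\,\Omega_{n,k}(x)\Omega_{n,k}(x')\big]\big|$ (equivalently the cross terms $\phi_\nu(X_k)\phi_{\nu'}(X_k)-\delta_{\nu\nu'}$), together with a further Gaussian comparison and anti-concentration step. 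Your step (iii) never touches this source of randomness.

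The paper avoids the issue by choosing the reference Gaussian vector $\bar Z_n$ to have the \emph{deterministic} covariance $(n\gamma)^{-1/\alpha}\sum_i\sum_\nu(1-(1-\gamma\mu_\nu)^{n-i})^2\phi_\nu(t_j)\phi_\nu(t_k)$, obtained by averaging over both $X_i$ and $\epsilon_i$. Its Lemma \ref{app:le:GP:atoz:con} is then a genuine weighted Lindeberg/CCK Gaussian approximation over the joint randomness of $(X_i,\epsilon_i)$ — the real content of that step is precisely the design randomness you conditioned away — and its Lemma \ref{app:le:GP:ztoe:con} is the covariance-concentration plus Gaussian-comparison step in which the data-conditional covariance, involving both $\epsilon_i^2$ and $\phi_\nu(X_i)\phi_{\nu'}(X_i)$, is shown to concentrate around the deterministic one with probability $1-\exp(-C\log n)$. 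So your plan can be repaired, but only by adding exactly this missing component (at which point it coincides with the paper's proof); as written, it proves a different, conditional statement, and the claimed rate $(\log n)^{3/2}\big(n(n\gamma)^{-3/\alpha}\big)^{-1/8}$ has not been shown to survive the extra de-conditioning step.
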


\begin{remark}
Theorem \ref{thm:global:main1} demonstrates that the sampling distribution of $\sqrt{n (n\gamma_n)^{-1/\alpha}} \|\bar{\eta}_n^{noise,0}\|_{\infty}$ can be approximated closely by the conditional distribution of $\sqrt{n (n\gamma_n)^{-1/\alpha}} \|\bar{\eta}_n^{b,noise,0}- \bar{\eta}_n^{noise,0}\|_{\infty}$ given data set $\mathcal{D}_n$. This theorem serves as the theoretical foundation for adopting the multiplier bootstrap method detailed in Section~\ref{sec:MBootstrap} for global inference. 
Recall that the optimal step size for achieving the minimax optimal estimation error is $\gamma = n^{-\frac{1}{\alpha+1}}$ for the constant step size and $\gamma_i= i^{-\frac{1}{\alpha+1}}$ for the non-constant step size (Theorem~\ref{thm:local:main1}). To ensure that the Kolmogorov distance bound in Theorem~\ref{thm:global:main1} decays to $0$ as $n \to \infty$ under these step sizes, we require $\alpha > 2$. It is likely that our current Kolmogorov distance bound, which is dominated by an error term that arises from applying the Gaussian approximation to analyze $\|\bar{\eta}_n^{noise,0}\|_{\infty}$ and $\|\bar{\eta}_n^{b,noise,0}- \bar{\eta}_n^{noise,0}\|_{\infty}$ through space-discretization (see Section~\ref{sec:sketch:pf:GA}), can be substantially refined.  We leave this improvement of the Kolmogorov distance bound, which would consequently lead to a weaker requirement on $\alpha$, to future research.
\end{remark}

Since the leading noise terms $\bar{\eta}_n^{noise,0}$ and $\bar{\eta}_n^{b,noise,0}$ contribute to the primary source of randomness in the functional SGD and its bootstrapped counterpart (Theorem~\ref{le:bias_variance}), 
Theorem \ref{thm:global:main1} then implies the bootstrap consistency for statistical inference of $f^\ast$ based on bootstrapped functional SGD.
Particularly, we present the following Corollary, which establishes a high probability supremum norm bound for the remainder term in the bootstrapped functional SGD decomposition~\eqref{eq:sgd:higher_order_exp}. Such a bound further implies that the sampling distribution of $\sqrt{n (n\gamma_n)^{-1/\alpha}} \|\bar{f}_n - f^\ast\|_{\infty}$ can be effectively approximated by the conditional distribution of $\sqrt{n (n\gamma_n)^{-1/\alpha}} \|\bar{f}^b_n - \bar{f}_n\|_{\infty}$ given data $\m D_n$. Recall that we use $\PP^*(\cdot)= \PP(\,\cdot\, |\, \mathcal{D}_n)$ to denote the conditional probability measure given $\mathcal{D}_n=\{X_i, Y_i\}_{i=1}^n$.

\begin{Corollary}[Bootstrap consistency for functional SGD inference]\label{cor:global:main}
Assume that kernel $K$ satisfies Assumptions~\ref{asmp:A1}-\ref{asmp:A2} and  multiplier weights $\{w_i\}_{i=1}^n$ satisfies Assumption~\ref{asmp:weights}. 
\begin{enumerate}
\item (Constant step size) Consider the step size $\gamma(n)=\gamma$ with $\gamma\in(0,\, n^{\frac{\alpha-3}{3}})$ for some $\alpha>2$. Then it holds with probability at least 
$1-\gamma^{1/4}-\gamma^{1/2}-1/n$ 
with respect to the randomness of $\mathcal{D}_n$ that
\begin{equation}\label{eq:bootstrap_remainder}
 \PP^* \Big(\|\,\bar{f}^b_n - f_n - \bar{\eta}_n^{b,bias,0}-\bar{\eta}_n^{b,noise,0}- \bar{\eta}_n^{bias,0}+ \bar{\eta}_n^{noise,0}\,\|^2_{\infty} \geq \gamma^{1/4} (n\gamma)^{1/\alpha}n^{-1}\Big) \leq \gamma^{1/4}+ \gamma^{1/2}+ 1/n.
 \end{equation} 
Furthermore, for $0<\gamma< n^{-\frac{4}{7\alpha +1}}(\log n)^{-3/2}$, it holds with probability at least $1-5n^{-1}-3\gamma^{1/2}-\gamma^{-1/4}$, that
\begin{align*}
 \sup_{u \in \mathbb{R}} \Big|\, \PP^* \Big(\sqrt{n (n\gamma)^{-1/\alpha}} \,\|\,\bar{f}^b_n \,- & \,\bar{f}_n\,\|_{\infty}\leq u \Big) - \PP\Big( \sqrt{n (n\gamma)^{-1/\alpha}}\, \|\, \bar{f}_n - f^\ast-\mx{Bias}(f^\ast)\,\|_{\infty} \leq u \Big)  \Big| \\
&\qquad \leq C_1(\log n)^{3/2} n^{-1/8} (n\gamma)^{3/(8\alpha)} +C\gamma^{1/4}
\end{align*}
where $\mx{Bias}(f^\ast) = \bar \eta_n^{bias,0}$ denotes the bias term, $C_1, C>0$ are constants.

\item (Non-constant step size)
Consider the step size $\gamma_i = i^{-\frac{1}{\alpha+1}}$ for $i=1,\dots, n$.  
Then it holds with probability at least $1-\gamma_n^{1/4}-\gamma_n^{1/2}-1/n$ that 
$$ \PP^* \Big(\|\,\bar{f}^b_n - f^\ast - \bar{\eta}_n^{b,bias,0}-\bar{\eta}_n^{b,noise,0}\,\|^2_{\infty} \geq \gamma_n^{1/4} (n\gamma_n)^{1/\alpha}n^{-1}\Big) \leq \gamma_n^{1/4}+ \gamma_n^{1/2}+ 1/n.$$ 
Furthermore, it holds with probability at least $1-5n^{-1}-\gamma_n^{1/2}-\gamma_n^{1/4}$ that  
\begin{align*}
 \sup_{u \in \mathbb{R}} \Big| \,\PP^* \Big(\sqrt{n (n\gamma_n)^{-1/\alpha}}\, \|\,\bar{f}^b_n \,- &\,\bar{f}_n\,\|_{\infty}\leq u \Big) - \PP\Big( \sqrt{n (n\gamma_n)^{-1/\alpha}} \,\|\,\bar{f}_n - f^\ast-\mx{Bias}(f^\ast)\,\|_{\infty} \leq u \Big)  \Big| \\
&\qquad \lesssim  (\log n)^{3/2} n^{-1/8}(n\gamma_n)^{\frac{3}{8\alpha}}+ \gamma_n^{1/4}.
\end{align*}
\end{enumerate}
\end{Corollary}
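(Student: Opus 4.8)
The plan is to propagate the higher-order expansions already established for the functional SGD estimator (Theorem~\ref{le:bias_variance}) and for its bootstrapped version~\eqref{eq:sgd:higher_order_exp}, so that comparing the two supremum-norm statistics reduces to comparing their leading noise processes, which is exactly the content of Theorem~\ref{thm:global:main1}. Since the leading bootstrap bias coincides with the original one, $\bar\eta_n^{b,bias,0}\equiv\bar\eta_n^{bias,0}$, subtracting the expansions in~\eqref{eq:sgd:higher_order_exp:1} and~\eqref{eq:sgd:higher_order_exp} yields
\[
\bar f_n^b-\bar f_n=\big(\bar\eta_n^{b,noise,0}-\bar\eta_n^{noise,0}\big)+\big(R_n^b-R_n\big),\qquad \bar f_n-f^\ast-\mx{Bias}(f^\ast)=\bar\eta_n^{noise,0}+R_n,
\]
where $R_n$ and $R_n^b$ denote the remainder terms of the two expansions. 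Hence it suffices to (i) establish a high-probability supremum-norm bound on $R_n^b$ — which is precisely the first displayed inequality of the corollary — and then (ii) transfer the resulting negligibility through the supremum norm and through the Kolmogorov distance, invoking Theorem~\ref{le:bias_variance} to control $R_n$ and Theorem~\ref{thm:global:main1} for the leading noise term.

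\textbf{Step 1 (bootstrap remainder bound).} I would re-run the proof of Theorem~\ref{le:bias_variance} on the perturbed recursion $\widehat f_i^b-f^\ast=(I-\gamma_i w_i K_{X_i}\otimes K_{X_i})(\widehat f_{i-1}^b-f^\ast)+\gamma_i w_i \epsilon_i K_{X_i}$. The essential structural fact is $\EE[w_i K_{X_i}\otimes K_{X_i}]=\Sigma$ and $\EE[w_i\epsilon_iK_{X_i}]=0$, so the leading bias and noise recursions are driven by the same operator $\Sigma$ as in~\eqref{eq:bias:lead:main}--\eqref{eq:lead:noise}, and the remainder again splits into a bias-remainder and a noise-remainder, each further expanded to higher order. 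Bounding these under $\|\cdot\|_\infty$ proceeds as in Theorem~\ref{le:bias_variance}: the augmented RKHS $\mathbb H_a$ of Remark~\ref{remark:3_2} is used as a bridge for the (delicate) second-order noise-remainder, and uniform laws of large numbers control the empirical-process terms. The only new ingredient is the multiplier randomness, but since $W$ is sub-Gaussian (Assumption~\ref{asmp:weights}) the operators $\gamma_i w_i K_{X_i}\otimes K_{X_i}$ have the same higher moments up to constants as $\gamma_i K_{X_i}\otimes K_{X_i}$, and the products $\prod_j(I-\gamma_j w_j K_{X_j}\otimes K_{X_j})$ obey the same operator-norm contraction estimates in expectation; thus every moment and tail bound from the original argument survives with enlarged constants. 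Conditioning on $\mathcal D_n$ at the end isolates the $\{w_i\}$-randomness and produces the claimed $\PP^*$-bound together with the stated $\mathcal D_n$-probability.

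\textbf{Step 2 (transfer to Kolmogorov distance).} Set $T^b:=\sqrt{n(n\gamma)^{-1/\alpha}}\,\|\bar\eta_n^{b,noise,0}-\bar\eta_n^{noise,0}\|_\infty$ and $T:=\sqrt{n(n\gamma)^{-1/\alpha}}\,\|\bar\eta_n^{noise,0}\|_\infty$. Theorem~\ref{thm:global:main1} gives $\sup_u|\PP^*(T^b\le u)-\PP(T\le u)|\lesssim(\log n)^{3/2}(n(n\gamma)^{-3/\alpha})^{-1/8}$ on an event of $\mathcal D_n$-probability $\ge 1-5/n$. By Step~1 and Theorem~\ref{le:bias_variance}, on a further high-probability event $\big|\sqrt{n(n\gamma)^{-1/\alpha}}\|\bar f_n^b-\bar f_n\|_\infty-T^b\big|\le\delta_n$ and $\big|\sqrt{n(n\gamma)^{-1/\alpha}}\|\bar f_n-f^\ast-\mx{Bias}(f^\ast)\|_\infty-T\big|\le\delta_n$, where matching the scaling exponents forces $\delta_n\lesssim\gamma^{1/4}$ (up to logarithmic factors). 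Replacing $T$ and $T^b$ by these perturbed statistics costs, in Kolmogorov distance, at most $\sup_u\PP(|T-u|\le\delta_n)$ plus the failure probabilities; by the anti-concentration inequality for suprema of Gaussian processes~\cite{chernozhukov2014anti}, whose relevant complexity (the expected supremum) is only of order $\sqrt{\log n}$ because at most $\asymp(n\gamma)^{1/\alpha}$ eigenmodes are active, the first term is $\lesssim\delta_n\sqrt{\log n}$, which is absorbed into the stated $C\gamma^{1/4}$ term. Combining the three contributions and the various failure probabilities gives the asserted bound; the non-constant step-size case is verbatim with $\gamma$ replaced throughout by $\gamma_n=n^{-1/(\alpha+1)}$.

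\textbf{Main obstacle.} The substantive work is Step~1: reproving the supremum-norm remainder bound for the bootstrapped recursion. As stressed in Remark~\ref{remark:3_2}, the second-order recursion cannot be handled by crudely converting an $L_2$ bound through the reproducing kernel of $\mathbb H$ — that gives a bound competing with the leading term — so the augmented-RKHS/empirical-process analysis must be carried out afresh while simultaneously tracking the extra multiplier randomness, keeping the exponents aligned so that the scaled remainder is $O(\gamma^{1/4})$. A secondary, milder difficulty is quantifying the Gaussian-process complexity in the anti-concentration step sharply enough that the transfer in Step~2 does not spoil the $n^{-1/8}$-type rate inherited from Theorem~\ref{thm:global:main1}.
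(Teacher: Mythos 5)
Your plan follows the paper's own route: the bootstrap decomposition in Section~\ref{app:bSGD:decomp:con} cancels the common leading bias, reduces $\bar f_n^b-\bar f_n$ to the leading noise difference $\bar\eta_n^{b,noise,0}-\bar\eta_n^{noise,0}$ plus remainders, bounds the bootstrapped remainders by re-running the Theorem~\ref{le:bias_variance} augmented-RKHS/empirical-process analysis with the unit-mean sub-Gaussian multipliers (using $\EE[w_iK_{X_i}\otimes K_{X_i}]=\Sigma$), and then combines Theorem~\ref{thm:global:main1} with Gaussian anti-concentration to absorb the negligible remainders into the Kolmogorov bound — exactly your Steps 1--2. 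The only soft spot is the exponent bookkeeping in Step 2: the remainder bounds yield a scaled sup-norm perturbation of order $\gamma^{1/8}$ (up to logarithms), not $\gamma^{1/4}$, so the anti-concentration cost together with the event failure probabilities ($\gamma^{1/4}+\gamma^{1/2}+1/n$) must be tracked carefully, as in the paper, to land on the stated $C\gamma^{1/4}$ term under the step-size restriction.
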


\begin{remark}\label{rem:undersmooth}
Corollary \ref{cor:global:main} suggests that a smaller step size $\gamma$ (or $\gamma_n$) and a larger sample size $n$ result in more accurate uncertainty quantification. As discussed in Section~\ref{sec:bs-consistency}, the functional SGD estimator and its bootstrap counterpart share the same leading bias term, which eliminates the bias in the conditional distribution of $\bar f_n^b - \bar f_n$ given $\m D_n$. However, the bias term $\mx{Bias}(f^\ast)$ still exists in the sampling distribution of $\bar f_n - f^\ast$. According to Theorem~\ref{le:bias_variance}, this bias term can be bounded by $O(1/\sqrt{n\gamma})$ with high probability, while the convergence rate of the leading noise term under the supremum norm metric is of order $O(1/\sqrt{n(n\gamma)^{-1/\alpha}})$. Therefore, to make the bias term asymptotically negligible, we can adopt the common practice of ``undersmoothing"~\cite{neumann1998simultaneous,armstrong2020simple}. In our context, this means slightly enlarging the step size as $\gamma=\gamma(n)= n^{-\frac{1}{\alpha+1}+\varepsilon}$ (constant step size) or $\gamma_i = i^{-\frac{1}{\alpha+1}+\varepsilon}$ for $i=1, \dots, n$ (non-constant step size), where $\varepsilon$ is any small positive constant.
\end{remark}

\subsection{Online inference algorithm}\label{sec:online_alg}

\begin{algorithm}[ht!]
\KwData{Number of bootstrap samples $J$, initial step size $\gamma_0>0$, initial estimates $\widehat{f}^{b,j}_0= \widehat{f}_0$, $j=1,\dots, J$, confidence level $(1-\alpha)$.}
\For{ $i=1,2, \dots, n$}{
Update $\widehat{f}_i = \widehat{f}_{i-1} - \gamma_i \nabla\ell_i(\widehat{f}_{i-1})$\\
Update $\bar{f}_i = (i-1)\bar{f}_{i-1}/i + \widehat{f}_i/i$\\
\For{$j=1,\dots, J$}{
Update $\widehat{f}^{b,j}_i = \widehat{f}^{b,j}_{i-1} - \gamma_i w_{i,j}\nabla\ell_i(\widehat{f}^{b,j}_{i-1})$\\
Update $\bar{f}^{b,j}_i = (i-1)\bar{f}^{b,j}_{i-1}/i + \widehat{f}^{b,j}_i/i$.
}
}
Output: SGD estimators $\bar{f}_{n}$ and the Bootstrap estimates $\{\bar{f}^{b,j}_{n}\}_{j=1}^J$. Calculate $\{\bar{f}^{b,j}_{n} - \bar{f}_{n}\}_{j=1}^J$. \\

Construct the $100(1-\alpha)\%$ confidence interval for $f$ evaluated at any fixed $z_0$ via  
\begin{enumerate}
\item Normal CI: $(\bar{f}_n(z_0) - z_{\alpha/2}\sqrt{T_n^b(z_0)},\, \bar{f}_n(z_0) + z_{\alpha/2}\sqrt{T_n^b(z_0)})$, where $T_n^{b}(z_0)= \frac{1}{J-1}\sum_{j=1}^J \big(\bar{f}_n^{b,j}(z_0)- \bar{f}_n(z_0)\big)^2$. 
\item Percentile CI: $\big(\bar{f}_n(z_0)-C_{\alpha/2},\,\bar{f}_n(z_0)+  C_{1-\alpha/2} \big)$, where $C_{\alpha/2}$ and $C_{1-\alpha/2}$ are the sample $\alpha/2$-th and $(1-\alpha/2)$-th quantile of $\{\bar{f}^{b,j}_{n}(z_0) - \bar{f}_{n}(z_0)\}_{j=1}^J$. 
\end{enumerate}

Construct the $ 100(1-\alpha)\%$ confidence band for $f$ at any $x\in \cX$: \\
Step 1: Evenly choose $t_1, \dots, t_M \in \cX$. \\
Step 2: For $j \in 1, \dots, J$, calculate
$\max_{1\leq m \leq M} \big|\bar{f}_n^{b,j}(t_m)-\bar{f}_n(t_m)\big|.$ \\ 
Step 3: Calculate the sample $\alpha/2$-th and the $(1-\alpha/2)$-th quantiles of 
$$\max_{1\leq m \leq M} \big|\bar{f}_n^{b,1}(t_m)-\bar{f}_n(t_m)\big|, \dots, \max_{1\leq m \leq M} \big|\bar{f}_n^{b,J}(t_m)-\bar{f}_n(t_m)\big|,$$
 and denote them by $Q_{\alpha/2}$ and $Q_{1-\alpha/2}$.\\
 Step 4: Construct the $100(1-\alpha) \%$ confidence band as $\big\{g:\, \m X\to\mathbb R \,\big|\, g(x)\in[\bar f_n(x) - Q_{\alpha/2},\,\bar f_n(x) + Q_{1-\alpha/2}],\ \forall x\in \m X\big\}$.
\caption{Algorithm 1 (Online Bootstrap Confidence Band for Non-parametric Regression)}\label{algorithm1}
\end{algorithm}

As we demonstrated in Theorem \ref{thm:global:main1}, the sampling distribution of $\sqrt{n(n\gamma_n)^{-1/\alpha}}(\bar{f}_n - f^\ast)$ can be effectively approximated by the conditional distribution of $\sqrt{n(n\gamma_n)^{-1/\alpha}}(\bar{f}_n^b - \bar{f}_n)$ given data $\m D_n$ using the bootstrap functional SGD.  This result provides a strong foundation for conducting online statistical inference based on bootstrap. Specifically, we can run $J$ bootstrapped functional SGD in parallel, producing $J$ estimators $\bar{f}_n^{b,j} = \frac{1}{n}\sum_{i=1}^n \widehat{f}_i^{b,j}$ for $j=1, \dots, J$ with 
$$
\widehat{f}^{b,j}_i =  \widehat{f}_{i-1}^{b,j} + \gamma_n w_{i,j} (Y_i - \langle \widehat{f}^{b,j}_{i-1}, K_{X_i}\rangle_{\mathbb{H}}) K_{X_i}, \quad \mx{for}\quad i=1,2,\ldots,
$$
where $w_{i,j}$ are i.i.d.~bootstrap weights satisfying Assumption~\ref{asmp:weights}. 
Then we can approximate the sampling distribution of $(\bar{f}_n - f^\ast)$ using the empirical distribution of $\{\widehat{f}^{b,j}_n - \bar{f}_n, \, j =1, \dots, J\}$ conditioning on $\m D_n$, and further construct the point-wise confidence intervals and simultaneous confidence band for $f^\ast$.
We can also use the empirical variance of $\{\bar{f}_n^{b,j},\, j=1,\dots,J\}$ to approximate the variance of $\bar{f}_n$. Based on these quantities, we can construct the point-wise confidence interval for $f^\ast(x)$ for any fixed $x\in\m X$ in two ways: 
\begin{enumerate}
    \item Normal CI - giving the sequence of bootstrapped estimators $\bar{f}_n^{b,j}(x)$ 
      for $j=1,\dots, J$, we calculate the variance as $T_n^{b}(x)= \frac{1}{J-1}\sum_{j=1}^J \big(\bar{f}_n^{b,j}(x)- \bar{f}_n(x)\big)^2$, and construct the $100(1 - \alpha)\%$ confidence interval for $f^\ast(x)$ as $(\bar{f}_n(x) - z_{\alpha/2}\sqrt{T_n^b(x)}, \bar{f}_n(x) + z_{\alpha/2}\sqrt{T_n^b(x)})$;
    \item Percentile CI -  giving 
      the sequence of bootstrapped estimators $\bar{f}_n^{b,j}(x)$ for $j=1,\dots, J$, we calculate $\{\bar{f}_n^{b,j}(x)-\bar{f}_n(x)\}_{j=1}^J$, and its $\alpha/2$-th and $(1-\alpha/2)$-th quantiles as $C_{\alpha/2}$ and $C_{1-\alpha/2}$, then construct the $100(1 - \alpha)\%$ CI for $f^\ast(x)$ as $\big(\bar{f}_n(x)-C_{\alpha/2},\bar{f}_n(x)+  C_{1-\alpha/2} \big)$. 
\end{enumerate}

To construct the simultaneous confidence band, we first choose a dense grid points $t_1,\dots, t_M\in \cX$; then for each $j\in\{1,\dots, J\}$, we calculate $\max_{1\leq m \leq M} \big|\bar{f}_n^{b,j}(t_m)-\bar{f}_n(t_m)\big| $ to approximate $\sup_t |\bar{f}_n^{b,j}(t)-\bar{f}_n(t)|$. Accordingly, we obtain the following $J$ bootstrapped supremum norms:
\begin{equation}\label{eq:CB} 
\max_{1\leq m \leq M} \big|\bar{f}_n^{b,1}(t_m)-\bar{f}_n(t_m)\big|\ , \ \max_{1\leq m \leq M} \big|\bar{f}_n^{b,2}(t_m)-\bar{f}_n(t_m)\big|\ ,\  \dots \ ,\  \mbox{and}\ \max_{1\leq m \leq M} \big|\bar{f}_n^{b,J}(t_m)-\bar{f}_n(t_m)\big|. 
\end{equation}
Denote the sample $\alpha/2$-th and the $(1-\alpha/2)$-th quantiles of (\ref{eq:CB}) as $Q_{\alpha/2}$ and $Q_{1-\alpha/2}$. Then we construct a $100(1 - \alpha)\%$  confidence band for $f^\ast$ as 
$\big\{g:\, \m X\to\mathbb R \,\big|\, g(x)\in[\bar f_n(x) - Q_{\alpha/2},\,\bar f_n(x) + Q_{1-\alpha/2}],\ \forall x\in \m X\big\}$. 

Our online inference algorithm is computationally efficient, as it only requires one pass over the data, and the bootstrapped functional SGD can be computed in parallel. The detailed algorithm is summarized in Algorithm \ref{algorithm1}.

\section{Numerical Study}\label{sec:numerical}
In this section, we test our proposed online inference approach via simulations.
Concretely, we generate synthetic data in a streaming setting with a total sample size of $n$. We use $(X_t, Y_t)$ to represent the $t$-th observed data point for $t=1, \dots, n$. We evaluate the performance of our proposed method as described in Algorithm~\ref{algorithm1} for constructing confidence intervals for $f(x)$ at $x=X_t$ for $t=501,1000,1500,2000,2500,3000,3500,4000$, and compare our method with three
existing alternative approaches, which we refer to as ``offline" methods. ``Offline" methods involve calculating the confidence intervals after all data have been collected, up to the $t$-th observation's arrival, which necessitates refitting the model each time new data arrive. We also evaluate the coverage probabilities of the simultaneous confidence bands constructed in Algorithm~\ref{algorithm1}. We first enumerate the compared offline confidence interval methods as follows:

\begin{enumerate}[(i)]

\item Offline Bayesian confidence interval (Offline BA) proposed in \cite{wahba1983bayesian}:
According to \cite{wahba1978improper}, a smoothing spline method corresponds to a Bayesian procedure when using a partially improper prior. Given this relationship between smoothing splines and Bayes estimates, confidence intervals can be derived from the posterior covariance function of the estimation. In practice, we implement Offline BA using the ``gss" R package \cite{gu2013smoothing}. 

\item Offline bootstrap normal interval (Offline BN) proposed in \cite{wang1995bootstrap}: 
Let $\hat{f}_{\lambda}$ and $\hat{\sigma}$ denote the estimates of $f$ and $\sigma$ respectively, achieved by minimizing (\ref{eq:plr}) with $\{X_i, Y_i\}_{i=1}^t$ as below. 
\begin{equation}\label{eq:plr}
\sum_{i = 1}^{t} (Y_i - f(X_i))^2 + \frac{t}{2}\lambda \int_{0}^1 (f^{''}(u))^2 du
\end{equation}
where $\lambda$ is the roughness penalty and $f^{''}(u)$ is the second derivative evaluated at $u$. A bootstrap sample is generated from
\begin{equation*}
Y_i^\dag = \hat{f}_{\lambda}(X_i) + \epsilon^\dag,\quad  i = 1,\dots,t
\end{equation*}
where $\epsilon^\dag_i$s are i.i.d.~Gaussian white noise with variance $\hat{\sigma}^2$. Based on the bootstrap sample, we calculate the bootstrap estimate as $\hat{f}_\lambda^\dag$. Repeating $J$ times, we have a sequence of offline bootstrap estimates $\bar{f}_\lambda^{\dag,1}, \dots, \bar{f}_\lambda^{\dag,J}$. We estimate the variance of $\hat f_\lambda(X_t)$ as $T^\dag_t= \frac{1}{J-1}\sum_{j=1}^J\big(\hat{f}_\lambda^{\dag,j}(X_t)-\hat{f}_\lambda(X_t) \big)^2$. 
A $100(1 - \alpha)\%$ offline normal bootstrap confidence interval for $\hat f_\lambda(X_t)$ is then constructed as $\big(\,\hat{f}_\lambda(X_t) - z_{\alpha/2}\sqrt{T^\dag_t}, \,\hat{f}_\lambda(X_t) + z_{\alpha/2}\sqrt{T^\dag_t}\,\big)$.

\item Offline bootstrap percentile interval (Offline BP): We apply the same data bootstrapping procedure in Offline BN, which produces the estimate $\hat{f}^\dag_\lambda(X_t)$ based on the bootstrap sample. The confidence interval is then constructed using the percentile method suggested in \cite{efron1982jackknife}. Specifically, let $C^\dag_{\alpha/2}(X_t)$ and $C^\dag_{1-\alpha/2}(X_t)$ represent the $\alpha/2$-th quantile and the $(1-\alpha/2)$-th quantile of the empirical distribution of $\big\{\hat{f}_\lambda^{\dag,j}(X_t)-\hat{f}^\dag_\lambda(X_t)\big\}_{j=1}^J$, respectively. A $100(1 -\alpha)\%$ confidence interval for $\hat f_\lambda(X_t)$ is then constructed as $\big(\,\hat{f}_\lambda(X_t) -C^\dag_{\alpha/2}(X_t),\,\hat{f}_\lambda(X_t) +C^\dag_{1-\alpha/2}(X_t)\,\big)$.
\end{enumerate}

As $t$ increases, offline methods lead to a considerable increase in computational cost. For instance, Offline BA/BN theoretically has a total time complexity of order $\mathcal O(t^4)$ (with an $\mathcal O(t^3)$ cost at time $t$). In contrast, online bootstrap confidence intervals are computed sequentially as new data points become available, making them well-suited for streaming data settings. They have a theoretical complexity of at most $\mathcal O(t^2)$ (with an $\mathcal O(t)$ cost at time $t$).
We examine both the normal CI and percentile CI, as outlined in Algorithm \ref{algorithm1}, when constructing the confidence interval. 

We examine the effects of various step size schemes. Specifically, we consider a constant step size $\gamma=\gamma(t)= t^{-\frac{1}{\alpha+1}}$, where $t$ represents the total sample size at which the CIs are constructed, and an online step size $\gamma_i = i^{-\frac{1}{\alpha+1}}$ for $i=1,\dots, t$. A limitation of the constant-step size method is its dependency on prior knowledge of the total time horizon $t$. Consequently, the estimator is only rate-optimal at the $t$-th step.
We assess our proposed online bootstrap confidence intervals in four different scenarios: (i) Online BNC, which uses a constant step size for the normal interval; (ii) Online BPC, which uses a constant step size for the percentile interval; (iii) Online BNN, which employs a non-constant step size for the normal interval; and (iv) Online BPN, which utilizes a non-constant step size for the percentile interval.

We generate our data as i.i.d.~copies of random variables $(X,Y)$, where $X$ is drawn from a uniform distribution in the interval $(0,1)$, and $Y = f(X) + \epsilon$. Here $f$ is the unknown regression function to be estimated, $\epsilon$ represents Gaussian white noise with a variance of $0.2$. We consider the following three cases of $f=f_\ell$, $\ell=1,2,3$:
\begin{align*}
\mbox{Case 1:}\quad & f_1(x)= \sin (3\pi x/2), \\
\mbox{Case 2:}\quad & f_2(x) = \frac{1}{3}\beta_{10,5}(x) + \frac{1}{3}\beta_{7,7}(x) + \frac{1}{3}\beta_{5,10}(x), \\
\mbox{Case 3:}\quad & f_3(x) = \frac{6}{19}\beta_{30,17}(x) + \frac{4}{10}\beta_{3,11}(x).
\end{align*}
Here, $\beta_{p,q}= \frac{x^{p-1}(1-x)^{q-1}}{B(p,q)}$ with $B(p,q) = \frac{\Gamma(p)\Gamma(q)}{\Gamma(p+q)}$ denoting the beta function, and $\Gamma$ is the gamma function with $\Gamma(p)=p!$ when $p\in\mathbb N_+$. Cases 2 and 3 are designed to mimic the increasingly complex ``truth'' scenarios similar to the settings in \cite{wahba1983bayesian, wang1995bootstrap}.

We draw training data of size $n=3000$ from these models. In our online approaches, we first use 500 data points to build an initial estimate and then employ SGD to derive online estimates from the 501st to the 3000th data point. Given that our framework is designed for online settings, we can construct the confidence band based on the datasets of size $501$, $1000$, $1500$, $2000$, $2500$ and $3000$, i.e., using the averaged estimators $\bar{f}_t$ at $t=501,1000, 1500, 2000, 2500, 3000$.
We repeat the data generation process $200$ times for each case. For each replicate, upon the arrival of a new data point, we apply the proposed multiplier bootstrap method for online inference, using $500$ bootstrap samples (i.e., $J=500$ in Algorithm~\ref{algorithm1}) with bootstrap weight $W$ generated from a normal distribution with mean $1$ and standard deviation $1$. We then construct $95\%$ confidence intervals based on Algorithm~\ref{algorithm1}.
Our results will show the coverage and distribution of the lengths of the confidence intervals built at $t=501, 1000, 1500, 2000, 2500$, and $3000$.

\begin{figure*}
\centering 
\begin{tabular}{cc}
{\footnotesize \hspace{-80pt} {\bf Case 1:} \hspace{10pt}(A1)  Coverage }& {\hspace{-30pt}\footnotesize (A2) Length of Confidence Interval} \\
\centering
\includegraphics[width=0.49\columnwidth]{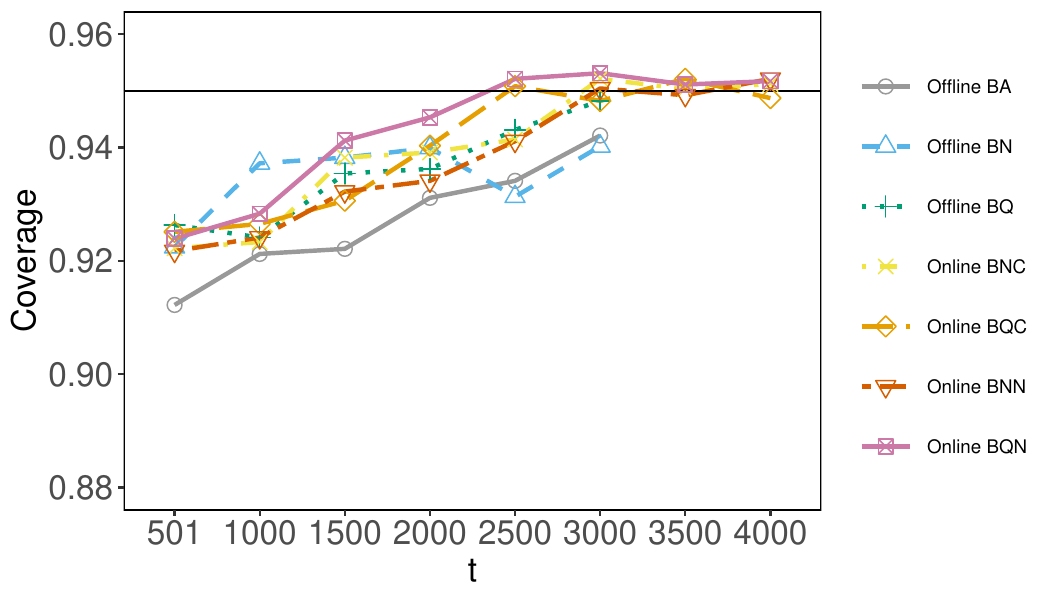} &\includegraphics[width=0.49\columnwidth]{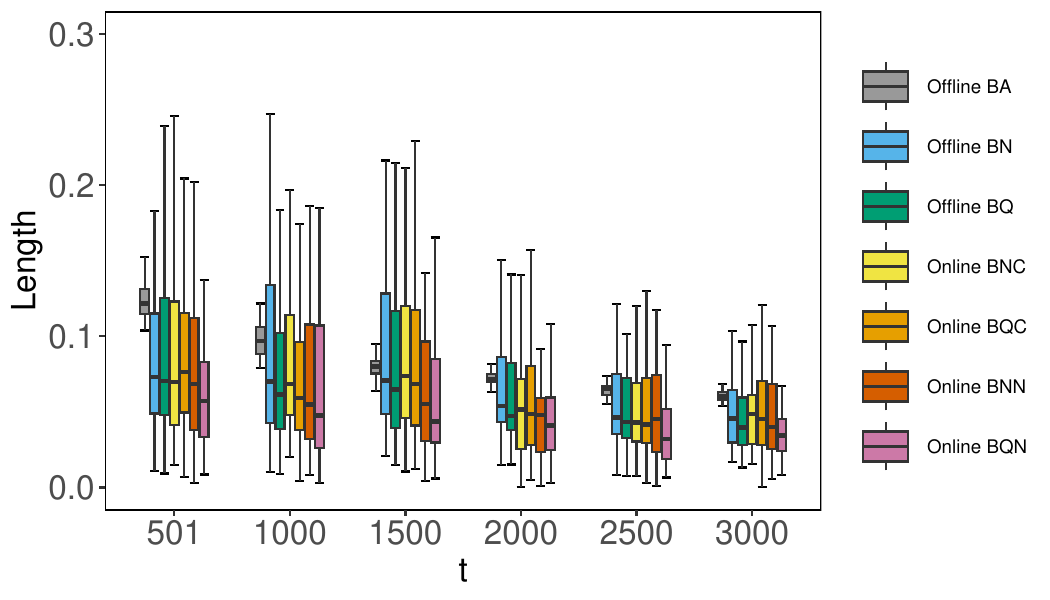}\\
{\footnotesize \hspace{-80pt} {\bf Case 2:} \hspace{10pt} (B1) Coverage }& {\hspace{-30pt}\footnotesize (B2) Length of Confidence Interval} \\
\centering
\includegraphics[width=0.49\columnwidth]{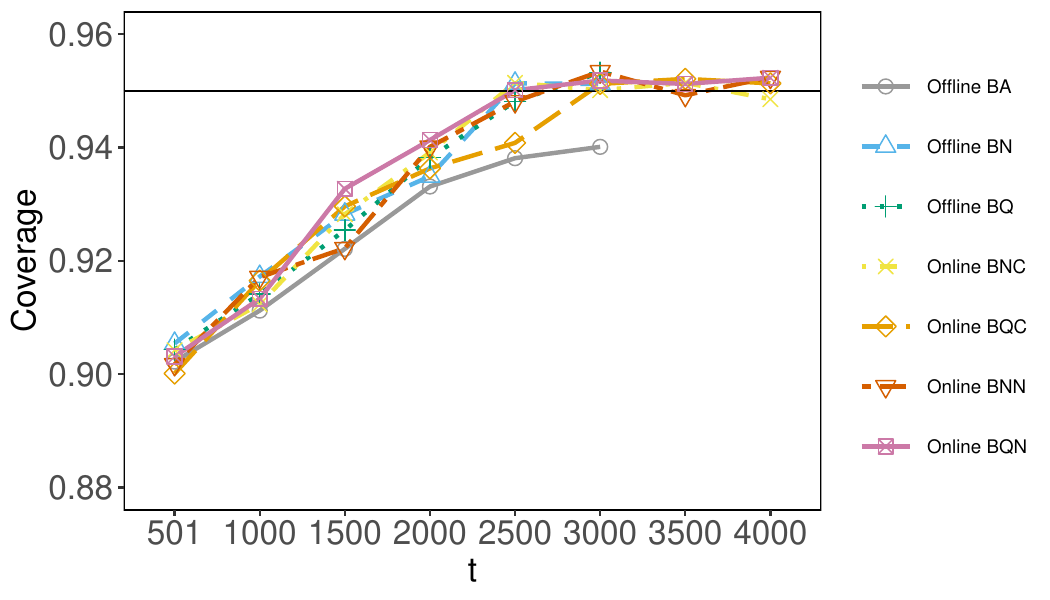} &\includegraphics[width=0.49\columnwidth]{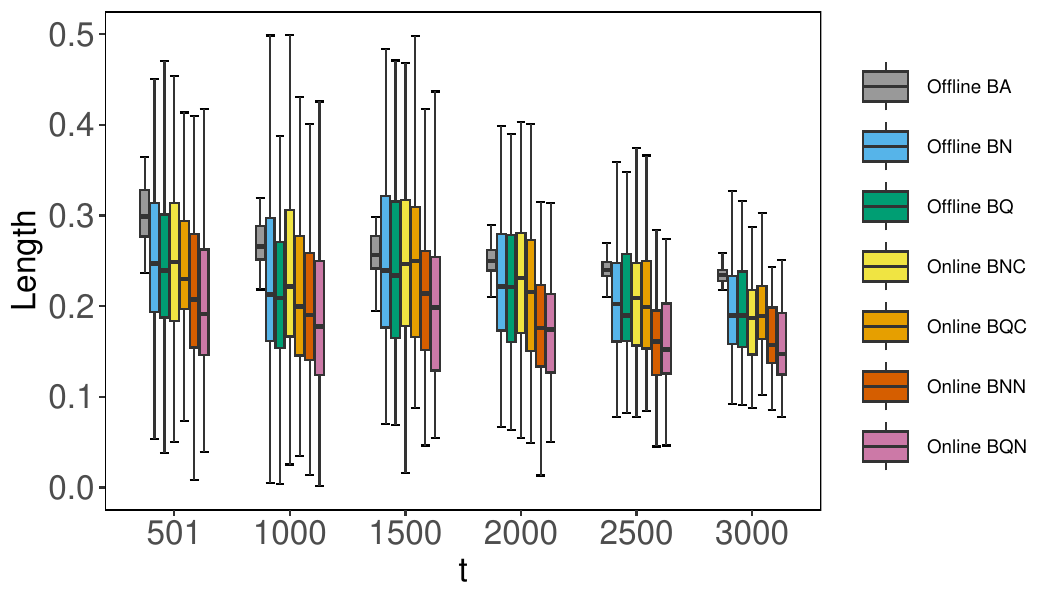}\\
{\footnotesize \hspace{-80pt} {\bf Case 3:} \hspace{10pt} (C1) Coverage }& {\hspace{-30pt}\footnotesize (C2) Length of Confidence Interval} \\
\centering
\includegraphics[width=0.49\columnwidth]{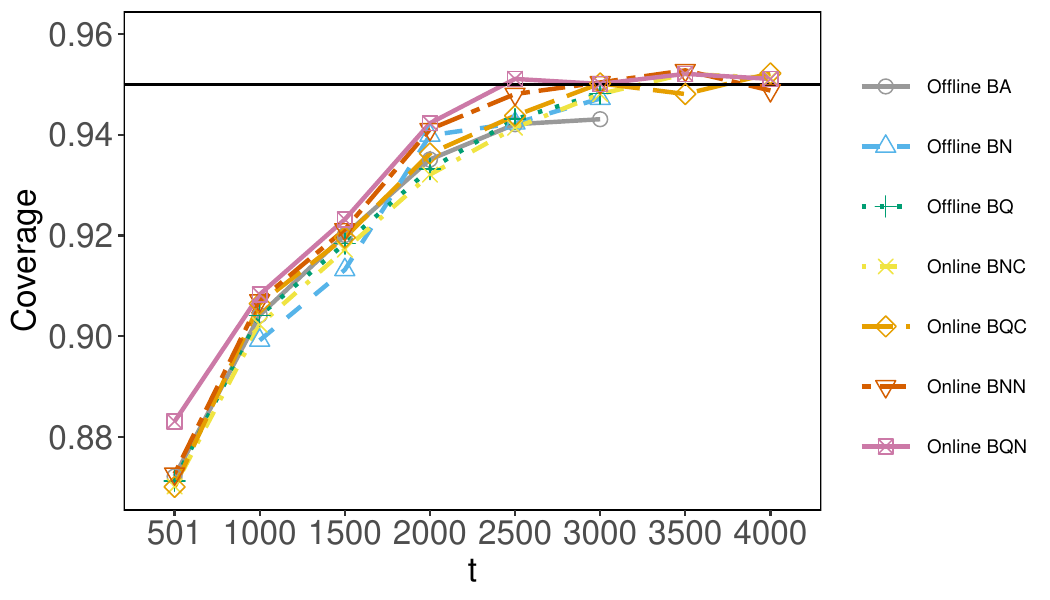} &\includegraphics[width=0.49\columnwidth]{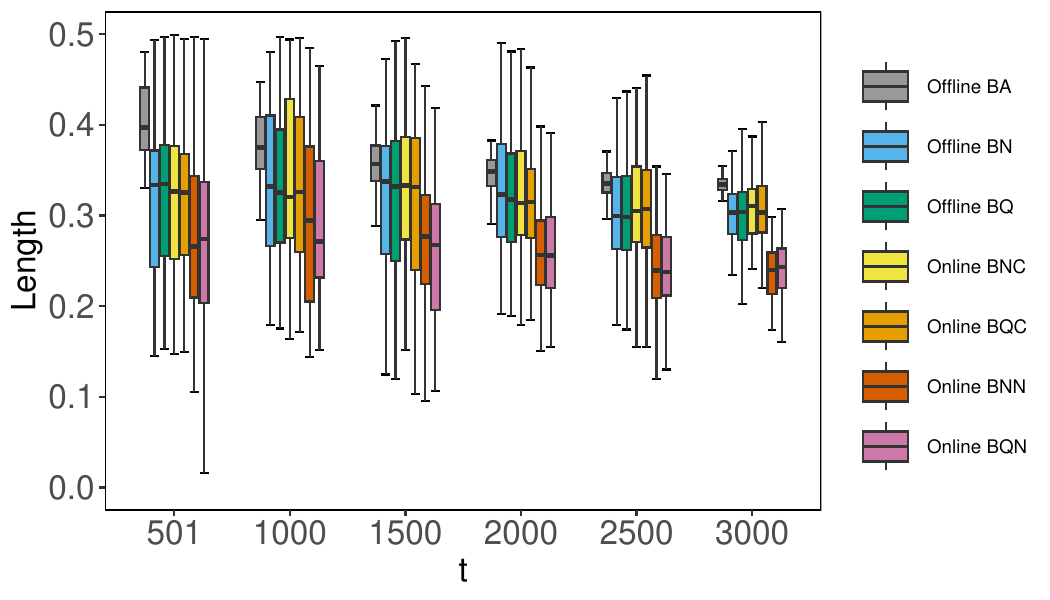}
\end{tabular}
\caption{We compared seven different CI construction approaches: Offline Bayesian confidence interval (Offline BA), Offline Bootstrap normal interval (Offline BN), Online bootstrap normal interval with constant step size (Online BNC), Online bootstrap percentile interval with constant step size (Online BPC), Online Bootstrap normal interval with non-constant step size (Online BNN), and Online Bootstrap percentile interval with Non-constant step size (Online BPN).  The coverage of the CI is shown in (A1), (B1), (C1). The mean and variance of the length of CIs are represented by the solid center and colored interval in (A2), (B2), (C2) respectively.}
\label{fig:sim1}
\end{figure*}

\begin{figure}
\centering
\includegraphics[width=0.9\textwidth]{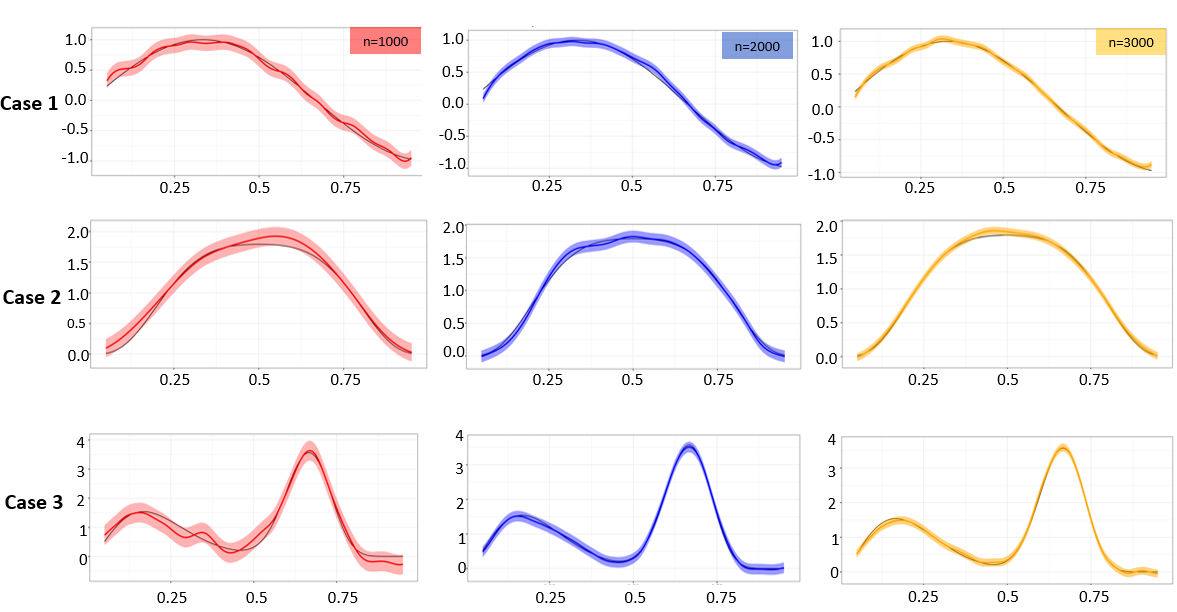}
\caption{Confidence band constructed using an online bootstrap approach with a non-constant step size. Data are generated in three cases with sample sizes of 1000 (red), 2000 (blue), and 3000 (yellow). The colored band represents the confidence band, the solid black curve is the true function curve, and the colored curve is the estimated function curve based on SGD.}
\label{fig:band}
\end{figure}

As shown in Figure \ref{fig:sim1}, the coverage of all methods approaches the predetermined level of $95\%$ as $t$ increases. The offline Bayesian method exhibits the lowest coverage of all. While it has the longest average confidence interval length in Cases 1-3, it also has the smallest variance in confidence interval lengths. The offline bootstrap-based methods demonstrate higher coverage and shorter average confidence interval lengths than the offline Bayesian method. The variance in confidence interval lengths for these bootstrap-based methods is larger, due to the bootstrap multiplier resampling procedure or the random step size used in our proposed online bootstrap procedures. As the sample size grows, the variance in the length of the confidence interval diminishes for all methods. Our online bootstrap procedure with a non-constant step size outperforms the others regarding both the average length and the variance of the confidence interval. It offers the shortest average confidence interval length and the smallest variance, compared to the Bayesian confidence interval, offline bootstrap methods, and the online bootstrap procedure with a constant step size. Moreover, the online bootstrap method with a non-constant step size achieves the predetermined coverage level of $95\%$ more quickly than the other methods. We only tested our methods (online BNN and online BQN) with an increased $t$ at $t=3500$ and $t=4000$ due to computational costs. As observed in Figure \ref{fig:sim1} (A1), (B1), and (C1), the coverage stabilizes at the predetermined coverage level of $95\%$.
We also use our proposed online bootstrap method, as outlined in Algorithm \ref{algorithm1}, to construct a confidence band of level of $95\%$ with a step size of $\gamma_i = i^{-\frac{1}{\alpha+1}}$ at $n=1000,2000,3000$. As seen in Figure \ref{fig:band}, the average width of the confidence band decreases as the sample size increases for Case 1-3, and all of them cover the true function curve represented by the solid black curve, indicating that the accuracy of our confidence band estimates improves with a larger sample size.

\begin{figure}
\centering
\begin{tabular}{cc}
{\footnotesize (A) Cumulative Computation Time }& {\footnotesize (B) Current Computation Time} \\
\includegraphics[width=0.45\columnwidth]{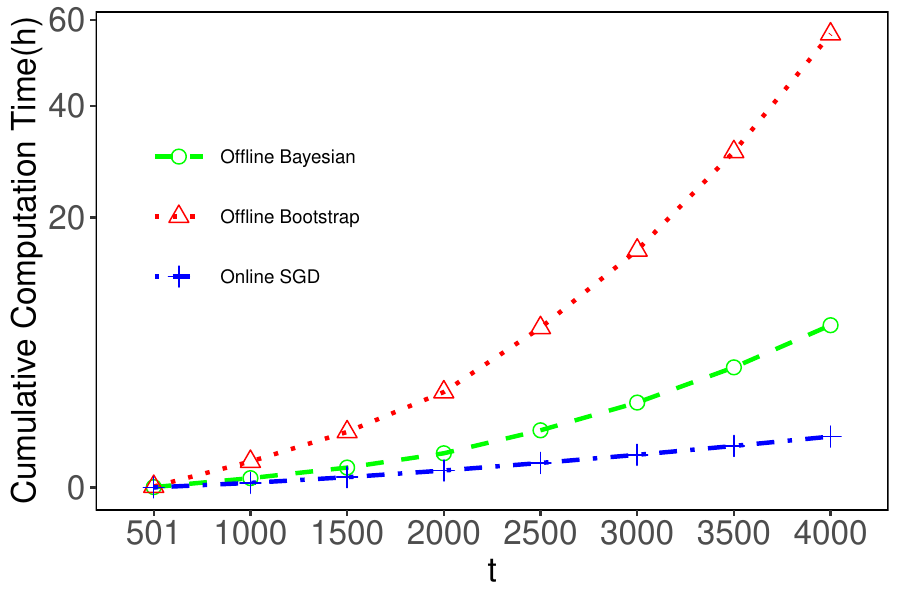}&
\includegraphics[width=0.45\columnwidth]{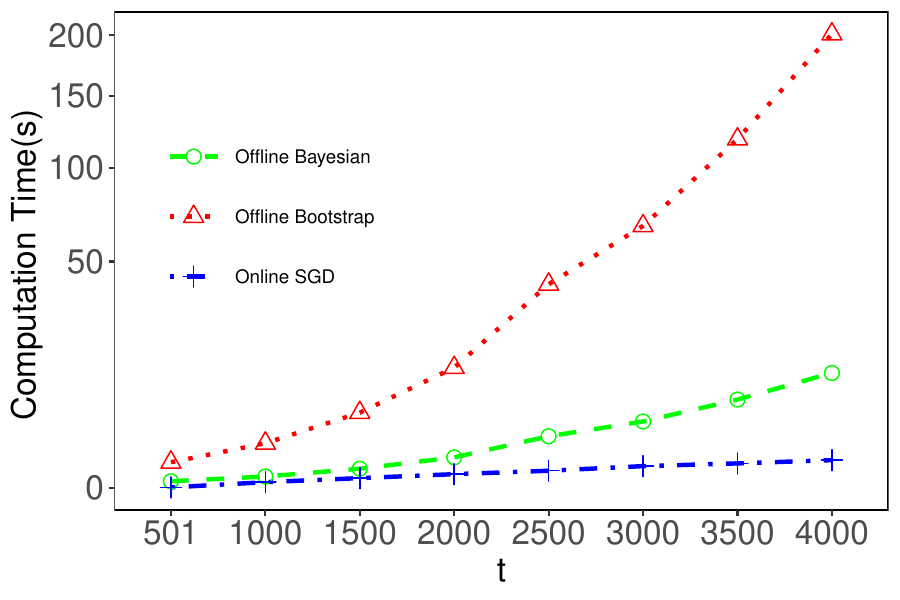}
\end{tabular}
\caption{ (A) Cumulative Computation time is recorded as $t$ increasing from $501$ to $4000$. (B) The computation time of constructing the confidence interval is recorded at different time points. The computation time is represented on the Y-axis with a scaled interval to differentiate between the blue and green curves.}
\label{fig:computing}
\end{figure}

Finally, we compared the computational time of various methods in constructing confidence intervals on a computer workstation equipped with a 32-core 3.50 GHz AMD Threadripper Pro 3975WX CPU and 64GB RAM. We recorded the computational times as data points $501, 1000, \dots, 4000$ arrived and calculated the cumulative computational times up to $t=501, 1000, \cdots, 4000$ for both offline and online algorithms. The normal and percentile Bootstrap methods displayed similar computational times, so we chose to report the computational time of the percentile bootstrap interval for both offline and online approaches. Despite leveraging parallel computing to accelerate the bootstrap procedures, the offline bootstrap algorithms still demanded significant computation time. This is attributed to the need to refit the model each time a new data point arrived, which substantially raises the computational expense. The computational complexity of offline methods for computing the estimate of $f$ at time $t$ is $\mathcal O(t^3)$, leading to a cumulative computational complexity of order $\mathcal O(t^4)$. Including the bootstrap cost, the total computational complexity at $t$ becomes $\mathcal O(Bt^3)$, leading to a cumulative computational complexity of order $\mathcal O(Bt^4)$. As shown in Figure \ref{fig:computing}, the cumulative computational time reaches approximately $60$ hours for the offline bootstrap method and around $8$ hours for the Bayesian bootstrap method. Conversely, the cumulative computational time for our proposed bootstrap method grows almost linearly with $t$, and requires less than $30$ minutes up to $t=4000$. At $t=4000$, offline bootstrap methods take about $200$ seconds, and the Bayesian confidence interval necessitates roughly $30$ seconds to construct the confidence interval. Our proposed online bootstrap method requires fewer than $3$ seconds, demonstrating its potential for time-sensitive applications such as medical diagnosis and treatment, financial trading, and traffic management, where real-time decision-making is essential as data continuously flows in.

\section{Proof Sketch of Main Results}\label{sec:proof_sketch}
In this section, we present sketched proofs for the expansion of the functional SGD estimator relative to the supremum norm (Theorem~\ref{le:bias_variance}) and the bootstrap consistency for global inference (Theorem~\ref{thm:global:main1}), while highlighting some important technical details and key steps.

\subsection{Proof sketch for estimator expansion under supremum norm metric}\label{subsec:sketch1}
Theorem \ref{le:bias_variance} establishes the supreum norm bound with high probability for the high-order expansion of the SGD estimator. This result is crucial for the inference framework, as we only need to focus on the distribution behavior of leading terms given the negligible remainders. 
In the sketched proof, we denote $\eta_n = \widehat{f}_{n}- f^\ast$. According to (\ref{eq:sgd:recursion_f0}), we have 
\begin{equation*} 
\eta_n = (I - \gamma_n K_{X_n}\otimes K_{X_n}) \eta_{n-1} + \gamma_n \epsilon_n K_{X_n}. 
\end{equation*}
We split the recursion of $\eta_n$ into two finer recursions: bias recursion of $\eta_n^{bias}$ and noise recursion of $\eta_n^{noise}$ such that $\eta_n = \eta_n^{bias} + \eta_n^{noise}$, where
\begin{align*}
    \mbox{bias recursion:} \qquad &\eta_n^{bias} =  (I - \gamma_n K_{X_n}\otimes K_{X_n}) \eta^{bias}_{n-1}  \quad \quad \textrm{with} \quad \eta_0^{bias}= -f^*;\\
    \mbox{noise recursion:} \qquad &
    \eta^{noise}_n = (I - \gamma_n K_{X_n}\otimes K_{X_n}) \eta^{noise}_{n-1} + \gamma_n \epsilon_n K_{X_n} \quad \quad \textrm{with} \quad \eta_0^{noise}= 0.
\end{align*}
To proceed, we further decompose the bias recursion into two parts: (1) the leading bias recursion $\eta_n^{bias,0}$; and (2) the remainder bias recursion $\eta_n^{bias}-\eta_n^{bias,0}$ as follows:  
\begin{align*}
 \eta_n^{bias,0}= &\, (I - \gamma_n \Sigma) \eta_{n-1}^{bias,0}  \quad \quad \textrm{with} \quad \eta_0^{bias}= -f^*; \\
 \eta_n^{bias} - \eta_n^{bias,0} = &\, (I- \gamma_n K_{X_n}\otimes K_{X_n}) (\eta_{n-1}^{bias} - \eta_{n-1}^{bias,0}) + \gamma_n (\Sigma -   K_{X_n}\otimes K_{X_n} )\eta_{n-1}^{bias,0}.
\end{align*}
It is worth noting that the leading bias recursion essentially replaces $K_{X_n}\otimes K_{X_n}$ by its expectation $\Sigma = \EE [K_{X_n}\otimes K_{X_n}]$.

To bound the residual term $\| \bar{\eta}_n^{bias} - \bar{\eta}_n^{bias,0}\|_\infty$ associated with the leading bias term of the averaged estimator, we introduce an augmented RKHS space (with $a\in[0,\, 1/2-1/(2\alpha)]$)
\begin{equation}\label{eqn:augmented}
    \mathbb{H}_a = \Big\{f= \sum_{\nu=1}^\infty f_\nu \phi_\nu \,\mid \,\sum_{\nu=1}^\infty f_\nu^2 \mu_\nu^{2a-1}< \infty\Big\}
\end{equation}
equipped with the kernel function $K^a(x,y)= \sum_{\nu=1}^\infty \phi_\nu(X)\phi_\nu(y)\mu_\nu^{1-2a}$. To verify $K^a(\cdot,\cdot)$ is the reproducing kernel of $\mathbb{H}_a$, we notice that
$$
\|K_x^a\|_a^2 = \|\sum_{\nu=1}^\infty \mu_\nu^{1-2a}\phi_\nu(x)\phi_\nu \|_a^2 = \sum_{\nu=1}^\infty (\phi_\nu(x)\mu_\nu^{1-2a})^2 \mu_\nu^{2a-1} = \sum_{\nu=1}^\infty \phi^2_\nu(x)\mu_\nu^{1-2a} < c^2_a,
$$
where $c_a$ is a constant. Moreover, $K_x^a(\cdot)$ also satisfies the reproducing property since
\begin{align*}
\langle K_x^a, f \rangle_a = &\langle \sum_{\nu=1}^\infty \mu_\nu^{1-2a}\phi_\nu(x)\phi_\nu, f \rangle_a =  \sum_{\nu=1}^\infty \phi_\nu(x)\mu_\nu^{1-2a} f_\nu \langle \phi_\nu, \phi_\nu \rangle_a = \sum_{\nu=1}^\infty f_\nu \phi_\nu(x) = f(x). 
\end{align*}
For any $f\in \cH \subset \mathbb{H}_a$, we can use the above reproducing property to bound the supremum norm of $f$ as
$\|f\|_{\infty} =\sup_{x\in[0,1]}|f(x)| = |\langle K_x^a, f \rangle_a| \leq \|f\|_a \cdot\|K_x^a\|_a^2< c_a \|f\|_a$.
Also note that 
for any $f\in \cH$, $ \|f\|^2_{\cH}= \sum_{\nu=1}^\infty f_\nu^2 \mu_\nu^{-1} \leq \sum_{\nu=1}^\infty f_\nu^2 \mu_\nu^{2a-1}= \|f\|^2_{a}$ for $a\geq 0$; therefore, we have the relationship $\|f\|_{\infty}\leq c_a \|f\|_{a} \leq c_k \|f\|_{\cH}$, meaning that $\|\cdot\|_a$ provides a tighter bound for the supremum norm compared with $\|\cdot\|_\cH$. In Section \ref{app:le:rem_bias:con} (Lemma \ref{app:le:bias_rem:con}), we use this augmented RKHS to show that the bias remainder term satisfies $\|\bar{\eta}_n^{bias}-\bar{\eta}_n^{bias,0}\|^2_{\infty} = o(\bar{\eta}_n^{bias,0})$ through computing the expectation $\EE\big[\|\bar{\eta}_n^{bias}-\bar{\eta}_n^{bias,0}\|^2_{a}\big]$ and applying the Markov inequality.

For the noise recursion of $\eta_n^{noise}$, we can similarly split it into the leading noise recursion term and residual noise recursion term as 
\begin{align*}
\eta_n^{noise,0} = &\, (I - \gamma_n\Sigma) \eta^{noise,0}_{n-1} + \gamma_n \epsilon_n K_{X_n} \quad \quad \textrm{with}\quad \eta_{0}^{noise, 0}=0;\\
\eta_n^{noise} - \eta_n^{noise,0} = & \, (I - \gamma_n K_{X_n}\otimes K_{X_n}) (\eta_{n-1}^{noise} - \eta_{n-1}^{noise,0})
+ \gamma_n (\Sigma - K_{X_n}\otimes K_{X_n} ) \eta_{n-1}^{noise, 0}.
\end{align*}
The leading noise recursion is described as a ``semi-stochastic'' recursion induced by $\eta_n^{noise}$ in \cite{Bach2016} since it keeps the randomness in the noise recursion $\eta_n^{noise}$ due to the noise $\{\epsilon_i\}_{i=1}^n$, but get ride of the randomness arising from $K_{X_n}\otimes K_{X_n}$, which is due to the random design $\{X_i\}_{i=1}^n$. 

For the residual noise recursion, directly bound $\|\bar{\eta}_n^{noise} - \bar{\eta}_n^{noise,0}\|_\infty$ is difficult. Instead, we follow~\cite{Bach2016} 
by further decomposing $\eta_n^{noise} - \eta_n^{noise,0}$ into a sequence of higher-order ``semi-stochastic'' recursions as follows. 
We first define a semi-stochastic recursion induced by $\eta_n^{noise} - \eta_n^{noise,0}$, denoted as $\eta_n^{noise,1}$: 
\begin{equation}
\eta_n^{noise,1} = (I-\gamma_n \Sigma)\eta_{n-1}^{noise,1} + \gamma_n (\Sigma -  K_{X_n}\otimes K_{X_n})\eta_{n-1}^{noise,0}.\label{eq:noise_1:recursion}
\end{equation}
Here, $\eta_n^{noise,1}$ replaces the random operator $K_{X_n}\otimes K_{X_n}$ with its expectation $\Sigma$ in the residual noise recursion for $\eta_n^{noise} - \eta_n^{noise,0}$, and can be viewed as a second-order term in the expansion of the noise recursion, or the leading remainder noise term. 
The rest noise remainder parts can be expressed as  
\begin{align*}
&\, \eta_n^{noise} - \eta_n^{noise,0} -\eta_n^{noise,1} 
= (I -\gamma_n  K_{X_n}\otimes K_{X_n}) (\eta_{n-1}^{noise}-\eta_{n-1}^{noise,0}) - (I-\gamma_n \Sigma)\eta_{n-1}^{noise,1}\\
&\qquad =  (I-\gamma_n K_{X_n}\otimes K_{X_n})(\eta_{n-1}^{noise}-\eta_{n-1}^{noise,0}-\eta_{n-1}^{noise,1}) + \gamma_n (\Sigma - K_{X_n}\otimes K_{X_n})\eta_{n-1}^{noise,1}.
\end{align*}
Then we can further define a semi-stochastic recursion induced by $\eta_n^{noise} - \eta_n^{noise,0} -\eta_n^{noise,1}$, and repeat this process.
If we define $\mathcal{E}_n^{r} = (\Sigma -  K_{X_n}\otimes K_{X_n})\,\eta_{n-1}^{noise,r-1}$ for $r\geq 1$, then we can expand $\eta_n^{noise}$ into $(r+2)$ terms as
$$
\eta_n^{noise} = \eta_n^{noise,0} + \eta_n^{noise,1} + \eta_n^{noise,2} +\cdots + \eta_n^{noise,r} + \textrm{Remainder},
$$
where $\eta_n^{noise,d} = (I-\gamma_n \Sigma)\eta_{n-1}^{noise,d} + \gamma_n \mathcal{E}_n^{d}$ for $1\leq d\leq r$,. The Remainder term $\eta_n^{noise}-\sum_{d=0}^r\eta_n^{noise,d}$ also has a recursive characterization: 
\begin{equation}\label{eq:remainder:recursion}
\eta_n^{noise} - \sum_{d=1}^r \eta_n^{noise,i} = (I -\gamma_n  K_{X_n}\otimes K_{X_n})(\eta_{n-1}^{noise} - \sum_{d=1}^r \eta_{n-1}^{noise,d}) + \gamma_n \mathcal{E}_n^{r+1}. 
\end{equation}

To establish the supreme norm bound of $\bar{\eta}_n^{noise} - \bar{\eta}_n^{noise,0}$, the idea is to show that $\bar{\eta}_n^{noise,r}$ decays as $r$ increases, that is, to prove
$$
\bar{\eta}_n^{noise} = \bar{\eta}_n^{noise,0} + \underbrace{\bar{\eta}_n^{noise,1}}_{=o(\bar{\eta}_n^{noise,0})} + \underbrace{\bar{\eta}_n^{noise,2}}_{=o(\bar{\eta}_n^{noise,1})} + \dots + \underbrace{\bar{\eta}_n^{noise,r}}_{=o(\bar{\eta}_n^{noise,r-1})}+ \underbrace{ \bar{\eta}_n^{noise} - \sum_{i=0}^r \bar{\eta}_n^{noise,i}}_{negligible}.
$$
Concretely, we consider the constant-step case for a simple presentation. By accumulating the effects of the iterations, we can further express $\eta_n^{noise,1}$ as 
\begin{align*} 
\eta_n^{noise,1} = &\gamma \sum_{i=1}^{n-1} (I-\gamma\Sigma)^{n-i-1}\big(\Sigma - K_{X_{i+1}}\otimes K_{X_{i+1}}\big)\eta_i^{noise,0}\\
= & \gamma^2 \sum_{i=1}^{n-1}\sum_{j=1}^i \epsilon_j (I-\gamma\Sigma)^{n-i-1}\big(\Sigma - K_{X_{i+1}}\otimes K_{X_{i+1}}\big)(I-\gamma\Sigma)^{i-j}K_{X_j}, 
\end{align*}
and accordingly, the averaged version is
$$
\bar{\eta}_n^{noise,1} = \frac{1}{n} \sum_{j=1}^{n-1}\, \epsilon_j \cdot \underbrace{\gamma^2 \,\Big[\sum_{i=j}^{n-1}(\sum_{\ell=i}^{n-1}(I-\gamma\Sigma)^{\ell-i})\big(\Sigma - K_{X_{i+1}}\otimes K_{X_{i+1}}\big)(I-\gamma\Sigma)^{i-j}K_{X_j}\Big]}_{g_j}. 
$$
This implies that conditioning  on the covariates $\{X_1,\dots, X_n\}$, the empirical process $\bar{\eta}_n^{noise,1}(\cdot) = \frac{1}{n}\sum_{j=1}^{n-1} \epsilon_j\cdot g_j (\cdot)$ over $[0,1]$ is a Gaussian process with (function) weights $\{g_j\}_{j=1}^n$. We can then prove a bound of $\|\bar{\eta}_n^{noise,1}\|_{\infty}$ by careful analyzing the random function $\sum_{j=1}^{n-1} g_j^2(\cdot)$; see Appendix \ref{app:le:rem_var:con} for further details. 
A complete proof of Theorem~\ref{le:bias_variance} under constant step size is included in \cite{liu2023supp}; see Figure~\ref{float_chart} for a float chart explaining the relationship among different components in its proof. The proof for the non-constant step size case is conceptually similar but is considerably more involved, requiring a much more refined analysis of the accumulated step size effect on the iterations of the recursions in \cite{liu2023supp}.

\begin{figure}
\centering
\begin{tikzpicture}[node distance=3cm, auto]
\def\arrowlength{1}
\node (start) {$\bar{\eta}_n= \bar{f}_n - f^*$};
  \node[below left of=start, node distance=4.5cm] (bias) {$\bar{\eta}_n^{bias}$};
  \node[below right of=start, node distance=4.5cm] (noise) {$\bar{\eta}_n^{noise}$};
   \node[below right of=bias] (bias1) {$\bar{\eta}_n^{bias,0}$};
    \node[below left of=bias] (bias2) {$\bar{\eta}_n^{bias}-\bar{\eta}_n^{bias,0}$};
     \node[below left of=noise,node distance=4cm] (noise1) {$\bar{\eta}_n^{noise,0}$};
       \node[below of=noise] (noise2) {$\bar{\eta}_n^{noise,1}$};
    \node[below right of=noise,node distance=4cm] (noise3) {$\bar{\eta}_n^{noise}-\bar{\eta}_n^{noise,0}-\bar{\eta}_n^{noise,1}$};
    \draw[->] (start) -- node[midway,above,sloped]  {bias} (bias);
  \draw[->] (start) -- node[midway,above,sloped]   {noise} (noise);
\draw[->] (bias) -- node [midway,above,sloped]  {\small Section \ref{append:proof:lemma:bias_variance:1}} node[midway,below,sloped] {lead} (bias1);
\draw[->] (bias) -- node[midway,above,sloped] {\small Lemma \ref{app:le:bias_rem:con}} node[midway,below,sloped] {rem}(bias2);
\draw[->] (noise) -- node[midway,above,sloped] {\small Section \ref{append:proof:lemma:bias_variance:1}} node[midway,below,sloped] {lead} (noise1);
\draw[->] (noise) -- node[midway,above,sloped] {\small Lemma \ref{app:le:noise_rem:con}} node[midway,below,sloped] {rem} (noise2);
\draw[->] (noise) -- node[midway,above,sloped] {\small Lemma \ref{app:le:noise_rem:con}} node[midway,below,sloped] {rem} (noise3);
\end{tikzpicture}
\caption{Float chart for proof of Theorem~\ref{le:bias_variance} under constant step size.} \label{float_chart}
\end{figure}
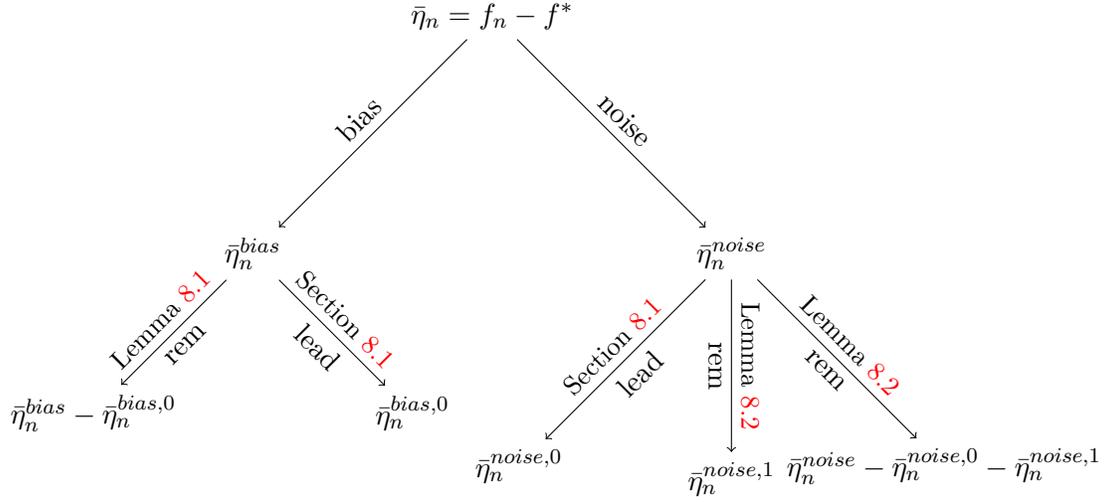

\subsection{Proof sketch for Bootstrap consistency of global inference}\label{sec:sketch:pf:GA}
Recall that $\mathcal{D}_n = \{X_i, Y_i\}_{i=1}^n$ represents the data. The goal is to bound the difference between the sampling distribution of $\sqrt{n(n\gamma)^{-1/\alpha}}\, \|\bar{\eta}_n^{noise,0}\|_\infty$ and the conditional distribution of $ \sqrt{n(n\gamma)^{-1/\alpha}}\, \|\bar{\eta}_n^{b,noise,0} - \bar{\eta}_n^{noise,0}\|_\infty$ given $\mathcal{D}_n$; see Section \ref{sec:bs-consistency} for detailed definitions of these quantities. We sketch the proof idea under the constant step size scheme. 

We will use the shorthand $\bar{\alpha}_n  = \sqrt{n(n\gamma)^{-1/\alpha}} \,\bar{\eta}_n^{noise,0}$ and $\bar{\alpha}_n^b = \sqrt{n (n\gamma)^{-1/\alpha}} \,\big(\bar{\eta}_n^{b,noise,0}- \bar{\eta}_n^{noise,0}\big)$. Recall that from equations~\eqref{eq:local_noise} and~\eqref{eq:boot:noise}, we have 
\begin{align*}
   \bar{\alpha}_n (\cdot)= \frac{1}{\sqrt{n(n\gamma)^{1/\alpha}}}\sum_{i=1}^n \epsilon_i \cdot \Omega_{n,i}(\cdot)    \quad\mbox{and}\quad
    \bar{\alpha}_n^b(\cdot) =   \frac{1}{\sqrt{n(n\gamma)^{1/\alpha}}}\sum_{i=1}^n (w_i-1)\cdot \epsilon_i \cdot \Omega_{n,i}(\cdot). 
\end{align*} 
From this display, we see that for any $t\in \cX$, $\bar{\alpha}_n (t)$ is a weighted sum of Gaussian random variables, with the weights being functions of covariates $\{X_i\}_{i=1}^n$; conditioning on $\mathcal{D}_n$, $\bar{\alpha}_n^b (t)$ is a weighted sum of sub-Gaussian random variables. In the proof, we also  require a sufficiently dense space discretization given by $0=t_1<t_2<\cdots<t_N=1$. This discretization forms an $\varepsilon$-covering for some $\varepsilon$ with respect to a specific distance metric that will be detailed later. 

To bound the difference between the distribution of $\|\bar{\alpha}_n\|_{\infty}$ and the conditional distribution of $\|\bar{\alpha}_n^b\|_{\infty}$ given $\mathcal{D}_n$, we introduce two intermediate processes: (1) 
$\bar{\alpha}_n^e (\cdot) =  \frac{1}{\sqrt{n(n\gamma)^{1/\alpha}}}\sum_{i=1}^n e_i \cdot \epsilon_i \cdot \Omega_{n,i}(\cdot)$ with  $e_i$ being i.i.d.~standard normal random variables for $i=1,\cdots, n$;  
(2) an $N$-dimensional multivariate normal random vector $\big(\bar{Z}_n (t_k) =  \frac{1}{\sqrt{n}}\sum_{i=1}^n Z_{i}(t_k),\, k=1,2,\ldots,N\big)$ (recall that $0=t_1<t_2<\cdots<t_N=1$ is the space discretization we defined earlier), where $\big\{\big(Z_1(t_1), Z_1(t_2),\ldots,Z_1(t_N)\big)\big\}_{i=1}^n$ are i.i.d.~(zero mean) normally distributed random vectors having the same covariance structure as $\big(\bar{\alpha}_n (t_1),\bar{\alpha}_n (t_2),\ldots,\bar{\alpha}_n (t_N))$; that is, $Z_{i}(t_k) \sim N \big(0, (n\gamma)^{-1/\alpha} \sum_{\nu=1}^\infty (1-(1-\gamma \mu_\nu)^{n-i})^2\phi_\nu^2(t_k)\big)$,
$\EE \big(Z_{i}(t_k)\cdot Z_{i}(t_\ell)\big) =(n\gamma)^{-1/\alpha} \sum_{\nu=1}^\infty (1-(1-\gamma \mu_\nu)^{n-i})^2\phi_\nu(t_k)\phi_\nu(t_\ell)$, and $\EE  \big(Z_{i}(t_k)\cdot Z_{j}(t_\ell)\big) = 0 $ for $(k,\ell)\in[N]^2$ and $(i,j)\in[n]^2$, $i\neq j$. These two intermediate processes are introduced so that the conditional distribution of $\max_{1\leq j\leq N} \bar{\alpha}_n^e(t_j)$ given $\mathcal{D}_n$ will be used to approximate the conditional distribution of $\|\bar{\alpha}_n^b\|_{\infty}$ given $\mathcal{D}_n$; while the distribution of $\max_{1\leq j\leq N} \bar{Z}_n (t_j)$ will be used to approximate the distribution of $\|\bar{\alpha}_n\|_{\infty}$. Since both the distribution of $\big(\bar{Z}_n (t_1),\bar{Z}_n (t_2),\ldots,\bar{Z}_n (t_N)\big)$ and  the conditional distribution of $\big(\bar{\alpha}_n^e (t_1),\bar{\alpha}_n^e (t_2),\ldots,\bar{\alpha}_n^e (t_N)\big)$ given $\mathcal{D}_n$ are centered multivariate normal distributions, we can use a Gaussian comparison inequality to bound the difference between them by bounding the difference between their covariances.

\tikzstyle{startstop} = [rectangle, rounded corners, 
minimum width=3cm, 
minimum height=1cm,
text centered, 
draw=black, 
fill=white!30]

\tikzstyle{arrow} = [thick,->,>=stealth]

\begin{figure}
  \centering
\begin{tikzpicture}[node distance=4cm]

\node (step1) [startstop] {$\|\bar{\alpha}_n\|_\infty$};
\node (step2) [startstop, right of=start, xshift=2cm] {$\underset{1\leq j\leq N}{\max}\bar{\alpha}_n(t_j)$};
\node (step3) [startstop, right of=step2, xshift=2cm] {$\underset{1\leq j\leq N}{\max}\bar{Z}_n(t_j)$};
\node (step4) [startstop, below of=step3] {$\underset{1\leq j\leq N}{\max}\bar{\alpha}^e_n(t_j) \mid \mathcal{D}_n$};
\node (step5) [startstop, below of=step2] {$\underset{1\leq j\leq N}{\max}\bar{\alpha}^b_n(t_j) \mid \mathcal{D}_n$};
\node (step6) [startstop, below of=step1] {$\|\bar{\alpha}^b_n\|_\infty \mid \mathcal{D}_n$};
\draw [arrow] (step1) -- (step2) node[midway, above] {\RNum{1}} node[midway, below] {};
\draw [arrow] (step2) -- (step3) node[midway, above] {\RNum{2}} node[midway, below] {Lemma \ref{app:le:GP:atoz:con}};
\draw [arrow] (step3) -- (step4) node[midway, left] {\RNum{3}} node[midway, right] {Lemma \ref{app:le:GP:ztoe:con}};
\draw [arrow] (step4) -- (step5) node[midway, above] {\RNum{4}} node[midway, below] {Lemma \ref{app:le:GP:etob:con}};
\draw [arrow] (step5) -- (step6) node[midway, above] {\RNum{5}} node[midway, below]{};
\draw [arrow] (step6) -- (step1) node[midway, left] {\RNum{6}} node[midway, right] {};
\end{tikzpicture}
  \caption{Flow chart of the bootstrap consistency proof.}
  \label{fig:flowchart:GA}
\end{figure}
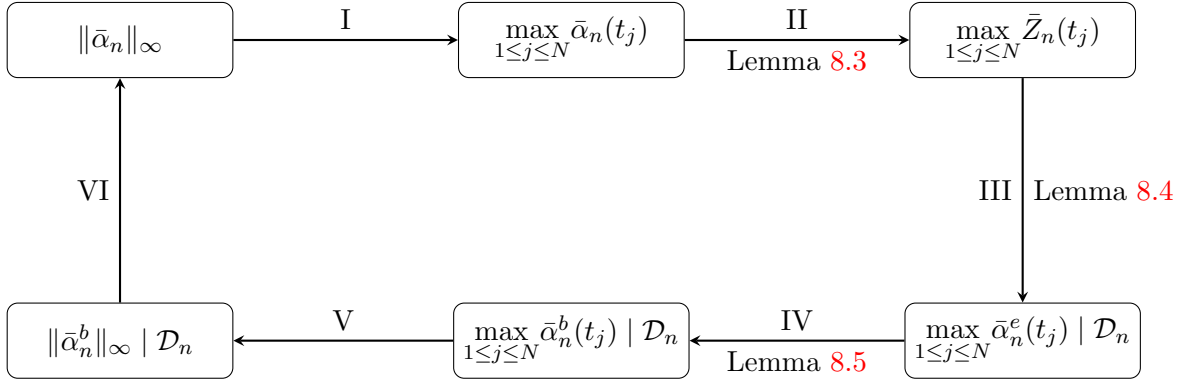

The actual proof is even more complicated, as we also need to control the discretization error. See Figure \ref{fig:flowchart:GA} for a flow chart that summarizes all the intermediate approximation steps and the corresponding lemmas in the appendix.
For Steps \RNum{1} and \RNum{5} in Figure \ref{fig:flowchart:GA}, we approximate the continuous supremum norms of $\bar{\alpha}_n$ and $\bar{\alpha}_n^b$ by the finite maximums of $\big(\bar{\alpha}_n(t_1),\ldots, \bar{\alpha}_n(t_N)\big)$ and $\big(\bar{\alpha}_n^b(t_1),\ldots, \bar{\alpha}_n^b(t_N)\big)$, respectively. Here, $N$ is chosen as the $\varepsilon$-covering number of the unit interval $[0,1]$ 
with respect to the metric defined by $e_P^2(t,s)= \EE \big[\big(\bar{\alpha}_n(t)-\bar{\alpha}_n(s)\big)^2\big]$ for $(t,s)\in[0,1]^2$; that is, there exist $t_1, \dots, t_N \in [0,1]$, such that for every $t\in [0,1]$, there exist $1\leq j \leq N$ with $e_P(t, t_j) < \varepsilon $. We refer a detailed proof in Step \RNum{1} (Step \RNum{5}) to Supplementary. Notice that $\bar{\alpha}_n$ is a weighted and non-identically distributed empirical process. In Step \RNum{2}, we further develop Gaussian approximation bounds to control the Kolmogorov distance between the sampling distributions of $\max_{1\leq j \leq N} \bar{\alpha}_n(t_j)$ and the distribution of $\max_{1\leq j \leq N} \bar{Z}_n(t_j)$; see the proof in Lemma \ref{app:le:GP:atoz:con}. In Step \RNum{4}, by noticing that conditional on $\mathcal{D}_n$, $\bar{\alpha}_n^b$ is a weighted and non-identically distributed sub-Gaussian process with randomness coming from the Bootstrap multiplier $\{w_i\}_{i=1}^n$, we adopt a similar argument as in Step \RNum{2} to bound the Kolmogorov distance between the distributions of $\max_{1\leq j \leq N} \bar{\alpha}_n^e(t_j)$ and $\max_{1\leq j \leq N} \bar{\alpha}_n^b(t_j)$ given $\mathcal D_n$. 
 
\section{Discussion}\label{sec:dis}
Quantifying uncertainty (UQ) in large-scale streaming data is a central challenge in statistical inference. We are developing multiplier bootstrap-based inferential frameworks for UQ in online non-parametric least squares regression. We propose using perturbed stochastic functional gradients to generate a sequence of bootstrapped functional SGD estimators for constructing point-wise confidence intervals (local inference) and simultaneous confidence bands (global inference) for function parameters in RKHS. Theoretically, we establish a framework to derive the non-asymptotic law of the infinite-dimensional SGD estimator and demonstrate the consistency of the multiplier bootstrap method.

This work assumes that random errors in non-parametric regression follow a Gaussian distribution. However, in many real-world applications, heavy-tailed distributions are more common and suitable for capturing outlier behaviors. One future research direction is to expand the current methods to address heavy-tailed errors, thereby offering a more robust approach to online non-parametric inference. Another direction to explore is the generalization of the multiplier bootstrap weights to independent sub-exponential random variables and even exchangeable weights. Finally, a promising direction is the consideration of online non-parametric inference for dependent data. Such an extension is necessary to address problems like multi-arm bandit and reinforcement learning, where data dependencies are frequent and real-time updates are essential. Adapting our methods to these problems could provide deeper insights into the interplay between statistical inference and online decision-making.

\section{Some Key Proofs} \label{sec:key_proof}
\subsection{Proof of leading terms in Theorem~\ref{le:bias_variance} in constant step size case }\label{append:proof:lemma:bias_variance:1}

Recall in Section \ref{subsec:sketch1}, we split the recursion of $\eta_n= \widehat{f}_n - f^* $ into the bias recursion and noise recursion. That is, $\eta_n = \eta_n^{bias} + \eta_n^{noise}$. Here $\eta_n^{bias}$ can be further decomposed as its leading bias term $\eta_n^{bias,0}$ and remainder $\eta_n^{bias}-\eta_n^{bias,0}$ satisfying the recursion 
\begin{align}
 \eta_n^{bias,0}= & (I - \gamma_n \Sigma) \eta_{n-1}^{bias,0} \quad \quad \textrm{with} \quad \eta_0^{bias,0}=f^\ast \label{eq:bias:lead}\\
 \eta_n^{bias} - \eta_n^{bias,0} = & (I- \gamma_n K_{X_n}\otimes K_{X_n}) (\eta_{n-1}^{bias} - \eta_{n-1}^{bias,0}) + \gamma_n (\Sigma -   K_{X_n}\otimes K_{X_n} )\eta_{n-1}^{bias,0}.  \label{eq:bias:rem}
\end{align}

We further decompose $\eta_n^{noise}$ to its main recursion terms and residual recursion terms as 
\begin{align}
\eta_n^{noise,0} = & (I - \gamma_n\Sigma) \eta^{noise,0}_{n-1} + \gamma_n \epsilon_n K_{X_n} \quad \quad \textrm{with} \quad \eta_{0}^{noise, 0}=0 \\
\eta_n^{noise} - \eta_n^{noise,0} = &  (I - \gamma_n K_{X_n}\otimes K_{X_n}) (\eta_{n-1}^{noise} - \eta_{n-1}^{noise,0})
+ \gamma_n (\Sigma - K_{X_n}\otimes K_{X_n} ) \eta_{n-1}^{noise, 0} \label{eq:noise:rem}
\end{align}

We focus on the averaged version $\bar{\eta}_n = \bar{f}_n - f^\ast $ with 
\begin{align*}
\bar{\eta}_n 
= & \bar{\eta}_n^{bias,0} + \bar{\eta}_n^{noise,0} + Rem_{noise} + Rem_{bias}, 
\end{align*}
where $Rem_{noise} = \bar{\eta}_n^{noise}-\bar{\eta}_n^{noise,0}$,  $Rem_{bias} = \bar{\eta}_n^{bias} - \bar{\eta}_n^{bias,0}$.

Theorem~\ref{le:bias_variance} for the constant step case includes three results as follows: 
\begin{align}
& \sup_{z_0\in\mathcal{X}}|\bar{\eta}_n^{bias,0}(z_0)|\lesssim  \frac{1}{\sqrt{n\gamma}} \label{eq:le:bias_variance:1}\\
&\sup_{z_0\in\mathcal{X}} \Var\big(\bar{\eta}_n^{noise,0}(z_0)\big)\lesssim  \frac{(n\gamma)^{1/\alpha}}{n}\label{eq:le:bias_variance:2}\\
& \PP \Big(\|\bar{\eta}_n - \bar{\eta}_n^{bias,0}-\bar{\eta}_n^{noise,0}\|^2_{\infty} \geq \gamma^{1/2} (n\gamma)^{-1}+ \gamma^{1/2} (n\gamma)^{1/\alpha}n^{-1}\log n\Big) \leq 1/n + \gamma^{1/2} \label{eq:le:bias_variance:3}. 
\end{align}

In this session, we will bound the sup-norm of the leading bias term $\bar{\eta}_n^{bias,0}$ and leading variance term $\bar{\eta}_n^{noise,0}$. To complete the proof of (\ref{eq:le:bias_variance:3}), we will bound  $\|Rem_{bias}\|_\infty $ in Section \ref{app:le:rem_bias:con} and $\|Rem_{noise}\|_\infty$ in Section \ref{app:le:rem_var:con}. 

We first provide a clear expression for $\eta_n^{bias,0}$ and $\eta_n^{noise,0}$. 

Denote 
\begin{align*}
D(k, n, \gamma_i) = & \prod_{i=k}^n (I - \gamma_i \Sigma) \quad \textrm{and} \quad D(k, n, \gamma) =  \prod_{i=k}^n (I - \gamma \Sigma)=  (I - \gamma \Sigma)^{n-k+1}\\
M(k, n, \gamma_i ) = & \prod_{i=k}^n (I - \gamma_i K_{X_i} \otimes K_{X_i}) \quad \textrm{and} \quad
M(k, n, \gamma) =  \prod_{i=k}^n (I - \gamma K_{X_i} \otimes K_{X_i}),
\end{align*}
with $D(n+1, n, \gamma_i)=D(n+1, n, \gamma)=1$. We have   \begin{equation}\label{eq:bias:lead:exp}
      \eta_n^{bias,0} = D(1, n, \gamma_i)f^\ast  \quad \textrm{and} \quad 
 \bar{\eta}_n^{bias,0}= \frac{1}{n}\sum_{k=1}^n D(1, k, \gamma_i)f^\ast; 
  \end{equation}
\begin{equation}\label{eq:noise:lead:exp}
\eta_n^{noise,0} = \sum_{i=1}^n D(i+1,n, \gamma_i) \gamma_i \epsilon_i K_{X_i}
\quad \textrm{and} \quad
\bar{\eta}_n^{noise,0} =  \frac{1}{n} \sum_{i=1} ^n \big(\sum_{j=i}^n D(i+1,j, \gamma_i) \big) \gamma_i  \epsilon_i K_{X_i}.
\end{equation}

\underline{\bf{Bound the leading bias term (\ref{eq:le:bias_variance:1})}} For the case of constant step size, based on (\ref{eq:bias:lead:exp}), we have 
\begin{align*}
    \bar{\eta}_n^{bias,0}(z_0)= & \frac{1}{n}\sum_{k=1}^n (I - \gamma \Sigma)^{n-k+1} f^\ast(z_0). 
\end{align*}
Note that any $f\in \mathbb{H}$ can be represented as $f=\sum_{\nu=1}^\infty \langle f, \phi_\nu \rangle_{L_2}\phi_\nu$, where $\{\phi_\nu\}_{\nu=1}^\infty$ satisfies $\|\phi_\nu\|_{L_2}^2 = 1 = \EE (\phi_\nu^2(x))$, $\langle \phi_\nu, \phi_\nu \rangle_{\mathbb{H}} = \mu_\nu^{-1}$, and $\Sigma \phi_\nu = \mu_\nu \phi_\nu$, $\Sigma^{-1}\phi_\nu = \mu_\nu^{-1}\phi_\nu$. 
Then for any $z_0\in \mathcal{X}$, $f^\ast(z_0) = \sum_{\nu=1}^\infty \langle f^\ast, \phi_\nu\rangle_{L_2} \phi_\nu(z_0)$. By the assumption that $f\in \mathbb{H}$ satisfies $\sum_{\nu=1}^\infty  \langle f^\ast, \phi_\nu\rangle_{L_2} \mu_\nu^{-1/2} < \infty$, we have 
\begin{align*}
\bar{\eta}_n^{bias,0}(z_0) = & \frac{1}{\sqrt{\gamma}n}\sum_{k=1}^n \sum_{\nu=1}^\infty  \langle f^\ast, \phi_\nu\rangle_{L_2} \mu_\nu^{-1/2} (1-\gamma \mu_\nu)^k (\gamma\mu_\nu)^{1/2}\phi_\nu(z_0)\\
\leq & c_\phi \frac{1}{\gamma^{1/2}n} \sum_{\nu=1}^\infty \langle f^\ast, \phi_\nu\rangle_{L_2} \mu_\nu^{-1/2} \Big(\sup_{0\leq x \leq 1} 
\big(\sum_{k=1}^n  (1-x)^k x^{1/2}\big) \Big) \\
\leq & c_\phi \frac{1}{\sqrt{n \gamma}} \sum_{\nu=1}^\infty \langle f^\ast, \phi_\nu\rangle_{L_2} \mu_\nu^{-1/2} \lesssim  \frac{1}{\sqrt{n \gamma}}, 
\end{align*}
where the inequality holds based on the bound that  $\sup_{0\leq x \leq 1} 
\big(\sum_{k=1}^n  (1-x)^k x^{1/2}\big)\leq \sqrt{n}$. 

{\underline{\bf{Bound the leading noise term (\ref{eq:le:bias_variance:2})}}}
We first deduce the explicit expression of $\bar{\eta}_n^{noise,0}(z_0)$ and its variance. 
Based on (\ref{eq:noise:lead:exp}), for constant step case, we have for any $z_0\in \cX$,
$$
\bar{\eta}_n^{noise,0}(z_0)  = \frac{1}{n}\sum_{k=1}^n \Sigma^{-1}\big(I - (I-\gamma\Sigma)^{n+1-k} \big) K(X_k,z_0)\epsilon_k. 
$$

Note that for any $x,z$, $K(x,z) = \sum_{\nu=1}^\infty \mu_\nu \phi_\nu(x)\phi_\nu(z)$. Then 
\begin{align*}
\Sigma^{-1}\big(I - (I-\gamma\Sigma)^{n+1-k} \big) K(X_k,z_0) = & \Sigma^{-1}\big(I - (I-\gamma\Sigma)^{n+1-k} \big) \big(\sum_{\nu=1}^\infty \mu_\nu\phi_\nu(X_k) \phi_\nu(z_0)\big)\\
= & \sum_{\nu=1}^\infty (1-(1-\gamma \mu_\nu)^{n+1-k})\phi_\nu(X_k)\phi_\nu(z_0).
\end{align*} 
Therefore, $\bar{\eta}_n^{noise, 0}(z_0) = \frac{1}{n} \sum_{k=1}^n \sum_{\nu=1}^\infty (1-(1-\gamma \mu_\nu)^{n+1-k})\phi_\nu(X_k)\phi_\nu(z_0)\epsilon_k$ with $\EE (\bar{\eta}_n^{noise,0}(z_0)) = 0$, and 
$$
\Var\big(\bar{\eta}_n^{noise,0}(z_0)\big) = \frac{\sigma^2}{n^2} \sum_{\nu=1}^\infty \phi_\nu^2(z_0)\sum_{k=1}^n[1-(1-\gamma \mu_\nu)^{n+1-k} ]^2.
$$
Note that 
$$
\sum_{k=1}^n \sum_{\nu=1}^\infty \big(1-(1-\gamma \mu_\nu)^k\big)^2 \asymp \sum_{k=1}^n \sum_{\nu=1}^\infty \min\bigl\{1, (k\gamma \mu_\nu)^2\bigr\}.
$$
On the other hand, $\sum_{\nu=1}^\infty \min\{1,(k\gamma \mu_\nu)^2\} = (k\gamma)^{1/\alpha} + \sum_{\nu=(k\gamma)^{1/\alpha}+1}^\infty (k\gamma \mu_\nu)^2$. Since 
$$
\sum_{\nu=(k\gamma)^{1/\alpha}+1}^\infty (k\gamma \mu_\nu)^2 \leq \sum_{\nu=(k\gamma)^{1/\alpha}+1}^\infty k\gamma\mu_\nu = k \gamma \sum_{\nu=(k\gamma)^{1/\alpha}+1}^\infty \nu^{-\alpha} \leq k \gamma \int_{(k\gamma)^{1/\alpha}}^\infty x^{-\alpha} dx = (k\gamma)^{1/\alpha},
$$
 we have 
 $$
 \sum_{k=1}^n \sum_{\nu=1}^\infty \big(1-(1-\gamma \mu_\nu)^k\big)^2  \asymp \sum_{k=1}^n (k\gamma)^{1/\alpha} \asymp \gamma^{1/\alpha} n^{(\alpha+1)/\alpha} = (n\gamma)^{1/\alpha} n.  
 $$
 Accordingly, $\Var(\bar{\eta}_n^{noise,0}(z_0))\lesssim  \frac{(n\gamma)^{1/\alpha}}{n}$. 

Meanwhile, $\sum_{\nu=1}^\infty \min\{1,(k\gamma \mu_\nu)^2\} \geq  (k\gamma)^{1/\alpha}$ leads to the result that
$
\sum_{k=1}^n \sum_{\nu=1}^\infty \big(1-(1-\gamma \mu_\nu)^k\big)^2 \geq n(n\gamma)^{1/\alpha}, 
$ thus $\Var(\bar{\eta}_n^{noise,0}(z_0))\geq  \frac{(n\gamma)^{1/\alpha}}{n}$. Therefore, $\Var(\bar{\eta}_n^{noise,0}(z_0)) \asymp \frac{(n\gamma)^{1/\alpha}}{n}$. 

{\underline{\bf{Bound the remaining  term (\ref{eq:le:bias_variance:3})}}} 
Recall  
\begin{align*}
\bar{f}_n - f^\ast = &  \bar{\eta}_n 
=  \bar{\eta}_n^{bias,0} +  \bar{\eta}_n^{noise,0}
+ \big(\bar{\eta}_n^{bias} - \bar{\eta}_n^{bias,0}\big) + \big(\bar{\eta}_n^{noise} - \bar{\eta}_n^{noise,0}\big). 
\end{align*}
To prove (\ref{eq:le:bias_variance:3}) in Theorem \ref{le:bias_variance}, we bound $\|\bar{\eta}_n^{bias} - \bar{\eta}_n^{bias,0}\|_\infty$ and $\|\bar{\eta}_n^{noise} - \bar{\eta}_n^{noise,0}\|_\infty$ separately in Section \ref{app:le:rem_bias:con} and Section \ref{app:le:rem_var:con}.

\subsection{Bound the bias remainder in constant step case}\label{app:le:rem_bias:con}

Recall in (\ref{eq:bias:rem}),  the bias remainder recursion follows 
\begin{align*}
\eta_n^{bias} - \eta_n^{bias,0} = & (I -\gamma  K_{X_n}\otimes K_{X_n})(\eta_{n-1}^{bias} - \eta_{n-1}^{bias,0}) + \gamma (\Sigma - K_{X_n}\otimes K_{X_n})\eta_{n-1}^{bias,0}. 
\end{align*} 
Our goal is to bound $\|\bar{\eta}_n^{bias} - \bar{\eta}_n^{bias,0}\|_{\infty}$.  
Let $\beta_{n} = \eta_n^{bias} - \eta_n^{bias,0}$ with $\beta_{0} = 0$, we have 
$$
\beta_{n} = (I -\gamma K_{X_n}\otimes K_{X_n})\beta_{n-1} + \gamma (\Sigma -K_{X_n}\otimes K_{X_n}) \eta_{n-1}^{bias,0}
$$
with $\eta_n^{bias,0} = (I-\gamma \Sigma)\eta_{n-1}^{bias,0}$ and $\eta_0^{bias,0}=f^\ast$. We first express $\beta_n$ in an explicit form as follows. 

Let $S_{n} = I-\gamma  K_{X_n}\otimes K_{X_n}$, $T_n=\Sigma- K_{X_n}\otimes K_{X_n}$ and $T= I - \gamma \Sigma$, we have 
$
\beta_{n} = S_n \beta_{n-1} + \gamma T_n \eta_{n-1}^{bias,0}.
$
We can further represent $\beta_{n}$ as 
$$
\beta_{n} = \gamma (T_n \eta_{n-1}^{bias,0} + S_n T_{n-1} \eta_{n-2}^{bias,0}+ \cdots + S_n S_{n-1}\dots S_{2}T_{1} \eta_0^{bias,0});
$$
on the other hand, $\eta_{i}^{bias,0} = (I-\gamma \Sigma)^i f^\ast $. Therefore, for any $1\leq i\leq n$, we have 
\begin{equation}\label{eq:beta}
\beta_{i} = \gamma (T_{i} T^{i-1} + S_{i} T_{i-1}T^{i-2}+ \cdots + S_{i} S_{i-1}\cdots S_{2} T_1)\cdot f^\ast \equiv \gamma U_{i}.
\end{equation}

Note that $\|\bar{\beta}_{n}\|_{\infty}\leq \|\Sigma^a \bar{\beta}_{n}\|_\mathbb{H}$. In the following lemma \ref{app:le:bias_rem:con}, we bound $\|\bar{\beta}_{n}\|_{\infty}$ through $\EE\langle \bar{\beta}_{n}, \Sigma^{2a}\bar{\beta}_{n}\rangle$, and show that $\|\bar{\eta}_n^{bias} - \bar{\eta}_n^{bias,0}\|^2_{\infty} = o(\bar{\eta}_n^{bias,0})$ with high probability. 

\begin{Lemma}\label{app:le:bias_rem:con}
Suppose the step size $ \gamma(n) = \gamma$ with $0< \gamma < \mu_1^{-1}$. Then
$$\PP \Big(\|\bar{\eta}_n^{bias} - \bar{\eta}_n^{bias,0}\|^2_{\infty} \geq \gamma^{1/2} (n\gamma)^{-1}\Big) \leq \gamma^{1/2}. 
$$
\end{Lemma}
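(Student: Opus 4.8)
The strategy is to bound $\|\bar\beta_n\|_\infty$ via the augmented-RKHS norm $\|\cdot\|_a$ with the choice $a = \tfrac12-\tfrac1{2\alpha}$, which by Remark~\ref{remark:3_2} satisfies $\|f\|_\infty \le c_a\|f\|_a = c_a\|\Sigma^{-a}f\|_{\mathbb H}$ — here it is cleaner to note $\|f\|_a^2 = \langle f,\,\Sigma^{2a-1}f\rangle_{L_2} = \|\Sigma^{a}f\|_{L_2}^2$, so controlling $\|\bar\beta_n\|_\infty$ reduces to controlling $\EE\big[\langle \bar\beta_n,\,\Sigma^{2a-1}\bar\beta_n\rangle_{L_2}\big]$ (or equivalently $\EE\|\Sigma^a\bar\beta_n\|^2$ in an appropriate sense) and then applying Markov's inequality. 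So the first step is to write, using the explicit form $\bar\beta_n = \tfrac{\gamma}{n}\sum_{i=1}^n U_i$ from~\eqref{eq:beta}, an expansion of $\EE\|\Sigma^{a}\bar\beta_n\|_{L_2}^2$ as a double sum over the indices appearing in $U_i$ and $U_j$.

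The second step is to exploit the structure of $U_i = \sum_{k=1}^i S_i S_{i-1}\cdots S_{k+1} T_k T^{k-1} f^\ast$ together with the key fact that $\EE[T_k \mid \mathcal F_{k-1}] = \EE[\Sigma - K_{X_k}\otimes K_{X_k}] = 0$. Conditioning successively, cross terms involving two distinct ``innovation'' indices vanish in expectation, so the double sum collapses to a sum over a single innovation index $k$; what remains is to bound $\EE\big\|\Sigma^{a}\,\big(\tfrac1n\sum_{i\ge k}\,M(k+1,i)\big)\,T_k\,(I-\gamma\Sigma)^{k-1}f^\ast\big\|_{L_2}^2$, where $M(k+1,i) = \prod_{\ell=k+1}^i S_\ell$ is a product of contractions. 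The operators $S_\ell = I - \gamma K_{X_\ell}\otimes K_{X_\ell}$ are self-adjoint with $0\preccurlyeq S_\ell \preccurlyeq I$ (since $\gamma<\mu_1^{-1}$ and $\|K_x\|_{\mathbb H}^2 = K(x,x)\le R^2$ — though one should be slightly careful, as $\gamma\mu_1^{-1}$ only controls the $\Sigma$-part; the genuine bound $\gamma K(x,x)\le 1$ may need $\gamma$ small, but the stated range $\gamma<\mu_1^{-1}$ is what is assumed, and one works with $\EE S_\ell = I-\gamma\Sigma$ after conditioning, which is a genuine contraction). Here I would follow the Bach--Moulines-style argument: replace the random product $M(k+1,i)$ by its ``averaged'' bound using $\EE[M(k+1,i)^\top \Sigma^{2a} M(k+1,i)] \preccurlyeq$ something controlled by $(I-\gamma\Sigma)^{2(i-k)}\Sigma^{2a}$ up to a correction, then sum over $i$ to get a factor like $\gamma^{-1}\Sigma^{-1}$ acting, and finally use $\EE[T_k \Sigma^{\text{stuff}} T_k]\preccurlyeq c\,R^2\,\Sigma\cdot(\text{trace-type factor})$.

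The third step is the bookkeeping: plug in the bias term $\eta_{k-1}^{bias,0} = (I-\gamma\Sigma)^{k-1}f^\ast$, use the assumption $\sum_\nu \langle f^\ast,\phi_\nu\rangle_{L_2}\mu_\nu^{-1/2}<\infty$ to control $\|\Sigma^{-1/2}f^\ast\|$-type quantities, and carry out the spectral sums $\sum_\nu \mu_\nu^{\text{power}}(1-\gamma\mu_\nu)^{\text{power}\cdot k}$ and $\sum_k(\cdots)$ exactly as in the variance computation in Section~\ref{append:proof:lemma:bias_variance:1} (comparison with $\min\{1,(k\gamma\mu_\nu)\}$ and integral bounds, using $\alpha>1$ and $2a-1 = -1/\alpha$). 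The target is to show $\EE\|\Sigma^a\bar\beta_n\|_{L_2}^2 \lesssim \gamma\,(n\gamma)^{-1}$; then Markov gives $\PP(\|\bar\beta_n\|_\infty^2 \ge \gamma^{1/2}(n\gamma)^{-1}) \le \PP(\|\Sigma^a\bar\beta_n\|_{L_2}^2 \ge c^{-2}\gamma^{1/2}(n\gamma)^{-1}) \le c^2\,\gamma^{1/2}$, absorbing the constant, which is the claimed bound.

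The main obstacle will be Step 2: getting a sharp enough bound on the averaged product of the random contraction operators $M(k+1,i)$ sandwiched between $\Sigma^a$ on one side and $T_k(I-\gamma\Sigma)^{k-1}f^\ast$ on the other, without losing factors of $n$. The naive bound $\|M(k+1,i)\|_{op}\le 1$ throws away the crucial $(I-\gamma\Sigma)$-type decay that makes the $i$-sum converge and produces the $(n\gamma)^{-1}$ rather than a trivial $O(1)$; so one has to track the spectral decay carefully, and this is exactly the place where the choice of the augmented exponent $a$ (weakening high-frequency modes) is what keeps the relevant trace $\sum_\nu \mu_\nu^{1-2a}\cdot\mu_\nu = \sum_\nu\mu_\nu^{2-2a} = \sum_\nu \mu_\nu^{1+1/\alpha}$ finite. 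I would handle this by working modewise: diagonalize everything that is deterministic in the $\phi_\nu$ basis, bound the random $T_k$ contributions by their second moments $\EE\langle \phi_\mu, T_k\Sigma^{c}T_k\phi_\nu\rangle$ which are controlled via Assumption~\ref{asmp:A1} (uniform boundedness of eigenfunctions, $\|\phi_\nu\|_\infty\le c_\phi$), and then reduce to elementary sums. The non-constant step-size version (deferred to the supplement) would replace $(I-\gamma\Sigma)^{i-k}$ by $\prod_{\ell=k+1}^i(I-\gamma_\ell\Sigma)$ throughout, but the structure is identical.
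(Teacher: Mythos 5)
Your plan follows essentially the same route as the paper's proof: the same expansion $\bar\beta_n=\tfrac{\gamma}{n}\sum_i U_i$, the same $\Sigma^{2a}$-weighted (augmented RKHS) norm to dominate $\|\cdot\|_\infty$, the same use of $\EE T_k=0$ to kill cross-innovation terms and operator second-moment bounds ($\EE S_iAS_i=(I-\gamma G)A$, $\EE T_kAT_k\preccurlyeq 2SA$) for the surviving diagonal sums, and the same Markov step with threshold $\gamma^{-1/2}\EE\langle\bar\beta_n,\Sigma^{2a}\bar\beta_n\rangle$ to reach $\gamma^{1/2}(n\gamma)^{-1}$ with probability $1-\gamma^{1/2}$. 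The only caveats are cosmetic: $\|f\|_a=\|\Sigma^{a}f\|_{\mathbb H}$ (not $\|\Sigma^{-a}f\|_{\mathbb H}$), and you should take $a$ strictly inside the admissible range (satisfying $2a\alpha>1$ and $a<1/2-1/(2\alpha)$, as the paper effectively does) rather than exactly at the endpoint, so that the embedding constant $c_a$ and the traces $\sum_\nu\mu_\nu^{2a}$, $\sum_\nu\mu_\nu^{1-2a}$ are finite.
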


\begin{proof}

To simplify the notation, we set $\langle \cdot, \cdot \rangle$ as $\langle \cdot, \cdot \rangle_\mathbb{H}$. For $\bar{\beta}_{n}= \eta_n^{bias} - \eta_n^{bias,0}$, by (\ref{eq:beta}), we have 
\begin{equation}
\EE \langle \bar{\beta}_{n}, \Sigma^{2a}\bar{\beta}_{n}\rangle = \EE \langle \frac{1}{n}\sum_{i=1}^n\gamma U_{i}, \Sigma^{2a}\frac{1}{n}\sum_{i=1}^n \gamma U_{i}\rangle 
=  \frac{\gamma^2}{n^2} \sum_{i=1}^n \EE \langle U_{i}, \Sigma^{2a}U_{i} \rangle  + \frac{2\gamma^2}{n^2}\sum_{i<j} \EE \langle U_{i}, \Sigma^{2a}U_{j} \rangle.
\end{equation}
That is, we split $\EE \langle \bar{\beta}_{n}, \Sigma^{2a}\bar{\beta}_{n}\rangle$ into two parts, and will bound each part separately.

We first bound $ \frac{\gamma^2}{n^2} \sum_{i=1}^n \EE \langle U_{i}, \Sigma^{2a}U_{i} \rangle $. 
Denote $H_{i\ell} = S_iS_{i-1}\cdots S_{\ell+1}T_\ell T^{\ell-1}f^\ast$ with $H_{ii}= T_i T^{i-1}f^\ast$, then $U_{i} = H_{ii}+ H_{i(i-1)}+ \cdots + H_{i1}$. 
\begin{align*}
\EE \langle U_{i}, \Sigma^{2a} U_{i} \rangle = & \EE \langle H_{ii}+ H_{i(i-1)}+ \cdots + H_{i1} , \Sigma^{2a}(H_{ii}+ H_{i(i-1)}+ \cdots + H_{i1}) \rangle\\
= & \sum_{j,k=1}^i \EE \langle H_{ij}, \Sigma^{2a}H_{ik}\rangle
= \sum_{j=1}^i \EE \langle H_{ij}, \Sigma^{2a} H_{ij}\rangle + \sum_{j\neq k} \EE \langle H_{ij}, \Sigma^{2a}H_{ik}\rangle.
\end{align*}

If $j\neq k$, suppose $i\geq j >k \geq 1$, then 
\begin{align*}
&\EE \langle H_{ij}, \Sigma^{2a} H_{ik} \rangle 
=  \EE \langle S_i, S_{i-1}\cdots S_{j+1}T_j T^{j-1}f^\ast, \Sigma^{2a} S_i S_{i-1}\cdots S_{k+1}T_kT^{k-1}f^\ast\rangle \\
= &\EE \Big[ \EE\big[\langle S_iS_{i-1}\cdots S_{j+1}T_jT^{j-1}f^\ast, \Sigma^{2a}S_iS_{i-1}\cdots S_{k+1}T_kT^{k-1}f^\ast | X_i,\dots, X_j, w_i, \dots, w_j\big]\Big]\\
= & \EE \big(\langle S_iS_{i-1}\cdots S_{j+1}T_jT^{j-1}f^\ast, \Sigma^{2a}S_iS_{i-1}\dots S_j \EE (S_{j-1}\cdots S_{k+1}T_k)T^{k-1}f^\ast\big)
= 0, 
\end{align*}
where the last step is due to $\EE(S_{j-1}\cdots S_{k+1}T_k)= \EE S_{j-1}\cdots \EE S_{k+1}\EE T_k = 0$ with the fact that $\EE T_k =0$. 
Therefore, we have $\EE \langle U_{i}, \Sigma^{2a} U_{i} \rangle = \sum_{j=1}^i \EE \langle H_{ij}, \Sigma^{2a} H_{ij}\rangle $. Furthermore, 
\begin{align*}
& \EE \langle H_{ij}, \Sigma^{2a}H_{ij} \rangle 
=  \EE \langle S_iS_{i-1}\cdots S_{j+1}T_jT^{j-1}f^\ast, \Sigma^{2a}S_iS_{i-1}\cdots S_{j+1}T_jT^{j-1}f^\ast \rangle \\
= & \langle f^\ast, \EE(T^{j-1}T_jS_{j+1}\cdots S_i\Sigma^{2a}S_iS_{i-1}\cdots S_{j+1}T_jT^{j-1})f^\ast\rangle
=  \langle f^\ast, \Delta f^\ast\rangle . 
\end{align*}
Note that $\Delta = 
 \EE \big( T^{j-1}T_jS_{j+1}\cdots  \EE(S_i\Sigma^{2a}S_i)S_{i-1}\cdots S_{j+1}T_jT^{j-1} \big)$, with 
\begin{align}
\EE (S_i \Sigma^{2a} S_i)  = & \EE \big((I-\gamma  K_{X_i}\otimes K_{X_i})\Sigma^{2a} (I-\gamma K_{X_i}\otimes K_{X_i})\big)\nonumber\\
 = &  \Sigma^{2a} - \gamma (\Sigma \cdot \Sigma^{2a} + \Sigma^{2a}\cdot \Sigma - 2\gamma S\Sigma^{2a})
 = \Sigma^{2a}-\gamma G\Sigma^{2a}, \label{eq:s_sigma_s}
\end{align}
where $G\Sigma^{2a}= \Sigma \cdot \Sigma^{2a} + \Sigma^{2a}\cdot\Sigma - 2 \gamma S\Sigma^{2a}$ with $S\Sigma^{2a}=\EE \big((K_{x}\otimes K_{x})\Sigma^{2a}(K_{x}\otimes K_{x})\big)$. 

To be abstract, for any $A$, 
$
\EE S_i A S_i =A - \gamma (\Sigma A + A \Sigma - 2\gamma SA) = A - \gamma GA = (I-\gamma G)A, 
$
where $GA= \Sigma A + A \Sigma - 2\gamma SA$. Then $\Delta$ can be written as 
\begin{align*}
\Delta = & \EE \big(T^{j-1} T_j S_{j+1}\cdots S_{i-1} (I-\gamma G) \Sigma S_{i-1} \cdots S_{j+1}T_j\big) T^{j-1}
=  \EE \big(T^{j-1}T_j(I-\gamma G)^{i-j}\Sigma^{2a}T_j T^{j-1}\big).
\end{align*}
Furthermore, for any $A$, 
\begin{align}
\EE T_j A T_j = & \EE ( K_{X_j}\otimes K_{X_j} - \Sigma)A ( K_{X_j}\otimes K_{X_j} - \Sigma)
= \EE  (K_{X_j}\otimes K_{X_j})A(K_{X_j}\otimes K_{X_j}) - \Sigma A \Sigma \label{eq:T1}\\
\leq & 2 \EE (K_{X_j}\otimes K_{X_j})A(K_{X_j}\otimes K_{X_j}) 
= 2 SA.\nonumber
\end{align}
Therefore, $\Delta \prec 2 T^{j-1}S(I-\gamma G)^{i-j}\Sigma^{2a}T^{j-1}$, and in (\ref{eq:T1}), we have 
$
\EE \langle H_{ij}, \Sigma^{2a} H_{ij} \rangle \leq 2 \langle f^\ast, T^{j-1}S(I-\gamma G)^{i-j}\Sigma^{2a}T^{j-1}f^\ast\rangle.
$
Then we have 
\begin{align*}
\sum_{i=1}^n \EE \langle U_{i}, \Sigma^{2a}U_{i} \rangle  = \sum_{i=1}^n \sum_{j=1}^i \EE \langle H_{ij}, \Sigma^{2a} H_{ij}\rangle
\leq & 2 \langle f^\ast, \sum_{i=1}^n \sum_{j=1}^i T^{j-i}S(I-\gamma G)^{i-j}\Sigma^{2a}T^{j-1}f^\ast \rangle. 
\end{align*}
Denote $P=\sum_{i=1}^n \sum_{j=1}^i T^{j-i}S(I-\gamma G)^{i-j}\Sigma^{2a}T^{j-1}$, then 
\begin{align*}
P = & \sum_{i=1}^n \sum_{j=1}^i T^{j-1}S(I-\gamma G)^{i-j} \Sigma^{2a} T^{j-1} 
=  \sum_{j=1}^n T^{j-1}S \sum_{i=j}^n (I-\gamma G)^{i-j}\Sigma^{2a} T^{j-1} 
\leq  n\sum_{j=1}^n T^{j-1}S\Sigma^{2a}T^{j-1}.
\end{align*}
Recall $S\Sigma^{2a} = \EE \big((K_X\otimes K_X)\Sigma^{2a}(K_X\otimes K_X)\big)$, we can bound $S\Sigma^{2a}\leq c_k \Sigma$ as follows. 
\begin{align*}
\langle (S\Sigma^{2a})f, f \rangle = & \langle \EE \big((K_X\otimes K_X)\Sigma^{2a}(K_X\otimes K_X)\big)f, f \rangle
=  \langle \EE f(X)\Sigma^{2a} K_X(X)K_X, f \rangle \\
= & \EE f^2(X) (\Sigma^{2a}K_X)(X) \leq c_k \EE f^2(X) = c_k \langle \Sigma f, f\rangle,
\end{align*}
where the last inequality is due to the fact that 
$$
\Sigma^{2a}K_X(X) = \sum_{\nu=1}^\infty \phi_\nu(X)\phi_\nu(X)\mu_\nu \mu_\nu^{2a} = \sum_{\mu=1}^\infty \phi_{\nu}(x)\phi_{\nu}(x)\mu_\nu^{2a+1} \leq \infty. 
$$
Accordingly, $P \leq n c_k \sum_{j=1}^n T^{2(j-1)}\Sigma \leq n c_k (I-T^2)^{-1}\Sigma \leq n c_k \gamma^{-1}I$; and  
\begin{equation}
\frac{\gamma^2}{n^2}\sum_{i=1}^n \EE \langle U_{i}, \Sigma^{2a}U_{i} \rangle = \frac{\gamma^2}{n^2}\langle f^\ast, P f^\ast \rangle 
\lesssim  \frac{\gamma}{n}\|f\|^2_{\mathbb{H}}.
\end{equation}

Next, we analyze $\EE \langle U_{i}, \Sigma^{2a}U_{j}\rangle$ in (\ref{eq:beta}) for $1\leq i < j \leq n$. Note that 
\begin{align*}
\EE \langle U_{i}, \Sigma^{2a}U_{j}\rangle = & \EE \langle H_{ii}+ \cdots + H_{i1}, \Sigma^{2a}(H_jj + \cdots + H_{j1})\rangle 
= \sum_{\ell=1}^j \sum_{k=1}^j \EE \langle H_{i\ell}, \Sigma^{2a}H_{jk}\rangle.
\end{align*}
We first consider $\ell \neq k$ and assume $\ell > k$, note that $i < j$, then 
\begin{align}
& \EE \langle H_{i\ell}, \Sigma^{2a} H_{jk} \rangle
=  \EE \langle S_iS_{i-1}\cdots S_{\ell+1}T_\ell T^{\ell-1}f^\ast, \Sigma^{2a}S_jS_{j-1}\cdots S_{k+1}T_kT^{k-1}f^\ast\rangle \nonumber \\
= & \EE \langle S_i S_{i-1}\cdots S_{\ell+1}T_{\ell}T^{\ell-1}f^\ast, \Sigma^{2a} S_j \cdots S_{\ell} \EE (S_{\ell -1}\cdots S_{k+1}T_k)T^{k-1}f^\ast \rangle
=  0.\label{eq:H:con}
\end{align}
Similarly, for $\ell <k$, 
$
\EE \langle H_{i\ell}, \Sigma^{2a}H_{jk} \rangle = \EE [\EE\langle H_{i\ell}, \Sigma^{2a}H_{jk} \rangle | X_j, \cdots X_k] = 0.
$
Therefore, 
\begin{align*}
 \EE \langle U_{i}, \Sigma^{2a} U_{j} \rangle 
= &  \sum_{\ell=1}^i \EE \langle H_{i\ell}, \Sigma^{2a}H_{j\ell} \rangle
= \sum_{\ell=1}^i \EE \langle S_i\cdots S_{\ell+1}T_\ell T^{\ell-1}f^\ast, \Sigma^{2a}S_j\cdots S_{\ell+1}T_\ell T^{\ell-1}f^\ast \rangle\\
= &\sum_{\ell=1}^i \EE \langle f^\ast, T^{\ell-1}T_\ell S_{\ell+1}\cdots S_i \Sigma^{2a}S_j \cdots S_i S_{i-1}\cdots S_{\ell+1}T_\ell T^{\ell-1}f^\ast \rangle \\
= & \sum_{\ell=1}^i \EE \langle f^\ast, T^{\ell-1}T_\ell S_{\ell+1}\cdots S_i \Sigma^{2a} (I-\gamma \Sigma)^{j-i}S_i \cdots S_{\ell+1}T_\ell T^{\ell-1}f^\ast \rangle.
\end{align*}
And 
$
\sum_{i<j} \EE \langle U_{i}, \Sigma^{2a} U_{j} \rangle
= \sum_{i=1}^{n-1} \sum_{\ell=1}^i \EE \langle f^\ast, T^{\ell-1}T_\ell S_{\ell+1} \cdots S_i \Sigma^{2a}(\sum_{j=i+1}^n (I -\gamma \Sigma)^{j-i})S_i \cdots S_{\ell+1}T_\ell T^{\ell-1}f^\ast\rangle.
$
Since 
$
\Sigma^{2a} \sum_{j=i+1}^n (I -\gamma \Sigma)^{j-i} =  \Sigma^{2a} \sum_{\ell=1}^{n-i}(I - \gamma \Sigma)^\ell 
\leq  \Sigma^{2a}(I -\gamma \Sigma)(\sum_{\ell=0}^{n-1}(I-\gamma \Sigma)^\ell) 
\leq  \Sigma^{2a}\sum_{\ell=1}^{n-1}(I-\gamma \Sigma)^{\ell} \equiv A, 
$
we have      
\begin{align}
& \sum_{i<j} \EE \langle U_{i}, \Sigma^{2a}U_{j} \rangle 
\leq  \sum_{i=1}^{n-1}\sum_{\ell=1}^i \EE \langle f^\ast, T^{\ell-1}T_\ell S_{\ell+1}\cdots S_i A S_i \cdots S_{\ell+1}T_\ell T^{\ell-1}f^\ast \rangle \nonumber\\
= & \sum_{\ell=1}^{n-1}\sum_{i=\ell}^{n-1} \langle f^\ast, T^{\ell-1}\EE (T_\ell S_{\ell+1}\cdots S_i A S_i \cdots S_{\ell+1}T_\ell)T^{\ell-1}f^\ast \rangle 
\leq \sum_{\ell=1}^{n-1} \langle f^\ast, T^{\ell-1}S (\sum_{i=\ell}^{n-1}(I-\gamma G)^{i-\ell})AT^{\ell-1}f^\ast \rangle \nonumber\\
\leq & \sum_{\ell=1}^{n-1} \langle f^\ast, T^{\ell-1}BAT^{\ell-1}f^\ast\rangle, \label{eq: bias_ij}
\end{align}
where $B=S\sum_{i=\ell}^{n-1}(I-\gamma G)^{i-\ell}$, and $BA= S(\sum_{i=0}^{n-1}(I-\gamma G)^i)A \leq nSA = 2n \EE (K_x \otimes K_x)A(K_x \otimes K_x) 
\leq  n\gamma^{-1}\EE \big((K_X \otimes K_X) \Sigma^{-1+2a}(K_X \otimes K_X)\big)
\leq  n \gamma^{-1} c_k \Sigma,$
where the last step is due to the fact that 
\begin{align*}
& \langle \EE\big((K_X \otimes K_X) \Sigma^{-1+2a}(K_X \otimes K_X)\big)f,f \rangle = 
\EE \langle (K_X \otimes K_X) \Sigma^{-1+2a}K_X f(X),f \rangle \\
= &\EE f(X)\langle K_X\Sigma^{-1+2a}K_X(X), f \rangle 
= \EE f^2(X)\langle \Sigma^{-1+2a}K_X, K_X \rangle \leq C \langle \Sigma f, f \rangle
\end{align*}
with $\langle \Sigma^{-1+2a}K_X, K_X \rangle = \sum_{\nu=1}^\infty \phi_\nu(X)\mu_\nu^{2a}\phi_\nu(X)\leq c_\phi^2 \sum_{\nu=1}^\infty \nu^{-2a\alpha}< \infty$ 
for $2a\alpha>1$.

By equation (\ref{eq: bias_ij}), we have $\sum_{i<j} \EE \langle U_{i}, \Sigma^{2a}U_{j} \rangle \leq \sum_{\ell=1}^{n-1} \langle f^\ast, T^{\ell-1}BAT^{\ell-1}f^\ast \rangle$. Recall $T= I - \gamma \Sigma$. For notation simlicity, let $C=BA$,  then $TCT$ can be written as  
$
(I-\gamma \Sigma) C (I-\gamma \Sigma) =  C - \gamma \Sigma C - \gamma C \Sigma + \gamma^2 \Sigma C \Sigma 
=  C- \gamma \Theta C = (I-\gamma \Theta)C, 
$
where $\Theta$ is an operator such that for any $C$, $\Theta C = \Sigma C + C \Sigma - \gamma^2 \Sigma C \Sigma$. Replacing $C$ with $BA$, we have $T^{\ell-1}BAT^{\ell-1} = (I-\gamma \Theta)^{\ell-1}BA$, and
$$ \sum_{i<j} \EE \langle U_{i}, \Sigma^{2a}U_{j} \rangle \leq \langle f^\ast, \sum_{\ell=1}^{n-1} (I-\gamma \Theta)^{\ell -1} BA f^\ast \rangle. 
$$
Since  $\sum_{\ell=1}^{n-1} (I-\gamma \Theta)^{\ell -1} \leq \gamma^{-1}\Theta^{-1}$, we further need to bound $\Theta^{-1}$. Let $C=\Theta^{-1}$, then $I = \Sigma \Theta^{-1} + \Theta^{-1}\Sigma - \gamma \Sigma \Theta^{-1}\Sigma$. Note that $\Sigma \Theta^{-1}\Sigma \leq tr(\Sigma) \Theta^{-1}\Sigma \leq c \Theta^{-1}\Sigma$, where $c$ is a constant. Then 
\begin{align*}
I \succeq & \Sigma \Theta^{-1} + \Theta^{-1}\Sigma - c\gamma\Theta^{-1}\Sigma
=  \Sigma \Theta^{-1} + (1-c\gamma) \Theta^{-1}\Sigma = (\Sigma \otimes I + (1-c\gamma)I \otimes \Sigma) \Theta^{-1}.
\end{align*}
Therefore, $\Theta^{-1} \preceq (\Sigma \otimes I + (1-c\gamma)I \otimes \Sigma)^{-1} I$, and 
\begin{align*}
\sum_{\ell=1}^{n-1} (I -\gamma \Theta)^{\ell -1} BA \preceq & \gamma^{-1}\Theta^{-1}n\gamma^{-1}\Sigma
\preceq \frac{1}{1+(1-c\gamma)} n\gamma^{-2} 
\end{align*}
Accordingly, we have $\frac{\gamma^2}{n^2}\sum_{i<j} \EE \langle U_{i}, \Sigma^{2a}U_{j} \rangle \lesssim  \frac{\gamma}{n\gamma}$.

Therefore, 
\begin{align*}
\EE \langle \bar{\beta}_{n}, \Sigma^{2a}\bar{\beta}_{n} \rangle = &
\frac{\gamma^2}{n^2} \sum_{i=1}^n \EE \langle U_{i}, \Sigma^{2a}U_{i} \rangle  + \frac{2\gamma^2}{n^2}\sum_{i<j} \EE \langle U_{i}, \Sigma^{2a}U_{j} \rangle 
 \lesssim \frac{\gamma}{n\gamma} \|f\|^2_{\mathbb{H}}.
\end{align*}
Then by Markov's inequality, we have 
$$
\PP\Big( \|\Sigma^a \big(\bar{\eta}_n^{bias} - \bar{\eta}_n^{bias,0}\big)\|^2_\mathbb{H} > \gamma^{-1/2}\EE \|\Sigma^a \bar{\beta}_{b,n}\|^2_\mathbb{H}\Big) \leq \gamma^{1/2} . 
$$
That is, $\|\bar{\eta}_n^{bias} - \bar{\eta}_n^{bias,0}\|^2_{\infty} \leq \frac{\gamma^{1/2}}{n\gamma }$ with probability at least $1-\gamma^{1/2}$. 
\end{proof}

\subsection{Proof the sup-norm bound of noise remainder in constant step case}\label{app:le:rem_var:con} 

Recall the noise remainder recursion follows 
$$
\eta_n^{noise} - \eta_n^{noise,0} = (I-\gamma K_{X_n}\otimes K_{X_n}) (\eta_{n-1}- \eta_{n-1}^{noise,0}) + \gamma(\Sigma-K_{X_n}\otimes K_{X_n})\eta_{n-1}^{noise,0}.
$$
Follow the recursion decomposition in Section \ref{subsec:sketch1}, we can split $\eta_n^{noise} - \eta_n^{noise,0}$ into higher order expansions as 
$$
\eta_n^{noise} = \eta_n^{noise,0} + \eta_n^{noise,1} + \eta_n^{noise,2} +\cdots + \eta_n^{noise,r} + \textrm{Remainder},
$$
where $\eta_n^{noise,d}$ can be viewed as $\eta_n^{noise,d} = (I-\gamma \Sigma)\eta_{n-1}^{noise,d} + \gamma \mathcal{E}_n^{d}$ and $\mathcal{E}_n^{d} = (\Sigma -  K_{X_n}\otimes K_{X_n})\eta_{n-1}^{noise,d-1}$  for $1\leq d\leq r$ and $r\geq 1$. The remainder term follows the recursion as  
$$
\eta_n^{noise} - \sum_{d=0}^r \eta_n^{noise,d} = (I -\gamma  K_{X_n}\otimes K_{X_n})(\eta_{n-1}^{noise} - \sum_{d=1}^r \eta_{n-1}^{noise,d}) + \gamma \mathcal{E}_n^{r+1}. 
$$

The following lemma (see \ref{app:le:noise_rem:con}) demonstrates that the high-order expansion terms $\bar{\eta}_n^{noise,d}$ (for $d\geq 1$) decrease as the value of $d$ increases. In particular, we first characterize the behavior of $\|\bar{\eta}_n^{noise,1}\|_\infty$ by representing it as a weighted empirical process and establish its convergence rate that $\|\bar{\eta}_n^{noise,1}\|_\infty=o(\|\bar{\eta}_n^{noise,0}\|_\infty)$ with high probability. Next, we show that $\|\bar{\eta}_n^{noise,d+1}\|_\infty= o(\|\bar{\eta}_n^{noise,d}\|_\infty)$ for $d\geq 1$ using mathematical induction. Finally, we bound $\bar{\eta}_n^{noise} - \sum_{d=0}^r \bar{\eta}_n^{noise,d}$ through its $\cH$-norm based on the property that $\|\bar{\eta}_n^{noise} - \sum_{d=0}^r \bar{\eta}_n^{noise,d}\|_\infty \leq \|\bar{\eta}_n^{noise} - \sum_{d=0}^r \bar{\eta}_n^{noise,d}\|_\cH$.

\begin{Lemma}\label{app:le:noise_rem:con}
Suppose the step size $ \gamma(n) = \gamma$ with $0< \gamma < n^{-\frac{2}{2+3\alpha}}$. 
Then 
\begin{enumerate}[$(a)$]
\item 
$$\PP\Big(\|\bar{\eta}_n^{noise,1}\|_\infty >  \sqrt{\gamma^{1/2}(n\gamma)^{1/\alpha}n^{-1}\log n} \Big)
\leq 2\gamma^{1/2}.$$ 

\item 
\begin{align*}
 \PP\Big( \|\bar{\eta}_n^{noise,d}\|^2_{\infty} \geq \gamma^{1/4}(n\gamma)^{1/\alpha} n^{-1} \Big)  
\leq &  (n\gamma)^{1/\alpha + 2 \varepsilon}\gamma^{d-1/4}.
\end{align*}
Furthermore, for $d\geq 2$ and $0<\gamma < n^{-\frac{2}{2+3\alpha}}$, we have $(n\gamma)^{1/\alpha + 2 \varepsilon}\gamma^{d-1/4}\leq \gamma^{1/4}$. 

\item  $$
\PP\Big( \|\bar{\eta}_i^{noise} - \sum_{d=0}^r \bar{\eta}_i^{noise,d}\|^2_{\infty} \geq \gamma^{1/4}(n\gamma)^{1/\alpha} n^{-1} \Big) 
\leq n^{-1} .
$$
with $r$ large enough. 
\end{enumerate}
Furthermore, combine (a)-(c), we have 
$$
\PP\Big( \|\bar{\eta}_n^{noise} - \bar{\eta}_n^{noise,0}\|^2_{\infty}  \geq C \gamma^{1/4}(n\gamma)^{1/\alpha} n^{-1} 
 \Big)\leq \gamma^{1/4}, 
$$
where $C$ is a constant. 

\end{Lemma}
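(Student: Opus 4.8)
The three parts are proved separately and then assembled, using the decomposition $\bar{\eta}_n^{noise}-\bar{\eta}_n^{noise,0}=\sum_{d=1}^{r}\bar{\eta}_n^{noise,d}+\bar R_r$ with $\bar R_r:=\bar{\eta}_n^{noise}-\sum_{d=0}^{r}\bar{\eta}_n^{noise,d}$ and $r=r_n\asymp\log n$ chosen at the end. For part $(a)$ I would first unroll the recursions for $\eta_n^{noise,0}$ and $\eta_n^{noise,1}$ and average, obtaining the explicit form $\bar{\eta}_n^{noise,1}(\cdot)=\frac1n\sum_{j=1}^{n-1}\epsilon_j\,g_j(\cdot)$ displayed in Section~\ref{subsec:sketch1}, where each weight $g_j$ depends only on the design $\{X_i\}_{i=1}^n$. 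Conditionally on the design, $t\mapsto\bar{\eta}_n^{noise,1}(t)$ is a centred Gaussian process with conditional variance $\sigma^2n^{-2}\sum_jg_j^2(t)$, so I would bound its supremum by a Gaussian maximal inequality (Borell--TIS plus Dudley's entropy bound): the bound is driven by the conditional variance proxy $\sup_t n^{-2}\sum_j g_j^2(t)$ and by the entropy integral of $[0,1]$ under the intrinsic metric, the latter controlled through the Lipschitz bound on $t\mapsto g_j(t)$ furnished by Assumption~\ref{asmp:A1} and producing the $\log n$ factor. The delicate point is the variance proxy: I would \emph{not} estimate $g_j^2(t)$ through the reproducing kernel of $\mathbb{H}$ (too lossy), but instead pass to the augmented RKHS $\mathbb{H}_a$ of~\eqref{eqn:augmented}, write $g_j^2(t)=\langle g_j,K_t^a\rangle_a^2\le c_a^2\|g_j\|_a^2=c_a^2\langle g_j,\Sigma^{2a}g_j\rangle_{\mathbb{H}}$ with $a$ close to $1/2-1/(2\alpha)$, and then bound $\frac1{n^2}\sum_j\EE\langle g_j,\Sigma^{2a}g_j\rangle_{\mathbb{H}}$ by exactly the operator machinery of Lemma~\ref{app:le:bias_rem:con}: cross terms in $g_j$ vanish because $\EE[\Sigma-K_X\otimes K_X]=0$ and the summands are conditionally centred, while the diagonal terms are controlled by $\EE[(\Sigma-K_X\otimes K_X)A(\Sigma-K_X\otimes K_X)]\preccurlyeq 2\,\EE[(K_X\otimes K_X)A(K_X\otimes K_X)]\preccurlyeq c_k\Sigma$ (for suitably $\Sigma$-bounded $A$) together with the contraction $\EE[(I-\gamma K_X\otimes K_X)A(I-\gamma K_X\otimes K_X)]=(I-\gamma G)A$. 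Summing the resulting geometric series over the iteration indices yields $\frac1{n^2}\sum_j\EE\|g_j\|_a^2\lesssim\gamma^{1/2}(n\gamma)^{1/\alpha}n^{-1}$, and a Markov step over the design delivers the claimed bound with exceptional probability $2\gamma^{1/2}$.

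For part $(b)$ I would argue by induction on $d$. Since $\bar{\eta}_n^{noise,d}$ obeys a recursion of the same shape as $\bar{\eta}_n^{noise,1}$ but driven by $\bar{\eta}_n^{noise,d-1}$, the same computation of $\EE\langle\bar{\eta}_n^{noise,d},\Sigma^{2a}\bar{\eta}_n^{noise,d}\rangle_{\mathbb{H}}$ — vanishing cross terms, diagonal terms bounded by $\EE[T_iAT_i]\preccurlyeq 2SA\preccurlyeq c_k\Sigma$ and the $(I-\gamma G)$-contraction — shows that each passage $d-1\mapsto d$ costs a multiplicative factor of order $\gamma$, up to a polynomial factor $(n\gamma)^{O(\varepsilon)}$ absorbing the slack in $\|f\|_\infty\le c_a\|f\|_a$. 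Iterating from the base estimate $\EE\|\bar{\eta}_n^{noise,0}\|_a^2\lesssim(n\gamma)^{1/\alpha}n^{-1}$ (which follows from the proof of Theorem~\ref{le:bias_variance}) gives $\EE\|\bar{\eta}_n^{noise,d}\|_a^2\lesssim(n\gamma)^{2/\alpha+2\varepsilon}\gamma^{d}n^{-1}$, so Markov's inequality at the threshold $\gamma^{1/4}(n\gamma)^{1/\alpha}n^{-1}$ combined with $\|f\|_\infty\le c_a\|f\|_a$ yields exceptional probability $(n\gamma)^{1/\alpha+2\varepsilon}\gamma^{d-1/4}$. The inequality $(n\gamma)^{1/\alpha+2\varepsilon}\gamma^{d-1/4}\le\gamma^{1/4}$ for $d\ge2$ is then a one-line computation: it reduces to $(n\gamma)^{1/\alpha+2\varepsilon}\gamma^{3/2}\le1$, which holds precisely under the hypothesis $\gamma<n^{-2/(2+3\alpha)}$ once $\varepsilon$ is taken small enough — this is where the restriction on $\gamma$ is used.

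For part $(c)$ the remainder $\bar R_r$ solves recursion~\eqref{eq:remainder:recursion} with forcing $\gamma(\Sigma-K_{X_n}\otimes K_{X_n})\eta_{n-1}^{noise,r}$. Here I would instead use the ordinary RKHS norm: since $I-\gamma K_X\otimes K_X$ has operator norm at most $1$ almost surely in the admissible range of $\gamma$, a plain second-moment estimate bounds $\EE\|\bar R_r\|_{\mathbb{H}}^2$ by a quantity that inherits the geometric-in-$r$ decay of $\EE\|\bar{\eta}_n^{noise,r}\|^2$ from part $(b)$; choosing $r=r_n\asymp\log n$ makes this $\le n^{-3}\gamma^{1/4}(n\gamma)^{1/\alpha}$, and Markov together with $\|f\|_\infty\le R\|f\|_{\mathbb{H}}$ gives exceptional probability $n^{-1}$. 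To assemble, $\|\bar{\eta}_n^{noise}-\bar{\eta}_n^{noise,0}\|_\infty\le\|\bar{\eta}_n^{noise,1}\|_\infty+\sum_{d\ge2}\|\bar{\eta}_n^{noise,d}\|_\infty+\|\bar R_r\|_\infty$; a union bound over the events in $(a)$, $(b)$ (for $d=2,\dots,r$) and $(c)$ has failure probability $2\gamma^{1/2}+\sum_{d=2}^{r}(n\gamma)^{1/\alpha+2\varepsilon}\gamma^{d-1/4}+n^{-1}\le C\gamma^{1/4}$. On the complement, the tail $\sum_{d\ge2}\|\bar{\eta}_n^{noise,d}\|_\infty$ is a convergent geometric series dominated up to a constant by $\|\bar{\eta}_n^{noise,2}\|_\infty=O(\sqrt{\gamma^{1/4}(n\gamma)^{1/\alpha}n^{-1}})$, while the $\gamma^{1/2}(n\gamma)^{1/\alpha}n^{-1}\log n$ bound of part $(a)$ is itself $\le\gamma^{1/4}(n\gamma)^{1/\alpha}n^{-1}$ because $\gamma^{1/4}\log n\le1$ in the stated range; hence $\|\bar{\eta}_n^{noise}-\bar{\eta}_n^{noise,0}\|_\infty^2\le C\gamma^{1/4}(n\gamma)^{1/\alpha}n^{-1}$, as claimed. (To make the geometric summation rigorous without an extra $\log n$ from the number of terms one applies Markov in part $(b)$ at a $d$-dependent threshold, keeping the exceptional probabilities summable; this is routine.)

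The main obstacle is part $(a)$: $\bar{\eta}_n^{noise,1}$ is exactly the second-order recursion for which the naive conversion of an $L_2$ bound into a supremum bound via the reproducing kernel of $\mathbb{H}$ fails to be of strictly smaller order than the leading term, so one is forced both to work in the augmented space $\mathbb{H}_a$ (to align the norm with $\|\cdot\|_\infty$ while only mildly inflating the operator estimates) and to run a genuine conditional Gaussian-chaining argument rather than a crude moment bound.
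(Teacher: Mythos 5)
Your parts $(b)$ and $(c)$ are essentially the paper's argument (the paper bounds $\EE\langle\bar{\eta}_n^{noise,d},\Sigma^{2a}\bar{\eta}_n^{noise,d}\rangle\lesssim (n\gamma)^{2/\alpha+2\varepsilon}\gamma^{d}n^{-1}$ directly via $\EE[\mathcal{E}_k^{d}\otimes\mathcal{E}_k^{d}]\preccurlyeq\gamma^{d}\Sigma$ and an interpolation bound on $M_{n,k}\Sigma^{2a}$, and bounds the remainder after $r$ terms in $\|\cdot\|_{\mathbb{H}}$ with $r$ a large constant), though note your quoted base estimate $\EE\|\bar{\eta}_n^{noise,0}\|_a^2\lesssim(n\gamma)^{1/\alpha}n^{-1}$ is the supremum of the pointwise variance, not the $a$-norm; the $a$-norm of the leading noise is already of order $(n\gamma)^{2/\alpha}n^{-1}$.

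The genuine gap is in part $(a)$, at exactly the step you flag as delicate. You propose to control the conditional variance proxy by $\sup_t n^{-2}\sum_j g_j^2(t)\le c_a^2 n^{-2}\sum_j\|g_j\|_a^2$ and claim $n^{-2}\sum_j\EE\|g_j\|_a^2\lesssim\gamma^{1/2}(n\gamma)^{1/\alpha}n^{-1}$. This estimate is false: the coefficients of $g_j$ are spread essentially flat over the first $\sim(n\gamma)^{1/\alpha}$ eigendirections (the factors $b_{ik}=1-(1-\gamma\mu_k)^{n-i}$ are of order one there), so weighting by $\mu_k^{2a-1}\asymp k^{1+2\alpha\varepsilon}$ costs an effective-dimension factor, and the correct order of the $a$-norm proxy is $\gamma(n\gamma)^{2/\alpha+2\varepsilon}n^{-1}$. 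Requiring this to be $\lesssim\gamma^{1/2}(n\gamma)^{1/\alpha}n^{-1}$ amounts to $\gamma^{1/2}(n\gamma)^{1/\alpha+2\varepsilon}\lesssim1$, which fails throughout the relevant range (e.g.\ at $\gamma\asymp n^{-1/(\alpha+1)}$ it diverges like $n^{1/(2(\alpha+1))}$); with the true order, your Gaussian-chaining bound on $\|\bar{\eta}_n^{noise,1}\|_\infty$ is no longer $o$ of the leading noise term, which defeats the purpose of the lemma. This is precisely why the paper does \emph{not} use the augmented norm for the second-order term: it computes $\EE\sum_j g_j^2(s)$ at a fixed $s$ directly from the eigen-expansion of $g_j$, exploiting cancellations in the cross terms (the $\Delta_1$, $\Delta_{21}$, $\Delta_{22}^{(1)}$ pieces, which require genuine second-moment concentration arguments with a $\sqrt{n}$ gain, not just $\EE T_jAT_j\preccurlyeq 2SA$), obtaining $n\gamma^{-3}(n\gamma)^{1/\alpha}$ rather than $n\gamma^{-3}(n\gamma)^{2/\alpha}$; it then applies Markov at fixed $s$, Hoeffding conditionally on the design, and passes to the supremum by a separate bridging/discretization lemma (in the supplement). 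The augmented RKHS is reserved for the bias remainder and the $d\ge2$ noise terms, where the extra $\gamma^{d}$ slack absorbs the dimension loss. To repair your proof you would need either to replace the $a$-norm step by a genuinely pointwise (or discretized, uniform-over-a-grid) bound on $\sum_j g_j^2(t)$ of the correct order, or some other argument capturing the cancellation over frequencies; as written, the chaining scaffolding is sound but its key input is off by a polynomial factor.
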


\begin{proof}
\underline{\bf Proof of Lemma \ref{app:le:noise_rem:con} (a) by analyzing $\|\bar{\eta}_n^{noise,1}\|_{\infty}$.} First, we calculate the explicit expression of $\eta_n^{noise,1}$. 
Let $T= I-\gamma\Sigma$ and $T_n = \Sigma - K_{X_n}\otimes K_{X_n}$, then 
 $\eta_n^{noise,1}= T \eta_{n-1}^{noise,1} + \gamma T_n \eta_{n-1}^{noise,0}$ with $\eta_0^{noise,1}=0$. Therefore,  
\begin{align*}
\eta_n^{noise,1} = &\gamma \sum_{i=1}^{n-1} T^{n-i-1}T_{i+1}\eta_i^{noise,0}
=  \gamma^2 \sum_{i=1}^{n-1}\sum_{j=1}^i \epsilon_j T^{n-i-1}T_{i+1}T^{i-j}K_{X_j},
\end{align*}
where the last step is by plugging in $
\eta_i^{noise,0} = \gamma \sum_{j=1}^{i-1} T^{i-j}\epsilon_jK_{X_j} 
$ in (\ref{eq:noise:lead:exp}) with $\gamma= \gamma(n)$. Accordingly, 
\begin{align}
\bar{\eta}_n^{noise,1} = & \frac{\gamma^2}{n} \sum_{\ell=1}^{n-1}\sum_{i=1}^\ell \sum_{j=1}^i \epsilon_j T^{\ell-i}T_{i+1}T^{i-j}K_{X_j}
= \frac{\gamma^2}{n} \sum_{j=1}^{n-1}\big(\sum_{i=j}^{n-1}(\sum_{\ell=i}^{n-1}T^{\ell-i})T_{i+1}T^{i-j}K_{X_j}\big)\epsilon_j. \label{eq:eta_1:con}
\end{align}
Let $g_{j} = \sum_{i=j}^{n-1}(\sum_{\ell=i}^{n-1}T^{\ell-i})T_{i+1}T^{i-j}K_{X_j}$, where the randomness of $g_{j}$ involves $X_j,X_{j+1}, \dots, X_n$. Then $\bar{\eta}_n^{noise,1}(\cdot) = \frac{\gamma^2}{n} \sum_{j=1}^{n-1} \epsilon_j \cdot g_{j}(\cdot)$, which is a Gaussian process conditional on $(X_j,\dots, X_n)$. 

We can further express $g_{j}(\cdot)$ as a function of the eigenvalues and eigenfunctions that follows 
\begin{equation}\label{eq:gj:con}
g_{j}(\cdot) = \gamma^{-1} \sum_{\nu,k=1}^\infty \mu_\nu \sum_{i=j}^{n-1}(1-\gamma \mu_\nu)^{i-j}(1-(1-\gamma \mu_k)^{n-i})\phi_{i\nu k}\phi_\nu(X_j)\phi_k(\cdot)
\end{equation}
with $\phi_{i\nu k} = \phi_\nu(X_{i+1})\phi_k(X_{i+1})- \delta_{\nu k}$; we refer the proof to \cite{liu2023supp}.  Such expression can facilitate the downstream analysis of $\bar{\eta}_n^{noise, 1}$. Denote $a_{ij\nu}=(1-\gamma \mu_\nu)^{i-j}$ and $b_{ik}= 1-(1-\gamma \mu_k)^{n-i}$. Then $g_j$ can be simplified as  $g_{j}= \gamma^{-1}\sum_{\nu, k=1}^\infty \mu_\nu \big(\sum_{i=j}^{n-1}a_{ij\nu}b_{ik}\phi_{b,i\nu k} \big)\phi_\nu(X_j)\phi_k$. 

We are ready to prove that $\|\bar{\eta}_n^{noise,1} \|_{\infty} \leq \gamma^{\frac{1}{2}} n^{-\frac{1}{2}} (n\gamma)^{\frac{1}{2\alpha}}
$ where  $\bar{\eta}_n^{noise,1} (\cdot) = \frac{\gamma^2}{n}\sum_{j=1}^{n-1} \epsilon_j\cdot g_{j}(\cdot)$. It involves two steps: (1) for any fixed $s$, we see that $\bar{\eta}_n^{noise,1} (s) = \frac{\gamma^2}{n}\sum_{j=1}^{n-1} \epsilon_j\cdot g_{j}(s)$ is a weighted Gaussian random variable with variance $\frac{\gamma^4}{n^2}\sum_{j=1}^{n-1}g^2_{j}(s)$ conditional on $X_{1:n}= (X_1, \dots, X_n)$. Therefore, we first bound $\bar{\eta}_n^{noise,1} (s)$ with an exponentially decaying probability by characterizing $\sum_{j=1}^{n-1}g^2_{j}(s)$; (2) we then bridge $\bar{\eta}_n^{noise,1} (s)$ to $\|\bar{\eta}_n^{noise,1}\|_\infty$. We illustrate the details as follows. 

Conditional on $X_{1:n}$, 
$\bar{\eta}_n^{noise,1} (s) = \frac{\gamma^2}{n}\sum_{j=1}^{n-1} \epsilon_j\cdot g_{j}(s)$ is a weighted Gaussian random variable; by Hoeffding's inequality, 
\begin{equation}\label{eq:noise1:con:original}
\PP \Big( \frac{\gamma^2}{n}|\sum_{j=1}^{n-1} \epsilon_j\cdot g_{j}(s)| > u  \mid X_{1:n} \Big) \leq \exp\Big(-\frac{u^2 n^2}{\gamma^4 \sum_{j=1}^{n-1}g_j^2(s)}\Big). 
\end{equation}
We then bound $\sum_{j=1}^{n-1}\EE g_j^2(s)$. We separate $\sum_{j=1}^{n-1}g^2_j(s)$ as two parts as follows: 
\begin{align*} 
& \sum_{j=1}^{n-1}g^2_j(s) \\
\leq  & \gamma^{-2} \sum_{\nu, \nu'=1}^\infty \sum_{j=1}^{n-1} \mu_\nu \mu_{\nu'}(\phi_\nu(X_j)\phi_{\nu'}(X_j)-\delta_{\nu\nu'}) \sum_{i,\ell=j}^{n-1}a_{ij\nu}a_{\ell j\nu'}\sum_{k,k'=1}^\infty b_{ik}b_{\ell k'}\phi_{i\nu k}\phi_{\ell\nu' k'}\phi_k(s)\phi_{k'}(s)\\
& + \gamma^{-2}\sum_{\nu=1}^\infty \mu_\nu^2 \sum_{j=1}^{n-1}\sum_{i,\ell=j}^{n-1} a_{ij\nu}a_{\ell j\nu}\sum_{k,k'=1}^\infty b_{ik}b_{\ell k'}\phi_{i\nu k}\phi_{\ell \nu k'}\phi_k(s)\phi_{k'}(s)\\
= &\Delta_1 + \Delta_2,
\end{align*}   
where $\Delta_1$ involves the interaction terms indexed by $\nu, \nu'$ and $\Delta_2$ includes the terms that $\nu=\nu'$. 
Recall $b_{ik}=(1-(1-\gamma\mu_k)^{n-i})$. Then $b_{ik}<(1-(1-\gamma\mu_k)^{n})\equiv b_k$ for $1\leq i \leq n$. For $\Delta_1$, we have 
 \begin{align*}                           
\Delta_1 
 \leq &  \gamma^{-2}  \sum_{k,k'=1}^\infty b_{k}b_{k'}\phi_k(s)\phi_{k'}(s)
 \sum_{\nu, \nu'=1}^\infty \mu_\nu\mu_{\nu'}\sum_{j=1}^{n-1} \big(\phi_\nu(X_j)\phi_{\nu'}(X_j)-\delta_{\nu\nu'}\big)\sum_{i,\ell=j}^{n-1}a_{ij\nu}a_{\ell j\nu'}\phi_{i\nu k}\phi_{\ell \nu' k'}, 
 \end{align*}
 Take expectation on $\Delta_1$, we can see 
\begin{align}
& \EE |\sum_{\nu,\nu'=1}^\infty \mu_\nu\mu_{\nu'}\sum_{j=1}^{n-1} \big(\phi_\nu(X_j)\phi_{\nu'}(X_j)-\delta_{\nu\nu'}\big)\sum_{i,\ell=j}^{n-1} a_{ij\nu}a_{\ell j\nu'}\phi_{i\nu k}\phi_{\ell \nu' k'}|^2 \label{eq:noise:1:delta:con}\\
\leq & \big(\sum_{\nu,\nu'=1}^\infty \mu_\nu^{\frac{1+\varepsilon}{\alpha}}\mu_{\nu'}^{\frac{1+\varepsilon}{\alpha}}\big) \big(\sum_{\nu, \nu'=1}^\infty \mu_\nu^{2-\frac{1+\varepsilon}{\alpha}}\mu_{\nu'}^{2-\frac{1+\varepsilon}{\alpha}} \EE |\sum_{j=1}^{n-1} \big(\phi_\nu(X_j)\phi_{\nu'}(X_j)-\delta_{\nu\nu'}\big) \sum_{i,\ell=j}^{n-1} a_{ij\nu}a_{\ell j\nu'}\phi_{i\nu k}\phi_{\ell \nu' k'}|^2\big)\nonumber\\
\lesssim & \big(\sum_{\nu, \nu'=1}^\infty \mu_\nu^{\frac{1+\varepsilon}{\alpha}}\big)^2 
\big(\sum_{\nu, \nu'=1}^\infty \mu_\nu^{2-\frac{1+\varepsilon}{\alpha}}\mu_{\nu'}^{2-\frac{1+\varepsilon}{\alpha}} \sum_{j=1}^{n-1}\EE \big(\phi_\nu(X_j)\phi_{\nu'}(X_j)-\delta_{\nu\nu'}\big)^2\cdot \EE |\sum_{i,\ell=j}^{n-1} a_{ij\nu}a_{\ell j\nu'}\phi_{i\nu k}\phi_{\ell \nu' k'}|^2\big),\nonumber
\end{align} 
where the last step is due to the calculation that 
\begin{align*}
& \EE |\sum_{j=1}^{n-1} \big(\phi_\nu(X_j)\phi_{\nu'}(X_j)-\delta_{\nu\nu'}\big) \sum_{i,\ell=j}^{n-1} a_{ij\nu}a_{\ell j\nu'}\phi_{i\nu k}\phi_{\ell \nu' k'}|^2 \\
= & \sum_{j=1}^{n-1}  \EE \big(\phi_\nu(X_j)\phi_{\nu'}(X_j)-\delta_{\nu\nu'}\big)^2\cdot \EE |\sum_{i,\ell=j}^{n-1} a_{ij\nu}a_{\ell j\nu'}\phi_{i\nu k}\phi_{\ell \nu' k'}|^2 \\
& + 2 \sum_{j_1< j_2} \EE\big(\big(\phi_\nu(x_{j_1})\phi_{\nu'}(x_{j_1})-\delta_{\nu\nu'}\big)\big) \cdot \EE\big( \big(\phi_\nu(x_{j_2})\phi_{\nu'}(x_{j_2})-\delta_{\nu\nu'}\big)\cdot(\sum_{i,\ell=j_1}^{n-1} a_{ij_1\nu}a_{\ell j_1\nu'}\phi_{i\nu k}\phi_{\ell \nu' k'})\\
& \cdot (\sum_{i,\ell=j_2}^{n-1} a_{ij_2\nu}a_{\ell j_2\nu'}\phi_{i\nu k}\phi_{\ell \nu' k'}) \big)=   \sum_{j=1}^{n-1}  \EE \big(\phi_\nu(X_j)\phi_{\nu'}(X_j)-\delta_{\nu\nu'}\big)^2\cdot \EE |\sum_{i,\ell=j}^{n-1} a_{ij\nu}a_{\ell j\nu'}\phi_{i\nu k}\phi_{\ell \nu' k'}|^2,
\end{align*}
with $\EE\big(\phi_\nu(x_{j_1})\phi_{\nu'}(x_{j_1})-\delta_{\nu\nu'}\big) = 0$. 
Note that 
\begin{align*}
& \sum_{j=1}^{n-1}\big(\EE \big(\phi_\nu(X_j)\phi_{\nu'}(X_j)-\delta_{\nu\nu'}\big)^2\cdot \EE |\sum_{i,\ell=j}^{n-1} a_{ij\nu}a_{\ell j\nu'}\phi_{i\nu k}\phi_{\ell \nu' k'}|^2\big)
\leq  \sum_{j=1}^{n-1} \EE |\sum_{i,\ell=j}^{n-1} a_{ij\nu}a_{\ell j\nu'}\phi_{i\nu k}\phi_{\ell \nu' k'}|^2\\
= & \sum_{j=1}^{n-1}\sum_{i_1,i_2=j}^{n-1} \sum_{\ell_1,\ell_2=j}^{n-1} a_{i_1j\nu}a_{\ell_1 j \nu'}a_{i_2 j \nu}a_{\ell_2 j \nu'} \EE(\phi_{i_1\nu k}\phi_{i_2\nu k}\phi_{\ell_1\nu'k'}\phi_{\ell_2 \nu'k'})\\
\overset{(i)}\lesssim & \sum_{j=1}^n \big(\sum_{i=j}^{n-1} a_{ij\nu}^2a_{ij\nu'}^2 + \sum_{i,\ell=j}^{n-1}a_{ij\nu}^2a_{\ell j\nu'}^2  + \sum_{i,\ell=j}^{n-1} a_{ij\nu}a_{ij\nu'}a_{\ell j \nu}a_{\ell j \nu'}\big) \\
\leq & \sum_{j=1}^{n-1}\big(\sum_{i=j}^{n-1}a_{ij\nu}^2\big)\big(\sum_{i=j}^{n-1}a^2_{ij\nu'}\big) + \big(\sum_{i=j}^{n-1}a_{ij\nu}a_{ij\nu'}\big)^2.
\end{align*}
In the $(i)$-step, $\EE(\phi_{i_1\nu k}\phi_{i_2\nu k}\phi_{\ell_1\nu'k'}\phi_{\ell_2 \nu'k'})\neq 0$ if and only if the following cases hold: (1)$i_1 = i_2 = \ell_1= \ell_2$; (2) $i_1 = i_2$ and $\ell_1= \ell_2$; (3) $i_1= \ell_1$ and $i_2= \ell_2$. 
Recall $a_{ij\nu}=(1-\gamma \mu_\nu)^{i-j}$. Then we have 
\begin{align*}
 \sum_{i=j}^{n-1}a_{ij\nu}a_{ij\nu'} 
= & \sum_{i=j}^{n-1}[(1-\gamma \mu_\nu)(1-\gamma \mu_{\nu'})]^{i-j}
\leq   (1-(1-\gamma \mu_\nu)(1-\gamma \mu_{\nu'}))^{-1}
\leq   \gamma^{-1}(\mu_\nu + \mu_{\nu'})^{-1}.
\end{align*}
For $\sum_{i=j}^{n-1} a^2_{ij\nu}$, we have 
$
\sum_{i=j}^{n-1} a^2_{ij\nu} = \sum_{i=j}^{n-1} (1-\gamma \mu_\nu)^{2(i-j)}
\lesssim \gamma^{-1} \mu_\nu^{-1} .  
$ 
Therefore, 
\begin{align*}
& \EE |\sum_{\nu,\nu'=1}^\infty \mu_\nu\mu_{\nu'}\sum_{j=1}^{n-1} \big(\phi_\nu(X_j)\phi_{\nu'}(X_j)-\delta_{\nu\nu'}\big)\sum_{i,\ell=j}^{n-1} a_{ij\nu}a_{\ell j\nu'}\phi_{i\nu k}\phi_{\ell \nu' k'}|^2\\
\lesssim & (\sum_{\nu=1}^\infty \mu_\nu^{\frac{1+\varepsilon}{\alpha}})^2 \sum_{\nu,\nu'=1}^\infty \mu_\nu^{2-\frac{1+\varepsilon}{\alpha}}\mu_{\nu'}^{2-\frac{1+\varepsilon}{\alpha}} \sum_{j=1}^{n-1}\big(\gamma^{-2}(\mu_\nu+ \mu_\nu')^{-2} + \gamma^{-1}\mu_\nu^{-1}\gamma^{-1}\mu_{\nu'}^{-1}\big)\\
\lesssim & n\gamma^{-2}\big(\sum_{\nu,\nu'=1}^\infty \mu_\nu^{1-\frac{1+\varepsilon}{\alpha}}\mu_{\nu'}^{1-\frac{1+\varepsilon}{\alpha}}+\sum_{\nu,\nu'=1}^\infty \mu_\nu^{2-\frac{1+\varepsilon}{\alpha}}\mu_{\nu'}^{2-\frac{1+\varepsilon}{\alpha}}(\mu_\nu + \mu_{\nu'})^{-2} \big) \lesssim n\gamma^{-2}, 
\end{align*} 
with $\varepsilon\to 0$. 
The final step is due to the fact that 
\begin{align*}
& \sum_{\nu,\nu'=1}^\infty \mu_\nu^{2-\frac{1+\varepsilon}{\alpha}}\mu_{\nu'}^{2-\frac{1+\varepsilon}{\alpha}}(\mu_\nu + \mu_{\nu'})^{-2} \\
=& \sum_{\nu,\nu'=1}^\infty \frac{\mu_\nu \mu_{\nu'}}{(\mu_\nu+\mu_{\nu'})^2}\mu_{\nu}^{1-\frac{1+\varepsilon}{\alpha}}\mu_{\nu'}^{1-\frac{1+\varepsilon}{\alpha}} \leq \sum_{\nu,\nu'=1}^\infty \mu_{\nu}^{1-\frac{1+\varepsilon}{\alpha}}\mu_{\nu'}^{1-\frac{1+\varepsilon}{\alpha}} = (\sum_{\nu=1}^\infty \mu_{\nu}^{1-\frac{1+\varepsilon}{\alpha}})^2 \leq C. 
\end{align*}
Since $b_{k}\leq \min\{1, n\gamma\mu_k\}$ and accordingly, 
$
 \sum_{k,k' =1}^\infty b_k b_{k'}  =  (\sum_{k=1}^\infty  (1-(1-\gamma \mu_k)^n))^2  \leq (n\gamma)^{\frac{2}{\alpha}}. 
$ Therefore, we have 
\begin{equation}\label{eq:delta_1}
\EE \Delta_1 \leq \sqrt{\EE \Delta_1^2} \leq \sqrt{n} \gamma^{-3} (n\gamma)^{\frac{2}{\alpha}}.
\end{equation}

For $\Delta_2$, we rewrite $\Delta_2$ as 
\begin{align}
\Delta_2= & \gamma^{-2}\sum_{\nu=1}^\infty \mu_\nu^2 \sum_{j=1}^{n-1}\sum_{i,\ell=j}^{n-1} a_{ij\nu}a_{\ell j\nu}\sum_{k,k'=1}^\infty b_{ik}b_{\ell k'}\phi_{i\nu k}\phi_{\ell \nu k'} \nonumber\\ 
= & \gamma^{-2}\sum_{\nu=1}^\infty \mu_\nu^2 \sum_{j=1}^{n-1}\sum_{j\leq i< \ell \leq n-1} a_{ij\nu}a_{\ell j\nu}\sum_{k,k'=1}^\infty b_{ik}b_{\ell k'}\phi_{i\nu k}\phi_{\ell \nu k'}\nonumber\\ 
 & \quad + \gamma^{-2}\sum_{\nu=1}^\infty \mu_\nu^2 \sum_{j=1}^{n-1}w_j^2 \sum_{i=j}^{n-1} a^2_{ij\nu}\sum_{k,k'=1}^\infty b_{ik}b_{\ell k'}\phi_{i\nu k}\phi_{i \nu k'} \nonumber\\
= &\Delta_{21} + \Delta_{22}, \label{eq:noise:con:delta2}
\end{align}
where $\Delta_{21}$ includes the terms that $i\neq \ell$ and $\Delta_{22}$ includes the terms that  $i=\ell$. 
For $\Delta_{21}$, with any positive $\varepsilon\to 0$, we have 
\begin{align*}
 \Delta_{21}
\leq  &  2\gamma^{-2}\sum_{k,k'=1}^\infty b_{k}b_{k'}\sum_{\nu=1}^\infty \mu_\nu^2  \sum_{j=1}^{n-1} \sum_{j\leq i< \ell \leq n-1} a_{ij\nu}a_{\ell j\nu} \phi_{i\nu k}\phi_{\ell \nu k'} \\
= & 2 \gamma^{-2}\sum_{k,k'=1}^\infty b_{k}b_{k'}  \sum_{\nu=1}^\infty \mu_\nu^{\frac{1+\varepsilon}{2\alpha}} \mu_\nu^{2-\frac{1+\varepsilon}{2\alpha}}\sum_{j=1}^{n-1} \sum_{j\leq i< \ell \leq n-1} a_{ij\nu}a_{\ell j\nu} \phi_{i\nu k}\phi_{\ell \nu k'}\\
\leq & 2 \gamma^{-2}
 \sum_{k,k'=1}^\infty b_{k}b_{k'} \sqrt{\sum_{\nu=1}^\infty \mu_\nu^{\frac{1+2\varepsilon}{\alpha}} }\sqrt{ \sum_{\nu=1}^\infty  \mu_\nu^{4-\frac{1+2\varepsilon}{\alpha}}\big(\sum_{j=1}^{n-1}\sum_{j\leq i< \ell \leq n-1} a_{ij\nu}a_{\ell j\nu} \phi_{i\nu k}\phi_{\ell \nu k'}\big)^2}.
\end{align*}
To bound the expectation of $\Delta_{21}$, we need to bound $\EE |\sum_{j=1}^{n-1}\sum_{j\leq i< \ell \leq n-1} a_{ij\nu}a_{\ell j\nu} \phi_{i\nu k}\phi_{\ell \nu k'}|^2$. Note that 
\begin{align*}
& \EE |\sum_{j=1}^{n-1}\sum_{j\leq i< \ell \leq n-1} a_{ij\nu}a_{\ell j\nu} \phi_{i\nu k}\phi_{\ell \nu k'}|^2
\lesssim  \EE |\sum_{j=1}^{n-1}\sum_{j\leq i< \ell \leq n-1} a_{ij\nu}a_{\ell j\nu} \phi_{i\nu k}\phi_{\ell \nu k'}|^2\\
= & \sum_{j,d=1}^{n-1} \EE \big(\sum_{j\leq i< \ell \leq n-1} a_{ij\nu}a_{\ell j\nu} \phi_{i\nu k}\phi_{\ell \nu k'}\big)\big(\sum_{d\leq i< \ell \leq n-1} a_{id\nu}a_{\ell d\nu}\phi_{i\nu k}\phi_{\ell \nu k'}\big)\\
= & \sum_{j=1}^n \EE |\sum_{j\leq i< \ell \leq n-1} a_{ij\nu}a_{\ell j\nu} \phi_{i\nu k}\phi_{\ell \nu k'}|^2 \\
& + 2 \sum_{d=1}^{n-1}\sum_{j=1}^{d-1} \EE \big(\sum_{d\leq i< \ell \leq n-1} a_{ij\nu}a_{\ell j\nu}\phi_{i\nu k}\phi_{\ell \nu k'}\big)\big(\sum_{d\leq i< \ell \leq n-1} a_{id\nu}a_{\ell d\nu} \phi_{i\nu k}\phi_{\ell \nu k'} \big) \\
& \overset{(i)}+ 2 \sum_{d=1}^{n-1}\sum_{j=1}^{d-1} \EE \big(\sum_{j\leq i< \ell \leq d-1} a_{ij\nu}a_{\ell j\nu}\phi_{i\nu k}\phi_{\ell \nu k'}\big)\big(\sum_{d\leq i< \ell \leq n-1} a_{id\nu}a_{\ell d\nu} \phi_{i\nu k}\phi_{\ell \nu k'} \big),
\end{align*}
where the last term $(i)$ is $0$. Then we have 
\begin{align*}
&  \sum_{d=1}^{n-1}\sum_{j=1}^{d-1} \EE \big(\sum_{d\leq i< \ell \leq n-1} a_{ij\nu}a_{\ell j\nu} \phi_{i\nu k}\phi_{\ell \nu k'}\big)\big(\sum_{d\leq i< \ell \leq n-1} a_{id\nu}a_{\ell d\nu} \phi_{i\nu k}\phi_{\ell \nu k'} \big)\\
= & \sum_{d=1}^{n-1}\sum_{j=1}^{d-1} \sum_{d\leq i< \ell \leq n-1} a_{ij\nu}a_{\ell j\nu} a_{id\nu}a_{\ell d\nu}\EE \phi^2_{i\nu k}\phi^2_{\ell \nu k'} 
\lesssim \sum_{j<d} \sum_{d\leq i< \ell \leq n-1} a_{ij\nu}a_{\ell j\nu} a_{id\nu}a_{\ell d\nu}\\
= & \sum_{d=1}^{n-1}\sum_{j=1}^{d-1} \sum_{d\leq i< \ell \leq n-1} (1-\gamma \mu_\nu)^{i-j}(1-\gamma \mu_\nu)^{\ell - j} (1-\gamma \mu_\nu)^{i-d}(1-\gamma \mu_\nu)^{\ell-d} \\
= & 2 \sum_{d=1}^{n-1}\sum_{j=1}^{d-1} \big[\sum_{d\leq i < \ell \leq n-1} (1-\gamma \mu_\nu)^{2(i-d)}(1-\gamma \mu_\nu)^{2(\ell-d)} \big](1-\gamma \mu_\nu)^{2(d-j)}\\
\leq & 2\big( \sum_{d=1}^{n-1}\sum_{j=1}^{d-1} (1-\gamma \mu_\nu)^{2(d-j)}\big)\big(\sum_{i=d}^{n-1}(1-\gamma \mu_\nu)^{2(i-d)}\big)\big(\sum_{\ell=d}^{n-1}(1-\gamma \mu_\nu)^{2(\ell-d)}\big)
\lesssim n(\gamma \mu_\nu)^{-3}.
\end{align*}
Then accordingly, 
\begin{equation}\label{eq:delta_21}
\EE \Delta_{21}\leq \gamma^{-2}\sum_{k,k'=1}^\infty b_k b_{k'} \sqrt{\sum_{\nu=1}^\infty \mu_\nu^{\frac{1+2\epsilon}{\alpha}} } \cdot \sqrt{\sum_{\nu=1}^\infty \mu_\nu^{4-\frac{1+2\epsilon}{\alpha}}|\sum_{j=1}^{n-1}\sum_{j\leq i< \ell \leq n-1} a_{ij\nu}a_{\ell j\nu} \phi_{i\nu k}\phi_{\ell \nu k'}|^2}
\lesssim  (n\gamma)^{\frac{2}{\alpha}}\sqrt{n}\gamma^{-\frac{7}{2}}. 
\end{equation}
For $\Delta_{22}$, we have 
\begin{align*}
& \gamma^{-2}\sum_{k,k'=1}^\infty b_k b_{k'} \sum_{\nu=1}^\infty \mu_\nu^2 \sum_{j=1}^{n-1} \sum_{i=j}^{n-1} a^2_{ij\nu}\phi_{i\nu k}\phi_{i \nu k'}\\
= & \gamma^{-2} \sum_{k,k'=1}^\infty b_k b_{k'} \sum_{\nu=1}^\infty \mu_\nu^2 \sum_{j=1}^{n-1} \sum_{i=j}^{n-1} a^2_{ij\nu} (\phi_{i\nu k}\phi_{i \nu k'}- \EE(\phi_{i\nu k}\phi_{i \nu k'}))\\ + &  \gamma^{-2} \sum_{k,k'=1}^\infty b_k b_{k'} \sum_{\nu=1}^\infty \mu_\nu^2 \sum_{j=1}^{n-1} \sum_{i=j}^{n-1} a^2_{ij\nu} \EE(\phi_{i\nu k}\phi_{i \nu k'})\\
= & \Delta_{22}^{(1)}+ \Delta_{22}^{(2)}. 
\end{align*}
We first bound $| \Delta_{22}^{(1)} | $. 
\begin{align*}
& | \Delta_{22}^{(1)} | 
\leq & \gamma^{-2}
 \sum_{k,k'=1}^\infty b_{k}b_{k'}\sqrt{\sum_{\nu=1}^\infty \mu_\nu^{\frac{1+2\epsilon}{\alpha}} }\sqrt{ \sum_{\nu=1}^\infty  \mu_\nu^{4-\frac{1+2\epsilon}{\alpha}}\big(\sum_{j=1}^{n-1} \sum_{i=j}^{n-1} a^2_{ij\nu} (\phi_{i\nu k}\phi_{i \nu k'}- \EE(\phi_{i\nu k}\phi_{i \nu k'}))\big)^2}
\end{align*} 
Notice that 
\begin{align*}
& \EE\big( \sum_{j=1}^{n-1} \sum_{i=j}^{n-1}  a^2_{ij\nu} (\phi_{i\nu k}\phi_{i \nu k'}- \EE(\phi_{i\nu k}\phi_{i \nu k'})) |^2
=  \sum_{i=1}^{n-1} \EE \big(\sum_{j=1}^{i}  a^2_{ij\nu} (\phi_{i\nu k}\phi_{i \nu k'}- \EE(\phi_{i\nu k}\phi_{i \nu k'})) \big)^2\\
= & \sum_{i=1}^{n-1} \EE \big(\sum_{j=1}^{i}  a^2_{ij\nu} (\phi_{i\nu k}\phi_{i \nu k'}- \EE(\phi_{i\nu k}\phi_{i \nu k'}))\big) ^2 \\
& + 2\sum_{1\leq i_1< i_2\leq n-1} 
\big( \sum_{j=1}^{i_1} a^2_{i_1j\nu} (\phi_{i_1\nu k}\phi_{i_1 \nu k'}- \EE(\phi_{i_1\nu k}\phi_{i_1 \nu k'}))\big) \cdot \big( \sum_{j=1}^{i_2}  a^2_{i_2j\nu} (\phi_{i_2\nu k}\phi_{i_2 \nu k'}- \EE(\phi_{i_2\nu k}\phi_{i_2 \nu k'}))\big)\\
= & \sum_{i=1}^{n-1} \EE \big(\sum_{j=1}^{i}  a^2_{ij\nu} (\phi_{i\nu k}\phi_{i \nu k'}- \EE(\phi_{i\nu k}\phi_{i \nu k'}))\big) ^2,
\end{align*}
since $\EE \big(\phi_{i_1\nu k}\phi_{i_1 \nu k'}- \EE(\phi_{i_1\nu k}\phi_{i_1 \nu k'})\big)=0$. Then we have 

$$
\EE\big( \sum_{j=1}^{n-1} \sum_{i=j}^{n-1}a^2_{ij\nu} (\phi_{i\nu k}\phi_{i \nu k'}- \EE(\phi_{i\nu k}\phi_{i \nu k'})) \big)^2
\lesssim  \sum_{i=1}^{n-1}(\sum_{j=1}^i a^2_{ij\nu})^2  \EE (\phi_{i\nu k}\phi_{i \nu k'}- \EE(\phi_{i\nu k}\phi_{i \nu k'}))^2
\lesssim n (\gamma^{-1}\mu_\nu^{-1})^2 
$$
due to the property that $\sum_{j=1}^i a^2_{ij\nu} = \sum_{j=1}^i (1-\gamma \mu_\nu)^{2(i-j)}\leq  \gamma^{-1}\mu_\nu^{-1}$. Accordingly, we have 
$$
 \sum_{k,k'=1}^\infty b_{k}b_{k'} \sqrt{\sum_{\nu=1}^\infty \mu_\nu^{\frac{1+2\epsilon}{\alpha}} }\sqrt{ \sum_{\nu=1}^\infty  \mu_\nu^{4-\frac{1+2\epsilon}{\alpha}}\big(\sum_{j=1}^{n-1} \sum_{i=j}^{n-1} a^2_{ij\nu} (\phi_{i\nu k}\phi_{i \nu k'}- \EE(\phi_{i\nu k}\phi_{i \nu k'}))\big)^2}
= O_P(\sqrt{n}\gamma^{-1}(n\gamma)^{\frac{2}{\alpha}}). 
$$
Therefore, 
\begin{equation}\label{eq:delta22_1}
\EE \Delta^{(1)}_{22}\lesssim \sqrt{n}\gamma^{-3}(n\gamma)^{\frac{2}{\alpha}}. 
\end{equation}

We next deal with $\Delta^{(2)}_{22}$. 
\begin{align}
&\EE  \Delta^{(2)}_{22}
= \gamma^{-2} \sum_{k,k'=1}^\infty   \sum_{\nu=1}^\infty \mu_\nu^2 \sum_{j=1}^{n-1} \sum_{i=j}^{n-1} a^2_{ij\nu}b_{ik}b_{ik'} \EE(\phi_{i\nu k}\phi_{i \nu k'}) \nonumber\\
\leq & \gamma^{-2}\sum_{k,k'=1}^\infty \sum_{\nu=1}^\infty\mu_\nu^2 \sum_{j=1}^{n-1}\sum_{i=j}^{n-1} a^2_{ij\nu} b_{ik}b_{ik'}\EE(\phi^2_\nu(X_{i+1})\phi_k(X_{i+1})\phi_{k'}(X_{i+1}))\nonumber\\
= & \gamma^{-2} \sum_{\nu=1}^\infty\mu_\nu^2 \sum_{j=1}^{n-1}\sum_{i=j}^{n-1} a^2_{ij\nu} \EE \big(\phi^2_\nu(X_{i+1})\cdot\big(\sum_{k=1}^\infty b_{ik}\phi_k(X_{i+1})\big)^2\big)\nonumber\\
\leq& c_\phi^2\gamma^{-2} \sum_{\nu=1}^\infty\mu_\nu^2 \sum_{j=1}^{n-1}\sum_{i=j}^{n-1} a^2_{ij\nu} \EE \big(\sum_{k=1}^\infty b_{ik}\phi_k(X_{i+1})\big)^2
= c_\phi^2\gamma^{-2}  \sum_{\nu=1}^\infty\mu_\nu^2 \sum_{j=1}^{n-1}\sum_{i=j}^{n-1} a^2_{ij\nu} \sum_{k=1}^\infty b^2_{ik} \lesssim  \gamma^{-3}   n (n\gamma)^{\frac{1}{\alpha}} \label{eq:delta_22_2}
\end{align}
Combine equation (\ref{eq:delta_1}), (\ref{eq:delta_21}), (\ref{eq:delta22_1}), and (\ref{eq:delta_22_2}) together, and notice that 
 $\sqrt{n}\gamma^{-\frac{7}{2}}(n\gamma)^{\frac{2}{\alpha}} \leq n\gamma^{-3}(n\gamma)^{\frac{1}{\alpha}}$ for $\gamma \geq n^{-1}$, we have 
$$
\EE \sum_{j=1}^{n-1} g^2_j(s)\leq n\gamma^{-3}(n\gamma)^{\frac{1}{\alpha}}. 
$$
Define an event $\mathcal{E}_1= \{\sum_{j=1}^{n-1} g^2_j(s)\leq \gamma^{-7/2} n (n\gamma)^{1/\alpha}\} $, by Markov inequality, $\PP\big(\mathcal{E}_1\big)> 1-\gamma^{1/2}$. 
Conditional on the event $\mathcal{E}_1$, and let $u=C n^{-\frac{1}{2}}\gamma^{\frac{1}{4}}(n\gamma)^{\frac{1}{2\alpha}}\sqrt{\log n}$ in equation (\ref{eq:noise1:con:original}), we have 
\begin{equation}\label{eq:noise1:con}
\PP \Big( \frac{\gamma^2}{n}\bigl|\sum_{j=1}^{n-1} \epsilon_j\cdot g_{j}(s)\bigr| > C n^{-\frac{1}{2}}\gamma^{\frac{1}{4}}(n\gamma)^{\frac{1}{2\alpha}}\sqrt{\log n} \bigm| \mathcal{E}_1\Big) \leq \exp\Big(-C' \log n\Big). 
\end{equation}
Combined with the Lemma bridging $\bar{\eta}_n^{noise,1}(t)$ and $\|\bar{\eta}_n^{noise,1}\|_\infty$ 
in Supplementary \cite{liu2023supp}, we achieve the result.

\underline{\bf Next, we prove Lemma \ref{app:le:noise_rem:con} (b) and analyze $\|\bar{\eta}_n^{noise,d}\|_{\infty}$  for $d\geq 2$.} 

Note that $\|\bar{\eta}_n^{noise,d}\|_{\infty} \leq \|\Sigma^a \bar{\eta}_n^{noise,d}\|_\mathbb{H}$. In the following part, we focus on $\EE \|\Sigma^a \bar{\eta}_n^{noise,d}\|^2_\mathbb{H}$. 
Recall in Section \ref{sec:proof_sketch}, $\eta_n^{noise,d}$ follows the recursion as 
$
\eta_n^{noise,d} = (I-\gamma \Sigma)\eta_{n-1}^{noise,d} + \gamma \mathcal{E}_n^{d},
$
where $\mathcal{E}_n^{d} = (\Sigma -K_{X_n}\times K_{X_n})\eta_{n-1}^{noise,d}$ for $d\geq 1$ and $\mathcal{E}_n^{d} = \varepsilon_n$ for $d=0$. 

Let $T=I-\gamma \Sigma$, then $\eta_j^{noise,d} =\gamma \sum_{k=1}^j T^{j-k}\mathcal{E}_k^{d}$, and $\bar{\eta}_n^{noise,d} = \gamma \frac{1}{n}\sum_{j=1}^n \sum_{k=1}^j T^{j-k}\mathcal{E}_k^{d}$. 
\begin{align*}
&  \EE \langle \bar{\eta}_n^{noise,d}, \Sigma^{2a}\bar{\eta}_n^{noise,d} \rangle \\
= & \frac{\gamma^2}{n^2} \EE \langle \sum_{j=1}^n \sum_{k=1}^j T^{j-k} \mathcal{E}_k^{d}, \Sigma^{2a}\sum_{j=1}^n \sum_{k=1}^j T^{j-k}\mathcal{E}_k^{d} \rangle\\
= & \frac{\gamma^2}{n^2} \sum_{k=1}^n \EE \langle  \sum_{k=1}^n (\sum_{j=k}^n T^{j-k}) \mathcal{E}_k^{d}, \Sigma^{2a}\sum_{k=1}^n (\sum_{j=k}^n T^{j-k})\mathcal{E}_k^{d}\rangle \\
= & \frac{\gamma^2}{n^2} \sum_{k=1}^n \EE \langle M_{n,k}\mathcal{E}_k^{d}, \Sigma^{2a}M_{n,k}\mathcal{E}_k^{d} \rangle 
=  \frac{\gamma^2}{n^2} \sum_{k=1}^n \EE \tr(\mathcal{E}_k^{d} M_{n,k}\Sigma^{2a}M_{n,k}\mathcal{E}_k^{d})
=  \frac{\gamma^2}{n^2} \sum_{k=1}^n \EE \tr(M_{n,k}\Sigma^{2a}M_{n,k}\mathcal{E}_k^{d} \otimes \mathcal{E}_k^{d})\\
= & \frac{\gamma^2}{n^2} \sum_{k=1}^n  \tr(M_{n,k}\Sigma^{2a}M_{n,k}\EE\big(\mathcal{E}_k^{d} \otimes \mathcal{E}_k^{d})\big)
\lesssim  \frac{\gamma^{2+d}}{n^2} \sum_{k=1}^n \tr\big(M_{n,k} \Sigma^{2a} M_{n,k}\Sigma \big)
\end{align*}
where we use the property that $\EE\big(\mathcal{E}_k^{d} \otimes \mathcal{E}_k^{d}\big)\lesssim \gamma^d\Sigma$. 
Since $M_{n,k}= \sum_{j=k}^n T^{j-k} = I + T + T^2 + \cdots + T^{n-k} \leq nI$,
 then $M_{n,k}\Sigma^{2a} \leq n \Sigma^{2a}$. On the other hand, 
$M_{n,k}\Sigma^{2a} = \gamma^{-1}\Sigma^{-1}(I-T^{n-k})\Sigma^{2a} \preceq \gamma^{-1}\Sigma^{2a-1}$. Therefore, we have 
$$
M_{n,k}\Sigma^{2a} \preceq (n\Sigma^{2a})^{q}(\gamma^{-1}\Sigma^{2a-1})^{1-q} 
$$
with $0\leq q \leq 1$. 
Also, $M_{n,k}\Sigma \preceq \gamma^{-1}\Sigma^{-1}(I-T^{n-k})\Sigma \preceq \gamma^{-1}I$. Then 
$$
\tr\big(M_{n,k} \Sigma^{2a} M_{n,k}\Sigma \big) \leq 
n^q \gamma^{q-1}\gamma^{-1}\sum_{\nu=1}^\infty \mu_\nu^{2aq+ (2a-1)(1-q)}. 
$$
Therefore, 
$
\EE \langle \bar{\eta}_n^{noise,d}, \Sigma^{2a}\bar{\eta}_n^{noise,d} \rangle \lesssim 
\frac{\gamma^{2+d}}{n^2} n \gamma^{-1}n^{q}\gamma^{q-1}\sum_{\nu=1}^\infty \mu_\nu^{2a-1+q}
\leq (n\gamma)^q n^{-1}\gamma^d \sum_{\nu=1}^\infty \mu_\nu^{2a-1+q}.
$
Let $2a-1+q=1/\alpha + \varepsilon$ with $a=1/2-1/(2\alpha)-\varepsilon$ and $\varepsilon \to 0$, then we have $\sum_{\nu=1}^\infty \mu_\nu^{2a-1+q}=\sum_{j=1}^\infty \mu_j^{1/\alpha+\varepsilon} = \sum_{j=1}^\infty j^{-1-\alpha \varepsilon} < \infty$ and 
$$
\EE \langle \bar{\eta}_n^{noise,d}, \Sigma^{2a}\bar{\eta}_n^{noise,d} \rangle \lesssim (n\gamma)^{1/\alpha} n^{-1} (n\gamma)^{1/\alpha + 2 \varepsilon}\gamma^{d}. 
$$
Through Markov's inequality, we have 
\begin{align*}
 \PP\Big( \|\bar{\eta}_n^{noise,d}\|^2_{\infty} \geq \gamma^{1/4}(n\gamma)^{1/\alpha} n^{-1} \Big)  
\leq &  \frac{\EE\|\Sigma^a \bar{\eta}_n^{noise,d}\|^2_{\mathbb{H}}}{\gamma^{1/4}(n\gamma)^{1/\alpha} n^{-1}}\leq  (n\gamma)^{1/\alpha + 2 \varepsilon}\gamma^{d-1/4}.
\end{align*}
For $d\geq 2$ and $0<\gamma < n^{-\frac{2}{2+3\alpha}}$, we have $(n\gamma)^{1/\alpha + 2 \varepsilon}\gamma^{d-1/4}\leq 1/2\gamma^{1/4}$.

\underline{\bf Proof of Lemma \ref{app:le:noise_rem:con} (c) - the reminder term $\bar{\eta}_n^{noise} - \sum_{d=0}^r \bar{\eta}_n^{noise,d}$}. 
Note that for any $f\in \mathbb{H}$, $|f(x)| = |\langle f, K_x \rangle_\mathbb{H}| \leq |K_x|_\mathbb{H} \|f\|_\mathbb{H} \leq C \|f\|_\mathbb{H}$.  Therefore, $\|\bar{\eta}_n^{noise} - \sum_{d=0}^r \bar{\eta}_n^{noise,d}\|_{\infty} \leq \|\bar{\eta}_n^{noise} - \sum_{d=0}^r \bar{\eta}_n^{noise,d}\|_{\mathbb{H}}$. Next, we will bound $\|\bar{\eta}_n^{noise} - \sum_{d=0}^r \bar{\eta}_n^{noise,d}\|_{\mathbb{H}}$. 

For $i=1,\dots,n$, recall
$
\eta_i^{noise} - \sum_{d=0}^r \eta_i^{noise,d} = (I-\gamma  K_{X_i}\otimes K_{X_i})(\eta_{i-1}^{noise} - \sum_{d=0}^r \eta_{i-1}^{noise,d}) + \gamma \mathcal{E}_i^{r+1},
$
we have 
$$
\|\eta_i^{noise} - \sum_{d=0}^r \eta_i^{noise,d}\|_\mathbb{H} \leq \|\eta_{i-1}^{noise} - \sum_{d=0}^r \eta_{i-1}^{noise,d}\|_\mathbb{H} + \gamma \|\mathcal{E}_i^{r+1}\|_\mathbb{H}\leq \sum_{j=1}^i \gamma\|\mathcal{E}_j^{r+1}\|_\mathbb{H}.  
$$
Accordingly, 
$\EE \|\eta_i^{noise} - \sum_{k=0}^d \eta_i^{noise,k}\|_\mathbb{H}^2 \leq \gamma^2 \sum_{j=1}^i \big(\sum_{j=1}^i  \EE \|\mathcal{E}_j^{d+1}\|^2_\mathbb{H} \big)  %
$.  Since $\EE \|\mathcal{E}_j^{d+1}\|_\mathbb{H}^2 = \EE \tr(\mathcal{E}_j^{d+1}\otimes \mathcal{E}_j^{d+1}) = \tr \EE(\mathcal{E}_j^{d+1}\otimes \mathcal{E}_j^{d+1}) \leq \sigma^2 \gamma^{d+1}R^{2d+2}\tr(\Sigma)$, we have 
\begin{align*}
\EE \|\eta_i^{noise} - \sum_{d=0}^r \eta_i^{noise,d}\|_\mathbb{H}^2 \leq  \gamma^2 i^2 \sigma^2 \gamma^{r+1}R^{2r+2}\tr(\Sigma),
\end{align*}
and accordingly 
\begin{align}
\EE \|\bar{\eta}_n^{noise} - \sum_{d=0}^r \bar{\eta}_n^{noise,d}\|_\mathbb{H}^2 \leq &  \frac{2}{n}\sum_{i=1}^n \EE \|\eta_i^{noise} - \sum_{d=0}^r \eta_i^{noise,d}\|_\mathbb{H}^2 \nonumber\\
\leq &  \sigma^2 \gamma^{r+3}R^{2r+2}\tr(\Sigma) \frac{1}{n}\sum_{i=1}^n 
i^2  \leq \sigma^2 \gamma^{r+3}R^{2r+2}\tr(\Sigma) n^2. \label{eq:app:remind_b}
\end{align}
By Markov inequality, 
$$
\PP\Big( \|\bar{\eta}_i^{noise} - \sum_{d=0}^r \bar{\eta}_i^{noise,d}\|^2_{\infty} \geq \gamma^{1/4}(n\gamma)^{1/\alpha} n^{-1} \Big) \leq \frac{\EE\|\bar{\eta}_n^{noise} - \sum_{d=0}^r \bar{\eta}_n^{noise,d}\|^2_{\mathbb{H}}}{\gamma^{1/4}(n\gamma)^{1/\alpha} n^{-1}} \leq 1/n
$$
with the constant $r$ large enough. 

Finally, we have 
\begin{align*}
& \PP\Big( \|\bar{\eta}_n^{noise} - \bar{\eta}_n^{noise,0}\|^2_{\infty}  \geq (r+1)\gamma^{1/4}(n\gamma)^{1/\alpha} n^{-1} 
 \Big)\\
 \leq & \sum_{d=1}^r\PP\Big( \|\bar{\eta}_n^{noise,d}\|^2_{\infty}  \geq \gamma^{1/4}(n\gamma)^{1/\alpha} n^{-1} 
 \Big) + \PP\Big( \|\bar{\eta}_i^{noise} - \sum_{d=0}^r \bar{\eta}_i^{noise,d}\|^2_{\infty} \geq \gamma^{1/4}(n\gamma)^{1/\alpha} n^{-1} \Big)
\leq \gamma^{1/4} . 
\end{align*}
\end{proof}

\subsection{Bootstrap SGD decomposition}\label{app:bSGD:decomp:con}
Similar to the SGD recursion decomposition in Section \ref{sec:proof_sketch}, we define the Bootstrap SGD recursion decomposition as follows. 
Based on (\ref{eq:bootstrap:sgd}), denote $\eta_n^b = \widehat{f}_n^b - f^\ast$, then 
\begin{equation}\label{eq:boot:recursion}
\eta_n^b = (I - \gamma_n w_n K_{X_n}\otimes K_{X_n}) (f_{n-1}^b - f^\ast) + \gamma_n w_n^b \epsilon_n K_{X_n}.
\end{equation} 
We split the recursion (\ref{eq:boot:recursion}) in two  recursions $\eta_n^{b,bias}$ and $\eta_n^{b,noise}$ such that $\eta_n^b = \eta_n^{b,bias} + \eta_n^{b,noise}$. Specifically, 
\begin{align}
    \eta_n^{b,bias} = & (I - \gamma_n w_n K_{X_n}\otimes K_{X_n}) \eta^{b,bias}_{n-1} \quad \textrm{with} \quad \eta_0^{b,bias} = f^\ast, \label{eq:eta:init}\\
    \eta_n^{b,noise} = & (I - \gamma_n w_n K_{X_n}\otimes K_{X_n}) \eta^{b,noise}_{n-1} + \gamma_n w_n \epsilon_n K_{X_n} \quad \textrm{with} \quad \eta_0^{b,noise} = 0 . \label{eq:eta:noise}
\end{align} 
Since $\EE[w_n K_{X_n}\otimes K_{X_n}] = \Sigma$, we further 
decompose $\eta_n^{b,bias}$ to two parts:  
(1) its main recursion terms which determine the bias order; (2) residual recursion terms. That is,   
\begin{align*}
 \eta_n^{b,bias,0}= & (I - \gamma_n \Sigma) \eta_{n-1}^{b,bias,0} \quad \quad \textrm{with} \quad  \eta_{0}^{b,bias,0}=f^\ast \\
 \eta_n^{b,bias} - \eta_n^{b,bias,0} = & (I- \gamma_n w_n K_{X_n}\otimes K_{X_n}) (\eta_{n-1}^{b,bias} - \eta_{n-1}^{b,bias,0}) + \gamma_n (\Sigma - w_n  K_{X_n}\otimes K_{X_n} )\eta_{n-1}^{b,bias,0}, 
\end{align*}
Similarly, we decompose $\eta_n^{b,noise}$ to its main recursion term that dominates the variation and residual recursion terms as 
\begin{align}
\eta_n^{b,noise,0} = & (I - \gamma_n\Sigma) \eta^{b,noise,0}_{n-1} + \gamma_n w_n \epsilon_n K_{X_n} \label{eq:noise:main:boot}\\
\eta_n^{b,noise} - \eta_n^{b,noise,0} = &  (I - \gamma_n w_n K_{X_n}\otimes K_{X_n}) (\eta_{n-1}^{b,noise} - \eta_{n-1}^{b,noise,0})
+ \gamma_n (\Sigma - w_n K_{X_n}\otimes K_{X_n} ) \eta_{n-1}^{b, noise, 0}, \nonumber
\end{align}
with $\eta_0^{b,noise,0}=0$. 

We aim to quantify the distribution behavior of $(\bar{f}_n^b - \bar{f}_n)  \mid \mathcal{D}_n$ given $\mathcal{D}_n$. Denote $\bar{\eta}_n^b=\frac{1}{n}\sum_{i=1}^n \widehat{f}_i^b$. Then 
\begin{align*}
\bar{f}_n^b -\bar{f}_n = & \bar{\eta}_n^b  - \bar{\eta}_n = \frac{1}{n}\sum_{i=1}^n \big(\widehat{f}_i^b - f^\ast\big) - \frac{1}{n}\sum_{i=1}^n \big(\widehat{f}_i - f^\ast \big) \\
= & \underbrace{\bar{\eta}_n^{b,bias,0}- \bar{\eta}_n^{bias,0}}_{\textrm{leading bias}} + \underbrace{\bar{\eta}_n^{b,noise,0}-\bar{\eta}_n^{noise,0}}_{\textrm{leading noise}} + \underbrace{Rem_{noise}^b + Rem_{bias}^b- Rem_{noise} - Rem_{bias}}_{\textrm{negligible terms}}, 
\end{align*}
where $Rem^b_{noise} = \bar{\eta}_n^{b,noise} - \bar{\eta}_n^{b,noise,0} $,  $Rem^b_{bias} = \bar{\eta}_n^{b,bias} - \bar{\eta}_n^{b,bias,0}$, and $Rem_{noise}, Rem_{bias}$ are remainder terms in original SGD recursion with  
$Rem_{noise} = \bar{\eta}_n^{noise}-\bar{\eta}_n^{noise,0}$ (bounded in Section \ref{app:le:noise_rem:con}),  $Rem_{bias} = \bar{\eta}_n^{bias}-\bar{\eta}_n^{bias,0}$ (bounded in Section \ref{app:le:rem_bias:con}).

Since $\bar{\eta}_n^{b,bias,0}$ and  $\bar{\eta}_n^{bias,0}$ follow the same recursion, we have the leading bias of $\bar{f}_n^b -\bar{f}_n$ as 0. We next need to: (1) characterize the distribution behavior of $\bar{\eta}_n^{b,noise,0} - \bar{\eta}_n^{noise,0}$ conditional on $\mathcal{D}_n$; and (2) prove the term $Rem_{noise}^b + Rem_{bias}^b- Rem_{noise} - Rem_{bias}$ are negligible.

In the following, we provide a clear express on $\bar{\eta}_n^{b,noise,0} - \bar{\eta}_n^{noise,0}$. 
Similar to the expression of $\eta_n^{noise,0} = \sum_{i=1}^n D(i+1,n, \gamma_i) \gamma_i \epsilon_i K_{X_i}$ in (\ref{eq:noise:lead:exp}), 
A simple calculation from the recursion (\ref{eq:noise:main:boot}) shows that 
$
\eta_n^{b,noise,0} =  \sum_{i=1}^n D(i+1,n, \gamma_i) \gamma_i w_i \epsilon_i K_{X_i}. 
$ 
Accordingly, 
$$
\eta_n^{b,noise,0} - \eta_n^{noise,0} =  \sum_{i=1}^n D(i+1,n, \gamma_i) \gamma_i (w_i -1)  \epsilon_i K_{X_i} .
$$
Then 
\begin{align}
& \bar{\eta}_n^{b,noise,0} - \bar{\eta}_n^{noise,0} \nonumber\\
= & \frac{1}{n}\sum_{j=1}^n \sum_{i=1}^j  D(i+1,j, \gamma_i) \gamma_i (w_i -1)  \epsilon_i K_{X_i} 
= \frac{1}{n} \sum_{i=1} ^n \big(\sum_{j=i}^n D(i+1,j, \gamma_i) \big) \gamma_i (w_i -1)  \epsilon_i K_{X_i}. \label{eq:noise:lead}
\end{align}

\subsection{Proof of the Bootstrap consistency in Theorem \ref{thm:global:main1} for constant step size case}\label{pf:thm:global:con}
We follow the proof sketch in Section \ref{sec:sketch:pf:GA} and complete the proof of Step \RNum{2}, \RNum{3} and \RNum{4} in this section. 

For the reader's convenience, we restate the following notations. Denote 
\begin{equation*}
  \begin{array}{rcl@{\qquad}rcl}
    \bar{\alpha}_n (\cdot) & = & \frac{1}{\sqrt{n(n\gamma)^{1/\alpha}}}\sum_{i=1}^n \epsilon_i \cdot \Omega_{n,i}(\cdot) 
    &\bar{\alpha}_n^b(\cdot) & = &  \frac{1}{\sqrt{n(n\gamma)^{1/\alpha}}}\sum_{i=1}^n (w_i-1)\cdot \epsilon_i \cdot \Omega_{n,i}(\cdot) \\
    \bar{\alpha}_n^e (\cdot)  & = & \frac{1}{\sqrt{n(n\gamma)^{1/\alpha}}}\sum_{i=1}^n e_i \cdot \epsilon_i \cdot \Omega_{n,i}(\cdot)
    & \bar{Z}_n (\cdot) & = &  \frac{1}{\sqrt{n(n\gamma)^{1/\alpha}}}\sum_{i=1}^n Z_{i}(\cdot)
  \end{array}
\end{equation*}
where $e_i$'s, for $i=1,\cdots, n$, are i.i.d.~standard normal random variables, and $Z_{i}(t) \sim N \big(0, (n\gamma)^{-1/\alpha} \sum_{\nu=1}^\infty (1-(1-\gamma \mu_\nu)^{n-i})^2\phi_\nu^2(t)\big)$ satisfying 
$\EE \big(Z_{i}(t_k)\cdot Z_{i}(t_\ell)\big) =(n\gamma)^{-1/\alpha} \sum_{\nu=1}^\infty (1-(1-\gamma \mu_\nu)^{n-i})^2\phi_\nu(t_k)\phi_\nu(t_\ell)$, and $\EE  \big(Z_{i}(t_k)\cdot Z_{j}(t_\ell)\big) = 0 $ for $i\neq j$.

\begin{Lemma}\label{app:le:GP:atoz:con} (Proof of Step \RNum{2})
Suppose $\alpha>2$ and $\gamma= n^{-\xi}$ with $\xi > \max\{1-\alpha/3, 0\}$. We have 
\begin{equation}\label{eq:max:1}
\sup_{\nu\in\bbR}\Big|\PP(\max_{1\le k\le N} \bar{\alpha}_n(t_k)\le\nu)
-\PP(\max_{1\le k\le N}\bar{Z}_n(t_k)\le\nu)\Big|\leq \frac{(\log N)^{3/2}}{\big(n(n\gamma)^{-3/\alpha}\big)^{1/8}},
\end{equation}
which converges to $0$ with increased $n$.
\end{Lemma}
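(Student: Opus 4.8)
Write $X_{ik} = \bigl(n(n\gamma)^{1/\alpha}\bigr)^{-1/2}\,\epsilon_i\,\Omega_{n,i}(t_k)$, so that $\bar{\alpha}_n(t_k) = \sum_{i=1}^n X_{ik}$ and $\bar{Z}_n(t_k) = \sum_{i=1}^n Z_i(t_k)$, where the $\bbR^N$-valued vectors $(X_{i1},\dots,X_{iN})$ are independent over $i$ and mean zero, and the independent Gaussian vectors $(Z_i(t_1),\dots,Z_i(t_N))$ are, by construction, chosen to have the same covariance matrix as $(X_{i1},\dots,X_{iN})$ --- note the randomness here is over \emph{both} the noises $\epsilon_i$ and the design $X_i$ (through $\Omega_{n,i}$), so $Z_i$ carries the marginal (design-averaged) covariance. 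The plan is to run the Stein/Lindeberg interpolation argument behind the Gaussian approximation results of \cite{chernozhukov2014gaussian}, extended to this weighted, non-identically distributed array; the very same scheme is reused, with $(w_i-1)\epsilon_i$ replacing $\epsilon_i$ and everything conditioned on $\mathcal{D}_n$, for the bootstrap comparison in Step~\RNum{4}. (One could alternatively exploit that $\bar{\alpha}_n$ is exactly Gaussian given the design and cast Step~\RNum{2} as a Gaussian comparison; I present the Lindeberg route since it is the one shared with Step~\RNum{4}.)

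\textbf{Interpolation and moment inputs.} Introduce the smooth maximum $F_\beta(x) = \beta^{-1}\log\sum_{k=1}^N e^{\beta x_k}$ and a smooth surrogate of $\mathds{1}\{\,\cdot\le\nu\}$, interpolate along $\sqrt{t}\,\bar{\alpha}_n + \sqrt{1-t}\,\bar{Z}_n$, $t\in[0,1]$, differentiate in $t$, and Taylor-expand in the $i$-th summand; the first- and second-order terms drop out by the matched first two moments, leaving a remainder controlled by the third derivative of $F_\beta$ against the maximal third moments of the $X_{ik}$, plus an anti-concentration term of size $\beta^{-1}\sqrt{\log N}$ from Nazarov's inequality applied to $\max_k\bar{Z}_n(t_k)$ --- legitimate because $\min_k\Var\bigl(\bar{Z}_n(t_k)\bigr)=\min_k\Var\bigl(\bar{\alpha}_n(t_k)\bigr)$ is bounded below by a positive constant uniformly over $k$, by the variance bound $\Var(\bar{\eta}_n^{noise,0}(z_0))\asymp(n\gamma)^{1/\alpha}/n$ established in Section~\ref{append:proof:lemma:bias_variance:1}. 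Since the $X_{ik}$ are unbounded (Gaussian $\epsilon_i$ times the bounded $\Omega_{n,i}$), a preliminary truncation, replacing $\epsilon_i$ by $\epsilon_i\,\mathds{1}\{|\epsilon_i|\le\tau\sqrt{\log n}\}$, is inserted and the discarded mass absorbed via Gaussian tail bounds. The remaining task is to estimate, in terms of $(n,\gamma,\alpha)$, the maximal post-truncation a.s.\ bound, the maximal second moment, and the maximal third moment of the $X_{ik}$, and to balance these against $\beta$. For these, Assumption~\ref{asmp:A1} ($\sup_\nu\|\phi_\nu\|_\infty\le c_\phi$) yields $\|\Omega_{n,i}\|_\infty \le c_\phi^2\sum_{\nu\ge1}\bigl[1-(1-\gamma\mu_\nu)^{n+1-i}\bigr]\lesssim\bigl((n-i)\gamma\bigr)^{1/\alpha}$, while $L^2(\PP_X)$-orthonormality of $\{\phi_\nu\}$ gives $\EE[\Omega_{n,i}(t_k)^2]=\sum_{\nu\ge1}\bigl[1-(1-\gamma\mu_\nu)^{n+1-i}\bigr]^2\phi_\nu^2(t_k)\lesssim\bigl((n-i)\gamma\bigr)^{1/\alpha}$ --- both using $\sum_{\nu\ge1}\min\{1,(m\gamma\mu_\nu)^2\}\asymp(m\gamma)^{1/\alpha}$ from Section~\ref{append:proof:lemma:bias_variance:1}. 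Crucially, these weights must be aggregated through $\sum_{i=1}^n\bigl((n-i)\gamma\bigr)^{c/\alpha}\asymp n(n\gamma)^{c/\alpha}$, and carrying this bookkeeping through the Stein optimization produces the bound $(\log N)^{3/2}\bigl(n(n\gamma)^{-3/\alpha}\bigr)^{-1/8}$ in \eqref{eq:max:1}; the exponent $1/8$ (in place of the $1/6$ of the bounded-entries theorem) and the $(\log N)^{3/2}$ factor are the cost of the truncation step needed for unbounded summands.

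\textbf{Vanishing of the bound; main obstacle.} Finally, with $\gamma=n^{-\xi}$ one has $n(n\gamma)^{-3/\alpha}=n^{1-3(1-\xi)/\alpha}$, which diverges precisely under the hypothesis $\xi>1-\alpha/3$, and since the Lipschitz bound in Assumption~\ref{asmp:A1} makes the $\varepsilon$-covering number $N=N(\varepsilon)$ of $[0,1]$ for the intrinsic metric $e_P$ polynomial in $n$, we have $(\log N)^{3/2}\lesssim(\log n)^{3/2}$, so the right-hand side of \eqref{eq:max:1} is $o(1)$; the standing assumption $\alpha>2$ is inherited from Theorem~\ref{thm:global:main1} and is not needed at this step in isolation. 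I expect the principal difficulty to be the interplay between the truncation level and the non-uniform weights $\{\Omega_{n,i}\}$: the early-arrived data points carry weights as large as $\|\Omega_{n,1}\|_\infty\asymp(n\gamma)^{1/\alpha}$, so a crude estimate replacing $\bigl((n-i)\gamma\bigr)^{c/\alpha}$ by its maximum over $i$ would wreck the rate; one must instead use the averaging identity $n^{-1}\sum_{i=1}^n\bigl((n-i)\gamma\bigr)^{c/\alpha}\asymp(n\gamma)^{c/\alpha}$ at each occurrence, and choose the truncation level $\tau\asymp\sqrt{\log n}$ just large enough to render the omitted Gaussian tail negligible without inflating the third-moment functional. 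This is exactly the ``weighted'' refinement of \cite{chernozhukov2014gaussian} advertised in the introduction; verifying that $\bar{Z}_n$ has the asserted marginal covariance and that the comparison holds uniformly in $\nu$ is routine.
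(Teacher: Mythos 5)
Your proposal follows essentially the same route as the paper's proof: the Chernozhukov--Chetverikov--Kato interpolation along $\sqrt{t}\,\bar{\balpha}_n+\sqrt{1-t}\,\bar{\bZ}_n$ with the smooth max $F_q$ and a smoothed indicator, cancellation of the first two Taylor terms by the matched means and covariances, control of the cubic remainder through the weighted third moments of $\epsilon_i\Omega_{n,i}(t_k)$, Gaussian anti-concentration for $\max_k\bar{Z}_n(t_k)$, and the same final balancing that yields $(\log N)^{3/2}\big(n(n\gamma)^{-3/\alpha}\big)^{-1/8}$. The only divergence is your truncation of $\epsilon_i$: the paper never truncates --- it factors out $\EE|\epsilon_i|^3$ using independence of the noise from the design and bounds $\EE\max_k|Z_{i,k}|^3$ by a $p$-th moment argument costing only $N^{3/p}$ with $p$ large --- so the exponent $1/8$ and the $(\log N)^{3/2}$ factor arise from balancing the $\psi_n^3\,n^{-1/2}(n\gamma)^{3/(2\alpha)}$ smoothing error against the $\psi_n^{-1}\sqrt{\log N}$ anti-concentration term (with $\psi_n=q=(n(n\gamma)^{-3/\alpha})^{1/8}$), not from any truncation cost.
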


\begin{Lemma}\label{app:le:GP:ztoe:con}(Proof of Step \RNum{3})
Suppose $\alpha>2$ and $\gamma= n^{-\xi}$ with $\xi > \max\{1-\alpha/3, 0\}$. 
With probability at least $1-\exp(-C \log n)$, 
$$
\sup_{\nu\in \mathbb{R}}\Big| \PP^*\Big(\max_{1\leq j \leq N} \bar{\alpha}^e_n(t_j) \leq \nu  \Big)  - 
\PP\Big(\max_{1\leq j \leq N} \bar{Z}_n(t_j) \leq \nu \Big) \Big|  \preceq \big((n\gamma)^{1/\alpha}n^{-1}\big)^{1/6}(\log n)^{1/3} (\log N)^{2/3}.
$$
\end{Lemma}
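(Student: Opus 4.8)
The key observation is that, conditionally on $\mathcal{D}_n=\{X_i,Y_i\}_{i=1}^n$, the vector $\big(\bar{\alpha}_n^e(t_1),\dots,\bar{\alpha}_n^e(t_N)\big)$ is a \emph{centered} Gaussian vector in $\mathbb{R}^N$: its only source of randomness is the i.i.d.\ standard normals $\{e_i\}$, with $\{\epsilon_i\}$ and $\{\Omega_{n,i}\}$ frozen, so its conditional covariance is $\widehat\Sigma_{k\ell}:=\mathrm{Cov}\big(\bar\alpha_n^e(t_k),\bar\alpha_n^e(t_\ell)\mid\mathcal{D}_n\big)=\frac{1}{n(n\gamma)^{1/\alpha}}\sum_{i=1}^n\epsilon_i^2\,\Omega_{n,i}(t_k)\Omega_{n,i}(t_\ell)$. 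On the other hand $\big(\bar{Z}_n(t_1),\dots,\bar{Z}_n(t_N)\big)$ is, by construction, the centered Gaussian vector whose covariance equals the \emph{population} covariance $\Sigma_{k\ell}:=\mathrm{Cov}\big(\bar\alpha_n(t_k),\bar\alpha_n(t_\ell)\big)=\frac{\sigma^2}{n(n\gamma)^{1/\alpha}}\sum_{i=1}^n\EE\big[\Omega_{n,i}(t_k)\Omega_{n,i}(t_\ell)\big]$. Thus Step~III reduces to a pure Gaussian comparison of two centered Gaussian maxima, for which I would invoke the Chernozhukov--Chetverikov--Kato comparison/anti-concentration bound: if $\underline\sigma^2\le \Sigma^{(1)}_{kk},\Sigma^{(2)}_{kk}\le\overline\sigma^2$ then
\begin{equation*}
\sup_{\nu\in\mathbb{R}}\big|\PP(\textstyle\max_k G^{(1)}_k\le\nu)-\PP(\max_k G^{(2)}_k\le\nu)\big|\ \lesssim\ \Delta^{1/3}\,\big(1\vee\log(N/\Delta)\big)^{2/3},\qquad \Delta:=\max_{k,\ell}\big|\Sigma^{(1)}_{k\ell}-\Sigma^{(2)}_{k\ell}\big|.
\end{equation*}
Applying this conditionally on $\mathcal{D}_n$ with $\Sigma^{(1)}=\widehat\Sigma$, $\Sigma^{(2)}=\Sigma$, the whole lemma is reduced to showing that, with probability at least $1-\exp(-C\log n)$ over $\mathcal{D}_n$,
\begin{equation*}
\Delta\ =\ \max_{k,\ell}\Big|\tfrac{1}{n(n\gamma)^{1/\alpha}}\textstyle\sum_{i=1}^n\big(\epsilon_i^2\,\Omega_{n,i}(t_k)\Omega_{n,i}(t_\ell)-\sigma^2\,\EE[\Omega_{n,i}(t_k)\Omega_{n,i}(t_\ell)]\big)\Big|\ \lesssim\ \sqrt{\tfrac{(n\gamma)^{1/\alpha}}{n}}\,\log n,
\end{equation*}
since then $\Delta^{1/3}(1\vee\log(N/\Delta))^{2/3}\lesssim\big((n\gamma)^{1/\alpha}n^{-1}\big)^{1/6}(\log n)^{1/3}(\log N)^{2/3}$ (using that $N$ is polynomial in $n$, so $\log N\asymp\log n$ and $\log(1/\Delta)\lesssim\log n$), which is precisely the asserted rate; the same bound also forces $\widehat\Sigma_{kk}$ to stay within a constant of $\Sigma_{kk}\asymp1$, so the lower-variance hypothesis of the comparison inequality holds on this event.

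\textbf{Controlling $\Delta$.} I would split the centered summand as $(\epsilon_i^2-\sigma^2)\Omega_{n,i}(t_k)\Omega_{n,i}(t_\ell)+\sigma^2\big(\Omega_{n,i}(t_k)\Omega_{n,i}(t_\ell)-\EE[\,\cdot\,]\big)$ and treat both pieces by a Bernstein inequality for independent sub-exponential summands, followed by a union bound over the $N^2$ grid pairs. The deterministic inputs are the kernel estimates furnished by Assumptions~\ref{asmp:A1}--\ref{asmp:A2} and Mercer's decomposition: uniformly in $x$, $|\Omega_{n,i}(x)|\le c_\phi^2\sum_\nu\min\{1,(n{+}1{-}i)\gamma\mu_\nu\}\asymp\big((n{+}1{-}i)\gamma\big)^{1/\alpha}$ and $\EE[\Omega_{n,i}^2(x)]=\sum_\nu\big(1-(1-\gamma\mu_\nu)^{n+1-i}\big)^2\phi_\nu^2(x)\asymp\big((n{+}1{-}i)\gamma\big)^{1/\alpha}$, whence $\sum_i\big((n{+}1{-}i)\gamma\big)^{j/\alpha}\asymp n(n\gamma)^{j/\alpha}$ for any fixed $j$. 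These give the per-term envelope $|\Omega_{n,i}(t_k)\Omega_{n,i}(t_\ell)|\lesssim(n\gamma)^{2/\alpha}$ (so each normalized term is $\lesssim(n\gamma)^{1/\alpha}n^{-1}$ up to the $\psi_1$-norm of $\epsilon_i^2$), and the variance proxy $\sum_i\mathrm{Var}\big(\epsilon_i^2\Omega_{n,i}(t_k)\Omega_{n,i}(t_\ell)\big)\lesssim\sum_i\EE[\Omega_{n,i}^4]\lesssim\sum_i\big((n{+}1{-}i)\gamma\big)^{3/\alpha}\asymp n(n\gamma)^{3/\alpha}$; after the $1/(n(n\gamma)^{1/\alpha})$ scaling the variance term is $\lesssim(n\gamma)^{1/\alpha}n^{-1}$ and the Bernstein ``sub-exponential tail'' contribution is $\lesssim(n\gamma)^{1/\alpha}n^{-1}\cdot\mathrm{polylog}(n)$, dominated by the square root of the variance term once $(n\gamma)^{1/\alpha}/n\to0$. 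Taking the deviation threshold $\asymp\sqrt{(n\gamma)^{1/\alpha}n^{-1}}\,\log n$ and union-bounding over the $N^2$ pairs with $\log N\lesssim\log n$ (the $\varepsilon$-covering number $N$ being polynomial in $n$ by the Lipschitz bound in Assumption~\ref{asmp:A1}, as already established for Steps~I and~V) yields the displayed bound on $\Delta$ on an event of probability $1-\exp(-C\log n)$. The hypotheses $\alpha>2$ and $\xi>\max\{1-\alpha/3,0\}$ (with $\gamma=n^{-\xi}$) guarantee $(n\gamma)^{1/\alpha}/n=n^{(1-\xi)/\alpha-1}\to0$ and, consistently with the companion Lemma~\ref{app:le:GP:atoz:con}, that the final Kolmogorov bound vanishes.

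\textbf{Main obstacle.} The Gaussian comparison step is a black-box citation; the crux is the uniform concentration of the empirical conditional covariance $\widehat\Sigma$ around the population covariance $\Sigma$, simultaneously over all $N^2$ grid pairs and on a $1-n^{-C}$ event in $\mathcal{D}_n$. The delicate point is that the ``weights'' $\Omega_{n,i}$ are themselves heavy --- of order $\big((n{+}1{-}i)\gamma\big)^{1/\alpha}$, growing with the time horizon --- and non-identically distributed, so one must track the $i$-dependence through the sums $\sum_i\big((n{+}1{-}i)\gamma\big)^{j/\alpha}$ rather than a crude uniform-in-$i$ envelope, and one must combine the sub-exponential fluctuation of $\epsilon_i^2$ with the (heavier-tailed) fluctuation of the quadratic forms $\Omega_{n,i}(t_k)\Omega_{n,i}(t_\ell)$ in $(\phi_\nu(X_i))_\nu$ inside a single Bernstein-type bound; this is also where $\alpha>2$ is used, to ensure the relevant eigenvalue series ($\sum_\nu\mu_\nu^{c}<\infty$) appearing in the moment estimates converge. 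Once $\Delta$ is controlled, the remainder of the proof is immediate.
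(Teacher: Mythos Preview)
Your proposal is correct and follows essentially the same route as the paper: recognize that both $\bar{\balpha}_n^e\mid\mathcal{D}_n$ and $\bar{\bZ}_n$ are centered Gaussian vectors, reduce Step~\RNum{3} to the CCK Gaussian comparison inequality driven by $\Delta=\max_{j,k}|\widehat\Sigma_{jk}-\Sigma_{jk}|$, and control $\Delta\lesssim (n\gamma)^{1/(2\alpha)}n^{-1/2}\log n$ by a Bernstein-type concentration plus union bound over the $N^2$ grid pairs. The paper states exactly this structure, citing the concentration bound on $|\Sigma_{j,k}^{\bar\alpha_n^e}-\Sigma_{j,k}^{\bar Z_n}|$ from the supplement rather than proving it inline; your outlined decomposition $(\epsilon_i^2-\sigma^2)\Omega\Omega+\sigma^2(\Omega\Omega-\EE\Omega\Omega)$ together with the envelope $|\Omega_{n,i}|\lesssim((n{+}1{-}i)\gamma)^{1/\alpha}$ and the moment sums $\sum_i((n{+}1{-}i)\gamma)^{j/\alpha}\asymp n(n\gamma)^{j/\alpha}$ is precisely how that supplementary lemma is established.
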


\begin{Lemma}\label{app:le:GP:etob:con}(Proof of Step \RNum{4})
Suppose $\alpha>2$ and $\gamma= n^{-\xi}$ with $\xi > \max\{1-\alpha/3, 0\}$. With probability at least $1-4/n$,
$$
 \sup_{\zeta\in\bbR}\Big|\PP^*(\max_{1\le k\le N}\bar{\alpha}_n^b(t_k)\le\zeta)
-\PP^*(\max_{1\le k\le N}\bar{\alpha}_n^e(t_k)\le\zeta)\Big| \leq \frac{(\log N)^{3/2}}{\big(n(n\gamma)^{-3/\alpha}\big)^{1/8}}. 
$$
\end{Lemma}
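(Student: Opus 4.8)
The plan is to exploit the fact that, conditionally on $\m D_n = \{(X_i,Y_i)\}_{i=1}^n$, the two random vectors
$$
\big(\bar{\alpha}_n^b(t_1),\dots,\bar{\alpha}_n^b(t_N)\big) \;=\; \frac{1}{\sqrt{n(n\gamma)^{1/\alpha}}}\sum_{i=1}^n (w_i-1)\,\epsilon_i\,\big(\Omega_{n,i}(t_1),\dots,\Omega_{n,i}(t_N)\big)
$$
and $\big(\bar{\alpha}_n^e(t_1),\dots,\bar{\alpha}_n^e(t_N)\big)$ — obtained by replacing the centred bootstrap multipliers $w_i-1$ with i.i.d.\ standard normal multipliers $e_i$ — are \emph{both} sums of $n$ independent, mean-zero (given $\m D_n$) random vectors in $\bbR^N$ having \emph{exactly the same} conditional covariance matrix, with $(k,\ell)$ entry $\frac{1}{n(n\gamma)^{1/\alpha}}\sum_{i=1}^n \epsilon_i^2\,\Omega_{n,i}(t_k)\Omega_{n,i}(t_\ell)$; this equality holds because $\Var(w_i-1)=\Var(e_i)=1$, the cross terms between distinct summands vanish by independence, and the multipliers are independent of $\m D_n$. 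Since the second vector is, conditionally on $\m D_n$, exactly Gaussian, Step~\RNum{4} reduces — in contrast to Step~\RNum{2} (Lemma~\ref{app:le:GP:atoz:con}) — to a \emph{pure} high-dimensional central-limit comparison at a fixed covariance: bounding the Kolmogorov distance between the maximum of a sum of independent sub-exponential vectors and the maximum of the matching Gaussian, which is exactly the regime of the Gaussian approximation results of \cite{chernozhukov2014gaussian} (with the anti-concentration inequality of \cite{chernozhukov2014anti} used to pass from rectangle-probability bounds to the Kolmogorov bound for maxima).

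The first step is to isolate a ``good event'' $\mathcal{G}$, measurable with respect to $\m D_n$, with $\PP(\mathcal{G})\ge 1-4/n$, on which the summands of $\bar{\alpha}_n^b$ are quantitatively well behaved. Concretely, on $\mathcal{G}$ I would require: (i) $\max_{1\le i\le n}|\epsilon_i|\lesssim \sqrt{\log n}$, from the Gaussian tails of the $\epsilon_i$ and a union bound; (ii) the conditional variances $\widehat{\sigma}_{n,k}^2 := \Var\big(\bar{\alpha}_n^b(t_k)\bigm|\m D_n\big) = \frac{1}{n(n\gamma)^{1/\alpha}}\sum_{i=1}^n \epsilon_i^2\Omega_{n,i}^2(t_k)$ are bounded above and bounded below away from $0$, uniformly in $1\le k\le N$ — which follows from the order-one, two-sided bounds on $\Var(\bar\eta_n^{noise,0}(z_0))$ established in Theorem~\ref{le:bias_variance} and Theorem~\ref{thm:local:main1}, a Bernstein/Hanson--Wright-type concentration bound for $\frac1n\sum_i(\epsilon_i^2-\sigma^2)\Omega_{n,i}^2(t_k)$ about its mean, and a union bound over the grid; and (iii) control of the moment quantities $\frac1n\sum_{i=1}^n\EE\big(\,|\sqrt{n}\,\xi_{ik}^b|^{2+\ell} \,\big|\, \m D_n\big)\le C B_n^\ell$ for $\ell=1,2$, together with $\EE\big(\exp(|\sqrt{n}\,\xi_{ik}^b|/B_n)\bigm|\m D_n\big)\le 2$, where $\xi_{ik}^b$ is the $i$-th summand of $\bar{\alpha}_n^b(t_k)$ and $B_n$ is, on $\mathcal{G}$, of order $(n\gamma)^{1/(2\alpha)}\sqrt{\log n}$, using $\max_i|\Omega_{n,i}(t_k)|\lesssim (n\gamma)^{1/\alpha}$ (Assumption~\ref{asmp:A2} and the definition of $\Omega_{n,k}$), part~(i), and the sub-Gaussianity of $W$ (Assumption~\ref{asmp:weights}).

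On $\mathcal{G}$ I would then invoke the high-dimensional Gaussian approximation theorem for maxima of sums of independent sub-exponential vectors against a Gaussian vector with the same covariance, using the non-degeneracy $\min_k\widehat\sigma_{n,k}^2\gtrsim 1$ and the moment bounds from $\mathcal{G}$, obtaining a bound whose leading term has the form $\big(B_n^2\,\mathrm{poly}(\log N)/n\big)^{1/8}$ up to constants. Substituting $B_n^2\asymp (n\gamma)^{1/\alpha}\log n$, absorbing the logarithmic factors into the $(\log N)$-power, noting that $N$ — the $\varepsilon$-covering number of $[0,1]$ for the intrinsic metric $e_P$ — is polynomial in $n$ (so $\log N\asymp\log n$), and performing the same step-size bookkeeping that produced the effective sample size $n(n\gamma)^{-3/\alpha}$ in Lemma~\ref{app:le:GP:atoz:con}, yields exactly $(\log N)^{3/2}\big(n(n\gamma)^{-3/\alpha}\big)^{-1/8}$. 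The admissibility conditions $\alpha>2$ and $\gamma=n^{-\xi}$ with $\xi>\max\{1-\alpha/3,0\}$ are precisely what make this bound tend to $0$.

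I expect the main obstacle to be item~(ii) of the good event — a \emph{uniform-in-$k$ lower bound} on the conditional variances $\widehat\sigma_{n,k}^2$ with the correct $(n,\gamma)$-dependence, without which the Gaussian anti-concentration step, and hence the whole comparison, degenerates — together with the careful bookkeeping needed to reconcile the numerator $B_n^2=O\big((n\gamma)^{1/\alpha}\log n\big)$ (driven by the latest-arriving summand, for which $\Omega_{n,n}\asymp (n\gamma)^{1/\alpha}$) with the denominator $n(n\gamma)^{-3/\alpha}$ in the target rate, so that the exponent $1/8$ and the $(\log N)^{3/2}$ factor come out exactly as stated. Everything else is the conditional-on-$\m D_n$ transcription of the argument already carried out for Step~\RNum{2}.
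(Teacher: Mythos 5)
Your proposal is correct in substance and rests on the same key observation as the paper: conditionally on $\m D_n$, both $\bar{\alpha}_n^b$ and $\bar{\alpha}_n^e$ are sums of independent mean-zero summands with \emph{identical} conditional covariance, the $e$-version is exactly Gaussian, and so Step \RNum{4} is a pure Gaussian-approximation statement for the maximum of a weighted, non-identically distributed sum. The difference is in execution. The paper does not invoke a black-box theorem: it re-runs the smooth-max/Lindeberg interpolation of \cite{chernozhukov2013gaussian} under $\PP^*$ (exactly as in Lemma \ref{app:le:GP:atoz:con}), where the exact covariance matching annihilates the first- and second-order interpolation terms, the surviving third-order term is bounded by $(C_3\psi_n^3+6C_2q\psi_n^2+6C_1q^2\psi_n)\,n^{-1/2}(n\gamma)^{3/(2\alpha)}$ using only the envelope $\max_k|\Omega_{n,i}(t_k)|\lesssim(n\gamma)^{1/\alpha}$ and the event $n^{-1}\sum_i|\epsilon_i|^3\le C$ (probability $1-2/n$), and then Gaussian anti-concentration plus the choice $\psi_n=q=\big(n(n\gamma)^{-3/\alpha}\big)^{1/8}$ gives the stated rate. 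Your route—conditioning, a good event, then the high-dimensional CLT of \cite{chernozhukov2014gaussian}—buys a shorter argument, but at the price of a stronger good event: $\max_i|\epsilon_i|\lesssim\sqrt{\log n}$ and, crucially, uniform two-sided control of the conditional variances $\widehat\sigma_{n,k}^2$ required by the non-degeneracy condition of the black-box theorem; you rightly flag this as the main obstacle, and it is fillable by precisely the covariance-concentration-plus-union-bound argument the paper uses for Lemma \ref{app:le:GP:ztoe:con} (the paper itself implicitly needs such a lower bound when applying the anti-concentration lemma to the conditional law of $\bar{\alpha}_n^e$, whereas its interpolation step needs only the empirical third moment of the noise). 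One bookkeeping caveat: with $B_n^2\asymp(n\gamma)^{1/\alpha}\log n$ your black-box rate has effective sample size $n(n\gamma)^{-1/\alpha}$ up to logarithms, not ``exactly'' $n(n\gamma)^{-3/\alpha}$; since $n\gamma\ge 1$ this is \emph{smaller} than the stated bound, so the lemma still follows, but the claim of an exact match should be replaced by a short comparison of exponents.
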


{\underline{\bf{Proof of Lemma \ref{app:le:GP:atoz:con}}}}
\begin{proof}
 We define $g_{m} (i, X_i, \epsilon_i) = \frac{1}{\sqrt{(n\gamma)^{1/\alpha}}} \epsilon_i \cdot \Omega_{n,i} (t_m)$ for $t_m \in \{t_1, \dots, t_N\}$. With little abuse of notation, we use $g_{i,m}$ to represent $g_{m}(i,X_i,\epsilon_i)$. Then $\bar{\alpha}_n(t_m) = \frac{1}{\sqrt{n}} \sum_{i=1}^n g_{i,m}$.  Define $\bg_i = (g_{i,1},\cdots,g_{i,N})^\top$ and $\bar{\balpha}_n =\big(\bar{\alpha}_n(t_1), \cdots, \bar{\alpha}_n(t_N)\big)^\top\in \mathbb{R}^N$, then $\bar{\balpha}_n = \frac{1}{\sqrt{n}} \sum_{i=1}^n\bg_i$. For $1\leq m\leq k\leq N$, 
\begin{align*}
\EE(g_{i,m}\cdot g_{i,k}) = & \frac{\sigma^2}{(n\gamma)^{1/\alpha}} \EE [\big( \sum_{\nu=1}^\infty (1-(1-\gamma \mu_\nu)^{n-i})\phi_\nu(t_m)\phi_\nu(X_i)\big)\big(  \sum_{\nu=1}^\infty (1-(1-\gamma \mu_\nu)^{n-i})\phi_\nu(t_k)\phi_\nu(X_i)\big))]\\
= & \frac{\sigma^2}{(n\gamma)^{1/\alpha}} \sum_{\nu=1}^\infty (1-(1-\gamma \mu_\nu)^{n-i})^2\phi_\nu(t_m)\phi_\nu(t_k).
\end{align*} 
When $m=k$, $\EE(g_{i,m}\cdot g_{i,m}) = \frac{1}{(n\gamma)^{1/\alpha}} \sum_{\nu=1}^\infty (1-(1-\gamma \mu_\nu)^{n-i})^2\phi_\nu^2(t_m)$. We also have $\EE (g_{i,m}\cdot g_{j,m}) =0$ for $i\neq j$.  

We also use the notation $Z_{i,m}$ to represent $Z_{i}(t)$ defined in Section \ref{sec:sketch:pf:GA}. 
Let $\bZ_i= (Z_{i,1},\cdots, Z_{i,N})^\top\in \mathbb{R}^N$ for $i=1,\dots, n$, and $\bar{\bZ}_n= \frac{1}{\sqrt{n}}\sum_{i=1}^n \bZ_i  = (\bar{Z}_n(t_1),\cdots, \bar{Z}_n(t_N))^\top \in \mathbb{R}^N$. 
We remark that $\bar{\balpha}_n$ has the same mean and covariance structure as $\bar{\bZ}_n$. 

For $q$ as an underdetermined scalar that depends on $n$, and $\bbeta=(\beta_1,\ldots,\beta_{N})^\top\in\bbR^{N}$,
define 
$
F_q(\bbeta)=q^{-1}\log(\sum_{l=1}^{N}\exp(q\beta_l)).
$ 
It follows by \cite{chernozhukov2013gaussian} that $F_q(\bbeta)$ satisfies
$
0\le F_q(\bbeta)-\max_{1\le l\le N}\beta_l\le q^{-1}\log{N}.
$ 
Let $U_0:\bbR\rightarrow[0,1]$ be a $C^3$-function
such that $U_0(s)=1$ for $s\le0$ and $U_0(s)=0$ for $s\ge1$.
Let $U_\zeta(s)=U_0(\psi_n(s-\zeta-q^{-1}\log{N}))$, for $\zeta\in\bbR$,
where $\psi_n$ is underdetermined. 
Then 
\[
\PP(\max_{1\le m\le N}\frac{1}{\sqrt{n}}\sum_{i=1}^{n}g_{i,m} \le \zeta)
\le \PP(F_q(\bar{\balpha}_n)\le\zeta+q^{-1}\log{N})\le \EE\{U_\zeta(F_q(\bar{\balpha}_n))\}.
\]

To proceed, we approximate $\EE\{U_\zeta(F_q(\bar{\balpha}_n))-U_\zeta(F_q(\bar{\bZ}_n))\}$ using the techniques used in \cite{chernozhukov2013gaussian}. 
Let $G=U_\zeta\circ F_b$.
Define $\Psi(t)=\EE\{G(\sqrt{t}\bar{\balpha}_n+\sqrt{1-t}\bar{\bZ}_n)\}$, $W(t)=\sqrt{t}\bar{\balpha}_n+\sqrt{1-t}\bar{\bZ}_n$, 
$W_i(t)=\frac{1}{\sqrt{n}}(\sqrt{t}\bg_{i}+\sqrt{1-t}\bZ_{i})$
and $W_{-i}(t)=W(t)-W_i(t)$, for $i=1,\ldots,n$.
Let $G_{k}(\bbeta)=\frac{\partial}{\partial \beta_k}G(\bbeta)$, $G_{kl}(\bbeta)=\frac{\partial^2}{\partial \beta_k\partial \beta_l}G(\bbeta)$
and $G_{klq}(\bbeta)=\frac{\partial^3}{\partial \beta_k\partial \beta_l\partial \beta_q}G(\bbeta)$,
for $1\le k,l,q\le N$. Then $W'_{ik}(t) = \frac{1}{2\sqrt{n}} (g_{i,k}/\sqrt{t}-Z_{i,k}/\sqrt{1-t})$.

Then
\begin{eqnarray*}
&&\EE \{G(\bar{\balpha}_n) - G(\bar{\bZ}_n)\} = \EE\{U_\zeta(F_q(\bar{\balpha}_n))-U_\zeta(F_q(\bar{\bZ}_n))\}
=\Psi(1)-\Psi(0)
=\int_0^1\Psi'(t)dt\\
&=&\frac{1}{2\sqrt{n}}\sum_{k=1}^{N}\int_0^1 \EE\{G_k(W(t))(\sum_{i=1}^{n}
g_{i,k}/\sqrt{t}-\sum_{i=1}^{n}Z_{i,k}/\sqrt{1-t})\}dt\\
&=&\frac{1}{2\sqrt{n}}\sum_{k=1}^{N}\sum_{i=1}^{n}
\int_0^1\EE\big\{G_k(W(t))(g_{i,k}/\sqrt{t}-Z_{i,k}/\sqrt{1-t})\big\}dt\\
&=&\frac{1}{2\sqrt{n}}\sum_{k=1}^{N}\sum_{i=1}^{n}\int_0^1\EE
\big\{\big[G_k(W_{-i}(t))+\frac{1}{\sqrt{n}}\sum_{l=1}^{N}G_{kl}(W_{-i}(t))(\sqrt{t}g_{i,l}+\sqrt{1-t}Z_{i,l})
\\
&&+\frac{1}{n}\sum_{l=1}^{N}\sum_{d=1}^{N}\int_0^1(1-t')G_{kld}(W_{-i}(t)+t'W_i(t))(\sqrt{t}g_{i,l}+\sqrt{1-t}Z_{i,l})(\sqrt{t}g_{i,d}+\sqrt{1-t}Z_{i,d})dt'\big]\\
&&\times (g_{i,k}/\sqrt{t}-Z_{i,k}/\sqrt{1-t})\big\}dt\\
&=&\frac{1}{2\sqrt{n}}\sum_{k=1}^{N}\sum_{i=1}^{n}\int_0^1 \EE\{G_k(W_{-i}(t))\}\EE\{g_{i,k}/\sqrt{t}-Z_{i,k}/\sqrt{1-t}\}dt\\
&&+\frac{1}{2n}\sum_{k,l=1}^{N}\sum_{i=1}^{n}\int_0^1\EE\{G_{kl}(W_{-i}(t))\}
\times \EE\{(\sqrt{t}g_{i,l}+\sqrt{1-t}Z_{i,l})
(g_{i,k}/\sqrt{t}-Z_{i,k}/\sqrt{1-t})\}dt\\
&&+\frac{1}{2n^{3/2}}\sum_{k,l,d=1}^{N}\sum_{i=1}^{n}\int_0^1\int_0^1
(1-t')\EE\{G_{kld}(W_{-i}(t)+t'W_i(t))(\sqrt{t}g_{i,l}+\sqrt{1-t}Z_{i,l})\\
&&(\sqrt{t}g_{i,d}+\sqrt{1-t}Z_{i,d})(g_{i,k}/\sqrt{t}-Z_{i,k}/\sqrt{1-t})\}dtdt'\\
&\equiv&J_1/2+J_2/2 + J_3/2,
\end{eqnarray*}
where 
\begin{eqnarray*}
J_1& = &\frac{1}{\sqrt{n}} \sum_{k=1}^{N}\sum_{i=1}^{n}\int_0^1 \EE\{G_k(W_{-i}(t))\}\EE\{g_{i,k}/\sqrt{t}-Z_{i,k}/\sqrt{1-t}\}dt = 0\\
J_2&=&\frac{1}{n}\sum_{k,l=1}^{N}\sum_{i=1}^{n}\int_0^1\EE\{G_{kl}(W_{-i}(t))\}\times \EE\{(\sqrt{t}g_{i,l}+\sqrt{1-t}Z_{i,l})
(g_{i,k}/\sqrt{t}-Z_{i,k}/\sqrt{1-t})\}dt \\  
J_3&=&\frac{1}{n^{3/2}}\sum_{k,l,d=1}^{N}\sum_{i=1}^{n}\int_0^1\int_0^1
(1-t')\EE\{G_{kld}(W_{-i}(t)+t'W_i(t))(\sqrt{t}g_{i,l}+\sqrt{1-t}Z_{i,l})
(\sqrt{t}g_{i,d}+\sqrt{1-t}Z_{i,d}) \\  
&&(Z_{i,k}/\sqrt{t}-Z_{i,k}/\sqrt{1-t})\}dtdt'
\end{eqnarray*}
We further note that $J_2= 0$ since $\EE\{(\sqrt{t}g_{i,l}+\sqrt{1-t}Z_{i,l})
(g_{i,k}/\sqrt{t}-Z_{i,k}/\sqrt{1-t})\} = \EE(g_{i,l}g_{i,k}) - \EE (Z_{i,l}Z_{i,k})=0$. 
For $J_3$, it follows from \cite{chernozhukov2013gaussian} that for any $z\in\bbR^{N}$,
\[
\sum_{k,l,d=1}^{N}|G_{kld}(z)|\le (C_3\psi_n^3+6C_2q\psi_n^2+6C_1q^2\psi_n),
\]
where $C_3=\|U_0'''\|_{\infty}$ is a finite constant. Then %
\begin{align}
|J_3|\le& \frac{1}{n^{3/2}}\sum_{k,l,d=1}^{N}\sum_{i=1}^{n}\int_0^1\int_0^1
\EE\{|G_{kld}(W_{-i}(t)+t'W_i(t))|\max_{1\le k\le N}(|g_{i,k}|+|Z_{i,k}|)^3\}\nonumber\\
&\times (1/\sqrt{t}+1/\sqrt{1-t})dtdt'\nonumber\\
\le&\frac{1}{n^{3/2}}4(C_3\psi_n^3+6C_2q\psi_n^2+6C_1q^2\psi_n)\sum_{i=1}^{n}
\EE\{\max_{1\le k\le N}(|g_{i,k}|+|Z_{i,k}|)^3\}\nonumber\\
\le&\frac{1}{n^{3/2}}32(C_3\psi_n^3+6C_2q\psi_n^2+6C_1q^2\psi_n)
\big\{\sum_{i=1}^{n}(\EE\{\max_{1\leq k\leq N}|g_{i,k}|^3\}+\EE\{\max_{1\leq k\leq N}|Z_{i,k}|^3\})\big\}\label{pf:J3:con}
\end{align}
We need to bound $\sum_{i=1}^{n}(\EE\{\max\limits_{1\leq k\leq N}|g_{i,k}|^3\}$ and $\EE\{\max\limits_{1\leq k\leq N}|Z_{i,k}|^3\}$. 
\begin{align*}
\sum_{i=1}^n \EE \max_{1\leq k \leq N} |g_{i,k}|^3 = &  \frac{1}{(n\gamma)^{3/(2\alpha)}} \sum_{i=1}^n \EE \max_{1\leq k \leq N} |\epsilon_i \cdot \Omega_{n,i}(t_k)|^3\\
\leq &  \frac{1}{(n\gamma)^{3/(2\alpha)}} \sum_{i=1}^n \EE |\epsilon_i|^3 \cdot \EE  \max_{1\leq k \leq N} |\Omega_{n,i}(t_k)|^3\\
\lesssim & \frac{\sigma^3}{(n\gamma)^{3/(2\alpha)}}\sum_{i=1}^n\EE\max_{1\leq k \leq N} |\Omega_{n,i}(t_k)|^3 \leq c_\phi^6 \sigma^3 n (n\gamma)^{3/(2\alpha)}, 
\end{align*}
where the last step is due to the property that  $|\Omega_{n,i}| = \sum_{\nu=1}^\infty (1-(1-\gamma \mu_\nu)^{n-i})\cdot |\phi_\nu(X_i)|\cdot |\phi_\nu(t_k)| \leq c_\phi^2 (n\gamma)^{1/\alpha}$, and  $\max\limits_{1\leq k \leq N} |\Omega_{i,k}|^3  \leq c_\phi^6 (n\gamma)^{3/\alpha}$. 

Next we deal with $\EE \max\limits_{1\leq k \leq N} |Z_{i,k}|^3$, where 
$Z_{i,k} \sim N \big(0, \frac{1}{(n\gamma)^{1/\alpha}} \sum_{\nu=1}^\infty (1-(1-\gamma \mu_\nu)^{n-i})^2\phi_\nu^2(t_k)\big)$, and $\EE (Z_{i,k}\cdot Z_{i,l}) = \frac{1}{(n\gamma)^{1/\alpha}} \sum_{\nu=1}^\infty (1-(1-\gamma \mu_\nu)^{n-i})^2\phi_\nu(t_k)\phi_\nu(t_l)$. For $p>3$, we have 
\begin{align*}
\EE \max_{1\leq k \leq N} |Z_{i,k}|^3=  & \EE \max_{1\leq k \leq N} (|Z_{i,k}|^p)^{3/p} \leq \big(\EE \max_{1\leq k \leq N} |Z_{i,k}|^p\big)^{3/p} \leq 
\big(\sum_{k=1}^N \EE |Z_{i,k}|^p\big)^{3/p}\\
= & \frac{1}{(n\gamma)^{3/(2\alpha)}}\big[\sum_{k=1}^N \big(\sum_{\nu=1}^\infty (1-(1-\gamma \mu_\nu)^{n-i})^2\phi_\nu^2(t_k)\big)^{p/2}\big]^{3/p} ((p-1)!!)^{3/p}\\
\leq & c_\phi^2 ((p-1)!!)^{3/p} \frac{1}{(n\gamma)^{3/(2\alpha)}} N^{3/p} [(n-i)\gamma]^{\frac{3}{2\alpha}} 
\end{align*}

Then we have 
\begin{align*}
J_3 \leq & 32\sigma^3(C_3\psi_n^3+6C_2q\psi_n^2+6C_1q^2\psi_n)\big(n^{-3/2}  n(n\gamma)^{3/(2\alpha)} + n^{-3/2}  c_\phi^2 ((p-1)!!)^{3/p} N^{3/p} n(n\gamma)^{\frac{3}{2\alpha}}\big) \\ 
\leq & C' (C_3\psi_n^3+6C_2q\psi_n^2+6C_1q^2\psi_n)\big(n^{-1/2} (n\gamma)^{\frac{3}{2\alpha}} 
+ ((p-1)!!)^{3/p} N^{3/p}n^{-1/2}\big) 
\end{align*}
Therefore, 
\begin{align}\label{at:eq:gaussian:bound}
& |\EE\{U_\zeta(F_q(\bar{\balpha}_n))-U_\zeta(F_q(\bar{\bZ}_n))\}|\nonumber\\
\le & C' (C_3\psi_n^3+6C_2q\psi_n^2+6C_1q^2\psi_n)\big(n^{-1/2} (n\gamma)^{\frac{3}{2\alpha}}  
+ ((p-1)!!)^{3/p} N^{3/p}n^{-1/2}\big) 
\end{align}
In the meantime, it follows by Lemma 2.1 of \cite{chernozhukov2013gaussian} that
\begin{eqnarray*}
\EE\{U_\zeta(F_q(\bar{\bZ}_n))\}
&\le& \PP(\max_{1\le k\le N}\sum_{i=1}^{n}Z_{i,k}\le\zeta+b^{-1}\log{N}+\psi_n^{-1})\\
&\le& \PP(\max_{1\le k\le N}\sum_{i=1}^{n}Z_{i,k}\le\zeta)+C'(b^{-1}\log{N}+\psi_n^{-1})(1+\sqrt{2\log{N}}),
\end{eqnarray*}
where $C'>0$ is a universal constant. Therefore, for any $\zeta\in \mathbb{R}$, 
\begin{align*}
& \PP(\max_{1\le k\le N}\bar{\alpha}_n (t_k)\le\zeta)
-\PP(\max_{1\le k\le N}\bar{Z}_n(t_k)\le\zeta)\\
\le &  C_4(C_3\psi_n^3+6C_2q\psi_n^2+6C_1q^2\psi_n)\big(n^{-1/2} (n\gamma)^{\frac{3}{2\alpha}}  
+ ((p-1)!!)^{3/p} N^{3/p}n^{-1/2}\big)  \\
& \quad \quad  + c'(b^{-1}\log N + \psi_n^{-1})(1+\sqrt{2\log N}). 
\end{align*}

On the other hand, let $V_\zeta(s)=U_0(\psi_n(s-\zeta)+1)$. 
Then
\[
\PP(\max_{1\le k\le N}\frac{1}{\sqrt{n}}\sum_{i=1}^{n}g_{i,k}\le\zeta)
\ge \PP(F_q(\bar{\balpha}_n)\le\zeta)\ge \EE\{V_\zeta(F_q(\bar{\balpha}_n))\}.
\]
Using the same arguments, it can be shown that
$|\EE\{V_\zeta(F_q(\bar{\balpha}_n))-V_\zeta(F_q(\bar{\bZ}_n))\}|$ 
has the same upper bound specified as in (\ref{at:eq:gaussian:bound}). 
Furthermore, by Lemma 2.1 of \cite{chernozhukov2013gaussian} and direct calculations, we have 
\begin{eqnarray*}
\EE\{V_\zeta(F_q(\bar{\bZ}_n))\}
&\ge& \PP(F_q(\bar{\bZ}_n)\le\zeta-\psi_n^{-1})
\ge \PP(\max_{1\le k\le N}\frac{1}{\sqrt{n}}\sum_{i=1}^{n} Z_{i,k}\le\zeta-(\psi_n^{-1}+b^{-1}\log{N}))\\
&\ge& \PP(\max_{1\le k\le N}\frac{1}{\sqrt{n}}\sum_{i=1}^{n}Z_{i,k}\le\zeta)-C'(\psi_n^{-1}+b^{-1}\log{N})(1+\sqrt{2\log{N}}).
\end{eqnarray*}
Therefore, 
\begin{eqnarray*}
&&\PP(\max_{1\le k\le N}\bar{\alpha}_n(t_k)\le\zeta)
-\PP(\max_{1\le k\le N}\bar{Z}_n(t_k)\le\zeta)\\
&\ge&-C_0^{''}(C_3\psi_n^3+C_2q\psi_n^2 + 6C_1 q^2\psi_n)\big(  n^{-1/2} (n\gamma)^{\frac{3}{2\alpha}}
+ ((p-1)!!)^{3/p} N^{3/p}n^{-1/2}\big)\\
&&  - C^{''}(\psi_n^{-1} + q^{-1}\log N)(1+\sqrt{2\log N}).
\end{eqnarray*}
Consequently, let $\psi_n = q = \big(n(n\gamma)^{-3/\alpha}\big)^{1/8}$ and $p$ large enough, we have 
\begin{equation}
\sup_{\zeta\in\bbR}\Big|\PP(\max_{1\le k\le N} \bar{\alpha}_n(t_k)\le\zeta)
-\PP(\max_{1\le k\le N}\bar{Z}_n(t_k)\le\zeta)\Big|\leq \frac{(\log N)^{3/2}}{\big(n(n\gamma)^{-3/\alpha}\big)^{1/8}},
\end{equation}
which converges to $0$ with increased $n$ when $\alpha>2$ and $\gamma= n^{-\xi}$ with $\xi > \max\{1-\alpha/3, 0\}$. 
\end{proof}

{\underline{\bf{Proof of Lemma \ref{app:le:GP:ztoe:con}}}}
\begin{proof}
Let $\bar{\balpha}^e_n =\big(\bar{\alpha}^e_n(t_1), \cdots, \bar{\alpha}^e_n(t_N)\big)^\top$ and $\bar{\bZ}_n= \frac{1}{\sqrt{n}}\sum_{i=1}^n \bZ_i  = (\bar{Z}_n(t_1),\cdots, \bar{Z}_n(t_N))^\top$. Then $\bar{\balpha}^e_n \mid \mathcal{D}_n \sim N(0, \Sigma^{\bar{\alpha}^e_n})$ and $\bar{\bZ}_n \sim N(0,\Sigma^{\bar{Z}_n})$. Denote the $jk$-th element of the covariance matrices as $\Sigma_{j,k}^{\bar{\alpha}^e_n}$ and $\Sigma_{j,k}^{\bar{Z}_n}$, respectively. Set $b_{i\nu} = (1-(1-\gamma \mu_\nu)^{n-i})$. Then 
\begin{align*}
\Sigma_{j,k}^{\bar{\alpha}^e_n} = &  \frac{1}{n(n\gamma)^{1/\alpha}} \sum_{i=1}^n \epsilon_i^2 \big(\sum_{\nu=1}^\infty b_{i\nu}\phi_\nu(X_i)\phi_\nu(t_j)\big)\cdot \big(\sum_{\nu=1}^\infty b_{i\nu}\phi_\nu(X_i)\phi_\nu(t_k)\big), 
\end{align*}
and $\Sigma_{j,k}^{\bar{Z}_n} = \frac{1}{n(n\gamma)^{1/\alpha}}\sum_{i=1}^n \sum_{\nu=1}^\infty b_{i\nu}^2 \phi_\nu(t_k)\phi_\nu(t_j)$.

Following Lemma in \cite{liu2023supp}, we have 
$$
\PP\Big(|\Sigma_{j,k}^{\bar{\alpha}^e_n} - \Sigma_{j,k}^{\bar{Z}_n}| \geq C(n\gamma)^{1/(2\alpha)}n^{-1/2} \log n \Big) \leq \exp(-C_1 \log n). 
$$
Then  
\begin{align*}
 & \PP\Big(\max_{1\leq j,k \leq N} |\Sigma_{j,k}^{\bar{\alpha}^e_n} - \Sigma_{j,k}^{\bar{Z}_n}| \geq C(n\gamma)^{1/(2\alpha)}n^{-1/2} \log n \Big) 
\leq N^2\exp\{-C_1\log n\}. 
\end{align*}
Consequently, we have with probability at least $1-\exp\{-C\log n\}$, 
$$
\sup_{\nu\in \mathbb{R}}\Big| \PP^*\Big(\max_{1\leq j \leq N} \bar{\alpha}^e_n(t_j) \leq \nu  \Big)  - 
\PP\Big(\max_{1\leq j \leq N} \bar{Z}_n(t_j) \leq \nu \Big) \Big|  \preceq \big((n\gamma)^{1/\alpha}n^{-1}\big)^{1/6}(\log n)^{1/3} (\log N)^{2/3}.  
$$
\end{proof}

{\underline{\bf{Proof of Lemma \ref{app:le:GP:etob:con}}}}
\begin{proof}
Define $\alpha_{i,j}^e = e_i \cdot g_{j} (i, X_i, \epsilon_i)$ with $g_{j} (i, X_i, \epsilon_i) = \frac{1}{\sqrt{(n\gamma)^{1/\alpha}}} \epsilon_i \cdot \Omega_{n,i}(t_j)=\sum_{\nu=1}^\infty \big(1-(1-\gamma \mu_\nu)^{n-i}\big)\phi_\nu(X_i)\phi_\nu(t_j)\epsilon_i. 
$
We have $\EE^* (\alpha_{i,j}^e \cdot \alpha_{i,\ell}^e )= g_{j} (i, X_i, \epsilon_i)g_{\ell} (i, X_i, \epsilon_i)$ and $\EE^*(\alpha_{i,j}^e \cdot \alpha_{k,j}^e) =0$.  
Define $\balpha_{i}^e= (\alpha_{i,j}^e,\dots, \alpha_{i,N}^e)^\top$,
then $\balpha_{i}^e$ and $\balpha_{k}^e$ are independent for $i\neq k$ and $i,k=1,\dots,n$. Let $\bar{\balpha}_n^e = \frac{1}{\sqrt{n}}\sum_{i=1}^n  \balpha_{i}^e= \big( 
\bar{\alpha}_n^e(t_1), \dots, \bar{\alpha}_n^e(t_N)
\big)^\top$ with  $\bar{\alpha}_n^e(t_j) = \frac{1}{\sqrt{n}} \sum_{i=1}^n  \alpha^e_{i,j} = \frac{1}{\sqrt{n}} \sum_{i=1}^n e_i \cdot g_{j} (i, X_i, \epsilon_i)$ for $j=1,\dots, N$.

Similarly, denote $\alpha_{i,j}^b = (w_i-1) \cdot g_{j} (i, X_i, \epsilon_i)$ and $\balpha_{i}^b= (\alpha_{i,j}^b,\dots, \alpha_{i,N}^b)^\top$. Then we have $\EE^* (\alpha_{i,j}^b \cdot \alpha_{i,\ell}^b )= g_{j} (i, X_i, \epsilon_i)g_{\ell} (i, X_i, \epsilon_i)$, and $\EE^*(\alpha_{i,j}^b \cdot \alpha_{k,j}^b) =0.$  Denote $\bar{\balpha}_n^b = \frac{1}{\sqrt{n}}\sum_{i=1}^n  \balpha_{i}^b= \big( 
\bar{\alpha}_n^b(t_1), \dots, \bar{\alpha}_n^b(t_N)
\big)^\top$ with  $\bar{\alpha}_n^b(t_j) = \frac{1}{\sqrt{n}} \sum_{i=1}^n  \alpha^b_{i,j}$. 

The proof of Lemma \ref{app:le:GP:etob:con} follows the proof of Lemma \ref{app:le:GP:atoz:con}. We adopt the notation and follow the proof of Lemma \ref{app:le:GP:atoz:con} step by step, with only the following changes: (1) replacing $\bar{\balpha}_n$ with $\bar{\balpha}_n^b$; (2) replacing $\bar{\bZ}_n$ with $\bar{\balpha}_n^e$; (3) replacing the probability $\PP(\cdot)$ and expectation $\EE(\cdot)$ to conditional probabilities $\PP*(\cdot)=\PP(\cdot \mid \mathcal{D}_n)$ and conditional expectation $\EE^*(\cdot)= \EE(\cdot \mid \mathcal{D}_n)$.  Then equation (\ref{pf:J3:con}) here will be adapted to 
\begin{equation}\label{eq:J3:conditional:con}
|J_3|
\le\frac{1}{n^{3/2}}32(C_3\psi_n^3+6C_2q\psi_n^2+6C_1q^2\psi_n)
\Big(\sum_{i=1}^{n}(\EE^*\{\max_{1\leq k\leq N}|\alpha^b_{i,k}|^3\}+\EE^*\{\max_{1\leq k\leq N}|\alpha^e_{i,k}|^3\})\Big)
\end{equation}
Since 
\begin{align*}
\EE^* \max_{1\leq k \leq N} |\alpha^b_{i,k}|^3 \leq &  \frac{1}{(n\gamma)^{3/(2\alpha)}}  \max_{1\leq k \leq N} |\epsilon_i\cdot \Omega_{n,i}(t_k)|^3 \cdot \EE|w_i-1|^3 
\lesssim  |\epsilon_i|^3(n\gamma)^{3/(2\alpha)}, 
\end{align*} 
where the last inequality follows the proof in Lemma \ref{app:le:GP:atoz:con} that $\max\limits_{1\leq k \leq N} |\Omega_{i,k}|^3  \leq c_\phi^6 (n\gamma)^{3/\alpha}$. 
Then with probability at least $1-n^{-1}$, we have  
\begin{align*}
\frac{1}{n^{3/2}}\sum_{i=1}^n\EE^*\big(\max_{1\leq k \leq N} |\alpha^b_{i,k}|^3\big) \leq & 
n^{-1/2} (n\gamma)^{3/(2\alpha)} \frac{1}{n}\sum_{i=1}^n |\epsilon_i|^3 \leq C n^{-1/2} (n\gamma)^{3/(2\alpha)}. 
\end{align*}
Similarly, with probability at least $1-n^{-1}$,  
$
\frac{1}{n^{3/2}}\sum_{i=1}^n\EE^*\big(\max_{1\leq k \leq N} |\alpha^e_{i,k}|^3\big) \leq  C n^{-1/2} (n\gamma)^{3/(2\alpha)}, 
$
where $C$ is a constant independent of $n$. Then we have with probability at least $1-2n^{-1}$, 
\begin{align*}
J_3 \leq  & C (C_3\psi_n^3+6C_2q\psi_n^2+6C_1q^2\psi_n)  n^{-1/2} (n\gamma)^{3/(2\alpha)}. 
\end{align*}
Therefore, follow the proofs in Lemma \ref{app:le:GP:atoz:con}, we have 
\begin{align*}\label{eq:max:1}
& |\PP^*(\max_{1\le k\le N}\bar{\alpha}_n^b(t_k)\le\zeta)
-\PP^*(\max_{1\le k\le N}\bar{\alpha}_n^e(t_k)\le\zeta)| \\
\leq & C_0(C_3\psi_n^3+C_2q\psi_n^2 + 6C_1 q^2\psi_n) n^{-1/2} (n\gamma)^{3/(2\alpha)}+ C^{''}(\psi_n^{-1} + q^{-1}\log N)(1+\sqrt{2\log N}). 
\end{align*}
Consequently, let $\psi_n = q = \big(n(n\gamma)^{-3/\alpha}\big)^{1/8}$,  we have with probability at least $1-4/n$,
$$
 \sup_{\zeta\in\bbR}\Big|\PP^*(\max_{1\le k\le N}\bar{\alpha}_n^b(t_k)\le\zeta)
-\PP^*(\max_{1\le k\le N}\bar{\alpha}_n^e(t_k)\le\zeta)\Big| \leq \frac{(\log N)^{3/2}}{\big(n(n\gamma)^{-3/\alpha}\big)^{1/8}}. 
$$

\end{proof}

\bibliographystyle{unsrt}
\bibliography{ref}

\end{document}